\numberwithin{equation}{section}
\newtheorem{theorem}{Theorem}[section]
\newtheorem{lemma}{Lemma}[section]
\newtheorem{proposition}{Proposition}[section]
\newtheorem{definition}{Definition}[section]
\newtheorem{remark}{Remark}[section]
\def\R{{\mathbb R}}
\def\D{{\mathbb D}}
\newcommand{\T}{\mathbb{T}}
\def\supp{\mathop{\rm supp}\nolimits}
\numberwithin{equation}{section}
\begin{document}

\title[Unconditional uniqueness for mKdV]{Unconditional uniqueness for the modified Korteweg-de Vries equation on the line }

\subjclass[2010]{Primary: 35A02, 35E15, 35Q53; Secondary: 35B45, 35D30 }
\keywords{Modified Korteweg- de Vries equation, Unconditional uniqueness, Well-posedness, Modified energy}

\author[L. Molinet, D. Pilod and S. Vento]{Luc Molinet$^*$, Didier Pilod$^\dagger$  and St\'ephane Vento$^*$}

\thanks{$^*$ Partially supported by the french ANR project GEODISP}
\thanks{$^{\dagger}$ Partially supported by CNPq/Brazil, grants 302632/2013-1 and 481715/2012-6.}

\address{Luc Molinet, L.M.P.T., CNRS UMR 7350, F\'ed\'eration Denis Poisson-CNRS, Universit\'e Fran\c cois Rabelais, Tours, Parc Grandmont, 37200 Tours, France.}
\email{Luc.Molinet@lmpt.univ-tours.fr}

\address{Didier Pilod, Instituto de Matem\'atica, Universidade Federal do Rio de Janeiro, Caixa Postal 68530, CEP: 21945-970, Rio de Janeiro, RJ, Brasil.}
\email{didier@im.ufrj.br}

\address{St\'ephane Vento, Universit\'e Paris 13, Sorbonne Paris Cit\'e, LAGA, CNRS ( UMR 7539),  99, avenue Jean-Baptiste Cl\'ement, F-93 430 Villetaneuse, France.}
\email{vento@math.univ-paris13.fr}

\date{\today}

\vspace{-0.5cm}

\begin{abstract}
We prove that the modified Korteweg- de Vries equation (mKdV) equation is unconditionally well-posed in $H^s(\mathbb R)$ for $s> \frac 13$. Our
 method of proof combines the improvement of the energy method introduced recently by the first and third authors with the construction of a modified energy. Our approach
  also yields \textit{a priori} estimates for the solutions of mKdV in $H^s(\mathbb R)$, for $s>0$, and enables us to construct weak solutions at this level of regularity.

\end{abstract}

\maketitle

\section{Introduction}

We consider the initial value problem (IVP) associated to the modified Korteweg-de Vries (mKdV) equation
\begin{equation} \label{mKdV}
\left\{ \begin{array}{l}\partial_tu+\partial_x^3u+\kappa\partial_x(u^3)=0 \, , \\
u(\cdot,0)=u_0 \, , \end{array} \right.
\end{equation}
where  $u=u(x,t)$ is a real function, $\kappa=1$ or $-1$, $x \in \mathbb R$, $t \in \mathbb R$.

In the seminal paper \cite{KPV2}, Kenig, Ponce and Vega proved the well-posedness of \eqref{mKdV} in $ H^s(\R) $ for $ s\ge 1/4 $.
This result is sharp on the $ H^s$-scale in the sense that the flow map associated to mKdV fails to be uniformly continuous in $H^s(\mathbb R)$ if $s<\frac14$ in both the focusing case $\kappa=1$ (cf. Kenig, Ponce and Vega \cite{KPV3}) and the defocusing case  $\kappa=-1$ (cf. Christ, Colliander and Tao \cite{ChCoTao}).
Global well-posedness (GWP) for mKdV was proved in $H^s(\mathbb R)$ for $s>\frac14$ by Colliander, Keel, Staffilani, Takaoka and Tao \cite{CKSTT} by using the $I$-method (see also \cite{Guo,Kish}  for the GWP at the end point $s=1/4$). We also mention that another proof of the local well-posedness result for $s \ge \frac14$ was given by Tao by using the Fourier restriction norm method \cite{Tao}. On the other hand, if one exits the $ H^s $-scale, Gr\"unrock \cite{G} and then Gr\"unrock-Vega \cite{GV} proved that the Cauchy problem is well-posed in $ \widehat{H^r_s} $ for  $1<r<2$ and $ s\ge \frac{1}{2}-\frac{1}{2r} $ where $ \|u_0\|_{\widehat{H^r_s}} :=\|\langle \xi \rangle^s \widehat{u_0} \|_{L^{r'}_\xi} $
with $ \frac{1}{r'}+\frac{1}{r}=1 $. Note that  $ \widehat{H^1_0} $ is critical for scaling considerations and thus the result in \cite{GV} is nearly optimal  on this scale whereas the index $ 1/4 $ in the $ H^s $-scale is far above the critical index which is $ -1/2$.

\medskip

The proof of the well-posedness result in \cite{KPV2} relies on the dispersive estimates associated with the linear group of \eqref{mKdV}, namely the Strichartz estimates, the local smoothing effect and the maximal function estimate. A normed function space is constructed based on those estimates and allows to solve \eqref{mKdV} via a fixed point theorem on the associated integral equation. Of course the solutions obtained in this way are unique in this
 resolution  space.  The same occurs for the solutions constructed by Tao which are unique in the space $X_T^{s,\frac12+} $.

The question to know whether uniqueness holds for solutions which do not belong to these resolution spaces turns out to be far from trivial at this level of regularity. This kind of question was first raised by Kato \cite{Ka} in the Schr\"odinger equation context. We refer to such uniqueness in $C([0, T] : H^s(\mathbb R))$, or more generally in $ L^\infty(]0,T[ : H^{s}(\mathbb R))$, without intersecting with any auxiliary function space as \textit{unconditional uniqueness}.  This ensures  the uniqueness of the weak solutions to  the equation at the $ H^s$-regularity. This is useful, for instance, to pass to the limit on perturbations of the equation  as the perturbative coefficient tends to zero (see for instance \cite{M} for such an application).

Unconditional uniqueness was proved for the KdV equation to hold in $L^2(\mathbb R)$ \cite{Zhou} and in $L^2(\mathbb T)$ \cite{BaIlTi} and for the mKdV in $H^{\frac12}(\mathbb T)$ \cite{KwonOh}.

\medskip
The aim of this paper is to propose a  strategy to show the unconditional uniqueness for some dispersive PDEs and, in particular, to prove the unconditional uniqueness of  the mKdV equation in $H^s(\mathbb R)$ for $s > \frac13 $. Note that, doing so, we also provide a different proof of the existence result.  Before stating our main result, we give a precise definition of our notion of solution.
\begin{definition}\label{def} Let $T>0$ . We will say that $u\in L^3(]0,T[\times \R) $ is a solution to \eqref{mKdV} associated with the initial datum $ u_0 \in H^s(\R)$  if
  $ u $ satisfies \eqref{mKdV}   in the distributional sense, i.e. for any test function $ \phi\in C_c^\infty(]-T,T[\times \R) $,  there holds
  \begin{equation}\label{weakmKdV}
  \int_0^\infty \int_{\R} \Bigl[(\phi_t +\partial_x^3 \phi )u +  \phi_x u^3\Bigr] \, dx \, dt +\int_{\R} \phi(0,\cdot) u_0 \, dx =0\; .
  \end{equation}
 \end{definition}
 \begin{remark} \label{rem2} Note that $ L^\infty(]0,T[ \, : H^s(\R)) \hookrightarrow  L^3(]0,T[\times \R) $ as soon as $s \ge 1/6$.  Moreover, for $u\in  L^\infty(]0,T[ \, : H^s(\R))$,  with $ s\ge \frac{1}{6} $, $ u^3 $ is well-defined and
  belongs to $ L^\infty(]0,T[ : L^1(\R))$.   Therefore \eqref{weakmKdV} forces $ u_t \in L^\infty(]0,T[ \, : H^{-3}(\R)) $  and  ensures that  \eqref{mKdV}
   is satisfied in $ L^\infty(]0,T[ \, : H^{-3}(\R)) $.
  In particular, $ u\in C([0,T] : H^{-3}(\R))$ and \eqref{weakmKdV}  forces  the initial condition $ u(0)=u_0 $. Note that , since $ u\in L^\infty(]0,T[ \, : H^s(\R)) $, this actually ensures that  $ u\in C_w([0,T] : H^{s}(\R))$ and that $ u\in C([0,T] : H^{s'}(\R))$ for any $ s'<s $. Finally, we notice that this also ensures that $ u $ satisfies the Duhamel formula associated with \eqref{mKdV} in $C([0,T] : H^{-3}(\mathbb R))$.
  \end{remark}

% \begin{definition} Let  $ s\ge \frac{1}{6} $. We will say that the Cauchy problem associated with \eqref{mKdV} is unconditionally locally well-posed in $ H^s(\R )$ if for any initial data $ u_0\in H^s(\R) $ there exists $ T=T(\|u_0\|_{H^s})>0 $ and a solution
 %$ u \in C([0,T]; H^s( \R)) $ to \eqref{mKdV}  emanating from $ u_0 $. Moreover, $ u $ is the unique solution to  \eqref{mKdV} associated with $ u_0 $ that belongs to  $ L^\infty(]0,T[; H^s(\R) )$. Finally, for any $ R>0$, the solution-map $ u_0 \mapsto u $ is continuous from the ball of $ H^s(\R) $  with radius $ R $ centered at the origin  into $C([0,T(R)]; H^s( \R)) $.
 %\end{definition}
\begin{theorem} \label{maintheo}
Let $s > 1/3$ be given. \mbox{ } \\
\underline{\it Existence :}
 For any $u_0 \in H^s(\mathbb R)$, there exists $T=T(\|u_0\|_{H^s}) >0$ and a  solution $u$ of the IVP \eqref{mKdV} such that
\begin{equation} \label{maintheo.1}
u \in C([0,T] : H^s(\mathbb R)) \cap L^4_TL^{\infty}_x \cap X^{s-1,1}_T \cap X^{s-\frac{7}{8},\frac{15}{16}}_T \, .
\end{equation}

\noindent \underline{\it Uniqueness :} The solution is unique in the class
\begin{equation} \label{maintheo.1b}
u\in L^\infty(]0,T[ \, : H^{s}(\mathbb R)) \, .
\end{equation}

Moreover, the flow map
data-solution $:u_0 \mapsto u$ is Lipschitz from $H^s(\mathbb R)$
into $C([0,T] : H^s(\mathbb R))$.
\end{theorem}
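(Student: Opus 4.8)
The plan is to implement the now-standard "modified energy + refined energy method" scheme adapted from Molinet–Vento, using the dispersive smoothing of the mKdV group to compensate the derivative loss in the cubic nonlinearity. The starting point is the a priori estimate: working with frequency-localized pieces $P_N u$, one writes the evolution of $\sum_N N^{2s}\|P_N u\|_{L^2}^2$ and notes that the worst resonant interactions in $\partial_x(u^3)$ (those with three comparable high frequencies, or one high and two low) cannot be closed directly at $s>1/4$. The fix is to add to the $H^s$-energy a cubic correction term $\mathcal E_N(u)$ — essentially a suitably weighted space-time integral of $(P_Nu)^2 P_{\ll N}u$ type, chosen so that $\partial_t \mathcal E_N$ cancels the leading-order bad term and leaves only quartic (and higher) contributions, which are lower order. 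One then bounds the quartic remainder using Strichartz ($L^4_TL^\infty_x$), the local smoothing estimate $\|D_x^{1/2}P_N u\|_{L^\infty_xL^2_T}\lesssim \|P_Nu_0\|_{L^2}+\dots$, the maximal function estimate, and Bourgain-space control in $X^{s-1,1}_T$, together with the Besov-type auxiliary space $\widetilde{L^\infty_T}H^s_x$ in which the localized pieces are square-summable. This yields, for $T=T(\|u_0\|_{H^s})$ small, the a priori bound $\|u\|_{L^\infty_TH^s}\lesssim \|u_0\|_{H^s}$ along with control in all the spaces appearing in \eqref{maintheo.1}.

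Next, for existence I would mollify the data (or the equation), obtain smooth solutions $u^\varepsilon$, apply the uniform-in-$\varepsilon$ a priori estimates above to get a bounded sequence in the space \eqref{maintheo.1}, and pass to the limit using Aubin–Lions–type compactness on compact sets in $x$ (or Bona–Smith-type arguments) to produce a solution $u$ in the asserted class; persistence of regularity and continuity in time follow from the energy identity for the modified energy once the limit is known to solve the equation. The continuity $u\in C([0,T]:H^s)$ (not merely $L^\infty_TH^s$ weak-$*$) comes from the modified energy being continuous in time and dominating the $H^s$-norm up to lower-order terms.

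For uniqueness — the heart of the matter and the main obstacle — suppose $u_1,u_2\in L^\infty(]0,T[:H^s)$ are two solutions and set $w=u_1-u_2$, which solves $\partial_t w+\partial_x^3 w+\kappa\partial_x\big((u_1^2+u_1u_2+u_2^2)w\big)=0$. One runs the same modified-energy machinery at the level of $w$, but now only in $L^2$ (or $H^{0+}$), treating $u_1,u_2$ as given coefficients in $L^\infty_TH^s$: one forms $\|P_Nw\|_{L^2}^2$, adds the analogous cubic correction (now linear in $w$-squared and linear in the coefficients), and must absorb the resulting quartic-type terms $\int (P_Nw)^2 (P\,u_i)(P\,u_j)$ using the \emph{same} Strichartz/local-smoothing estimates for $w$ — but the difficulty is that $w$ a priori lies only in $L^\infty_TL^2$, not in any auxiliary Bourgain or Strichartz space, so one must first upgrade $w$ into $X^{-1+,1}_T\cap L^4_TL^\infty_x$ by a bootstrap that uses only the equation and the $L^\infty_TH^s$ bounds on $u_1,u_2$. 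Closing this bootstrap at $s>1/4$ — showing that the nonlinear terms are contractive on a short time interval with a constant depending only on $\|u_i\|_{L^\infty_TH^s}$ — is where the $1/4$ threshold is genuinely used, and is the step I expect to require the most care; the Lipschitz dependence of the flow follows from exactly the same difference estimate applied to two solutions with distinct data.
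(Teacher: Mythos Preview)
Your outline captures the broad strategy, but there are two genuine gaps that would prevent the argument from closing at $s>1/4$.

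\textbf{The modified energy needs two levels of correction, not one.} Since the nonlinearity is cubic, the first correction to $\|P_Nu\|_{L^2}^2$ must be \emph{quartic} (a 4-linear expression in the $\widehat{u}_j$'s), not cubic as you write; its linear time derivative then cancels the resonant part $I_N^{med}$ of $\int P_N\partial_x(u^3)\,P_Nu$. But the \emph{nonlinear} time derivative of this quartic correction produces new 6-linear terms, and among these is again a resonant piece ($J_N^{med}$ in the paper's notation) that cannot be bounded at $s>1/4$ by the available norms. The paper therefore adds a \emph{second}, sixth-order correction $\mathcal{E}_N^5$, built so that its linear time derivative cancels $J_N^{med}$; only then do all remaining terms (now up to 8-linear) close. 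Without this second modification your quartic-remainder bound would fail precisely on the resonant region $M_{med(3)}^2 \lesssim M_{min(5)} \le M_{med(5)} \ll N$.

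\textbf{Uniqueness is run at $H^{s'}$, not at $L^2$.} The paper does \emph{not} estimate $w=u_1-u_2$ in $L^2$ with $u_i$ as coefficients; the modified-energy machinery is instead set up from the start for a general system $\partial_t u+\partial_x^3 u = c\,\partial_x(u_1u_2u_3)$ with each $u_i$ itself solving an equation of the same type, and the coercivity/energy propositions are proved at the $H^s$ level for all functions simultaneously. For uniqueness one picks $1/4<s'<s$ and uses the elementary inequality $\|v\|_{\widetilde{L^\infty_T}H^{s'}}\lesssim \|v\|_{L^\infty_T H^s}$ (Cauchy--Schwarz in the dyadic sum) to place both $u_1,u_2$ in the working space $\widetilde{L^\infty_T}H^{s'}$ from the bare hypothesis $u_i\in L^\infty_T H^s$; after rescaling to make these norms small, the difference estimate \eqref{maintheo.4} at level $s'$ gives $w\equiv 0$ directly. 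No separate bootstrap to upgrade $w$ into auxiliary spaces is performed, and neither local smoothing nor the maximal function estimate is used anywhere --- only the refined Strichartz bound $L^4_TL^\infty_x$ and the $X^{s-1,1}_T$ norm appear. Existence likewise avoids compactness: the difference estimate shows that the smooth solutions $u_N$ emanating from $P_{\le N}u_0$ form a Cauchy sequence in $C([0,1];H^s)$.
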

\begin{remark}
We refer to Section 2.2 for the definition of the norms  $\|u\|_{X^{s,b}_T}$.
\end{remark}

\medskip
Our technique of proof also yields \textit{a priori} estimates for the solutions of mKdV in $H^s(\mathbb R)$ for $s>0$. It is worth noting that \textit{a priori} estimates in $H^s(\mathbb R)$ were already proved by Christ, Holmer and Tataru for $-\frac18<s<\frac14$ in \cite{ChHoTa}. Their proof relies on short time Fourier restriction norm method in the context of the atomic spaces $U$, $V$ and the $I$-method. Although our result is not as strong as Christ, Holmer and Tataru's one, we hope that it still may be of interest due to the simplicity of our proof.

\begin{theorem} \label{secondtheo}
Let $s >0 $ and $ u_0\in H^\infty(\R) $. Then there exists $T=T(\|u_0\|_{H^s})>0$  such that the solution $ u $ to \eqref{mKdV} emanating from $ u_0 $ satisfies\footnote{See Section \ref{spaces} for the definition of the $ \widetilde{L^{\infty}_T}H^s_x$-norm.}
\begin{equation} \label{secondtheo.1}
\|u\|_{\widetilde{L^{\infty}_T}H^s_x}+\|u\|_{X^{s-1,1}_T}+\|u\|_{L^4_TL^{\infty}_x} \lesssim \|u_0\|_{H^s_x} \, .
\end{equation}
Moreover, for any $u_0\in H^s(\R) $, there exists a solution $u\in L^\infty_T H^s_x\cap L^4_T L^\infty_x $ to \eqref{mKdV} emanating from $u_0$ that satisfies
\eqref{secondtheo.1}.
\end{theorem}
\medskip
\begin{remark}
Note that for $ u_0\in L^2(\R) $, the existence of weak solutions of \eqref{mKdV}, in the sense of Definition \ref{def}, is well-known by making use of the  so-called Kato smoothing effect. Such solution belongs to $L^\infty_t L^2_x \cap L^2_{t,loc} H^1_{loc} $. Our result indicates that if $ u_0$  belongs to $ H^s(\R) $, $s>0$, instead of $ L^2(\R) $, then we can ask the weak solution to satisfy also \eqref{secondtheo.1} and, in particular, to propagate the $ H^s $-regularity on some time interval.
\end{remark}
\medskip

To prove Theorems \ref{maintheo} and  \ref{secondtheo}, we derive energy estimates on the dyadic blocks $ \|P_Nu\|_{H^s_x}^2$ by  taking advantage of the resonant relation and the fact that any solution enjoys some conormal regularity.
 This approach has been introduced by the first and the third authors in \cite{MoVe}. Note however that, here, to bound some Bourgain's norm of a solution,  we  need first  to bound its $\|\cdot\|_{L^4_TL^{\infty}_x}$-norm. This norm is in turn controlled by using a refined Strichartz estimate derived by chopping the time interval in small pieces whose length depends on the spatial frequency.  Note that it was first established by Koch and Tzvetkov \cite{KoTz} (see also Kenig and Koenig \cite{KeKo} for an improved version) in the Benjamin-Ono context.

The main difficulty to estimate $\frac{d}{dt}\|P_Nu\|_{H^s_x}^2$ is to handle the resonant term $\mathcal{R}_N$, typical of the cubic nonlinearity $\partial_x(u^3)$.  When $u$ is the solution of mKdV, $\mathcal{R}_N$ writes $\mathcal{R}_N=\int \partial_x\big( P_{+N}uP_{+N}uP_{-N}u\big)P_{-N}u dx$. Actually, it turns out that we can always put the derivative appearing in $\mathcal{R}_N$ on a low frequency product by integrating by parts$\footnote{For technical reason we perform this integration by parts in Fourier variables.}$, as it was done in \cite{IoKeTa} for quadratic nonlinearities. This allows us to derive the \textit{a priori} estimate of Theorem \ref{secondtheo} in $H^s(\mathbb R)$ for $s>0$. Unfortunately, this is not the case anymore for the difference of two solutions of mKdV due to the lack of symmetry of the corresponding equation. To overcome this difficulty we modify the $H^s$-norm by higher order terms up to order 6. These higher order terms are constructed so that the  contribution  of their time derivatives coming from the linear part of the equation will cancel out the resonant term $\mathcal{R}_N$. The use of a modified energy is well-known to be a  quite powerful  tool in PDE's (see for instance \cite{MN} and \cite{KePi}).
 Note however that, in our case, we need to define the modified energy in Fourier variables due to the resonance relation associated to the cubic nonlinearity. This way to construct the modified energy has much in common with  the way to construct the modified energy in the I-method (cf. \cite{CKSTT}).
\smallskip

Finally let us mention that the tools developed in this paper together with some ideas of \cite{TT} and \cite{NTT} enabled us in \cite{MoPiVe} to get the unconditional well-posedness of the periodic mKdV equation in $ H^s(\T) $ for $ s\ge 1/3$.
We also  hope that the techniques introduced here could be useful in the study of the Cauchy problem at low regularity of other cubic nonlinear dispersive equations such as the modified Benjamin-Ono equation and the derivative nonlinear Schr\"odinger equation.

\medskip
The rest of the paper is organized as follows. In Section \ref{notation}, we introduce the notations, define the function spaces and state some preliminary estimates. The multilinear estimates at the $L^2$-level are proved in Section \ref{Secmultest}. Those estimates are used to derive the energy estimates in Section \ref{Secenergy}. Finally, we give the proofs of Theorems \ref{maintheo} and \ref{secondtheo} respectively in Sections \ref{Secmaintheo} and \ref{Secsecondtheo}.

\section{Notation, Function spaces and preliminary estimates} \label{notation}

\subsection{Notation}\label{subnotation}
For any positive numbers $a$ and $b$, the notation $a \lesssim b$
means that there exists a positive constant $c$ such that $a \le c
b$. We also denote $a \sim b$ when $a \lesssim b$ and $b \lesssim
a$. Moreover, if $\alpha \in \mathbb R$, $\alpha_+$, respectively
$\alpha_-$, will denote a number slightly greater, respectively
lesser, than $\alpha$.

Let us denote by  $\mathbb D =\{N>0 : N=2^n \ \text{for some} \ n \in \mathbb Z \}$ the dyadic numbers. Usually, we use $n_i$, $j_i$, $m_i$ to denote integers and $N_i=2^{n_i}$, $L_i=2^{j_i}$ and $M_i=2^{m_i}$ to denote dyadic numbers.

For $N_1, \ N_2 \in \mathbb D$, we use the notation $N_1 \vee N_2=\max\{N_1,N_2\}$ and $N_1 \wedge N_2 =\min\{N_1,N_2\}$.
Moreover, if $N_1, \, N_2, \, N_3 \in \mathbb D$, we also denote by $N_{max} \ge N_{med} \ge N_{min}$ the maximum, sub-maximum and minimum of $\{N_1,N_2,N_3\}$.

For $u=u(x,t) \in \mathcal{S}'(\mathbb R^2)$,
$\mathcal{F}u$ will denote its space-time Fourier
transform, whereas $\mathcal{F}_xu=\widehat{u}$, respectively
$\mathcal{F}_tu$, will denote its Fourier transform
in space, respectively in time. For $s \in \mathbb R$, we define the
Bessel and Riesz potentials of order $-s$, $J^s_x$ and $D_x^s$, by
\begin{displaymath}
J^s_xu=\mathcal{F}^{-1}_x\big((1+|\xi|^2)^{\frac{s}{2}}
\mathcal{F}_xu\big) \quad \text{and} \quad
D^s_xu=\mathcal{F}^{-1}_x\big(|\xi|^s \mathcal{F}_xu\big).
\end{displaymath}

We also denote by $U(t)=e^{-t\partial_x^3}$ the unitary group associated to the linear part of \eqref{mKdV}, \textit{i.e.},
\begin{displaymath}
U(t)u_0=e^{-t\partial_x^3}u_0=\mathcal{F}_x^{-1}\big(e^{it\xi^3}\mathcal{F}_x(u_0)(\xi) \big) \, .
\end{displaymath}

Throughout the paper, we fix a smooth cutoff function $\chi$ such that
\begin{displaymath}
\chi \in C_0^{\infty}(\mathbb R), \quad 0 \le \chi \le 1, \quad
\chi_{|_{[-1,1]}}=1 \quad \mbox{and} \quad  \mbox{supp}(\chi)
\subset [-2,2].
\end{displaymath}
We set  $ \phi(\xi):=\chi(\xi)-\chi(2\xi) $. For $l \in \mathbb Z$, we define
\begin{displaymath}
\phi_{2^l}(\xi):=\phi(2^{-l}\xi), \end{displaymath} and, for $ l\in \mathbb Z \cap [1,+\infty) $,
\begin{displaymath}
\psi_{2^{l}}(\xi,\tau)=\phi_{2^{l}}(\tau-\xi^3).
\end{displaymath}
By convention, we also denote
\begin{displaymath}
\phi_0(\xi)=\chi(2\xi) \quad \text{and} \quad \psi_{0}(\xi,\tau):=\chi(2(\tau-\xi^3)) \, .
\end{displaymath}
Any summations over capitalized variables such as $N, \, L$, $K$ or
$M$ are presumed to be dyadic. Unless stated otherwise, we will work with non-homogeneous dyadic decompositions in $N$, $L$ and $K$,
\textit{i.e.} these variables range over numbers of the form $\mathbb D_{nh}=\{2^k
: k \in \mathbb N \} \cup \{0\}$, whereas we will work with homogeneous dyadic decomposition in $M$, \textit{i.e.} these variables range over $\mathbb D$ . We call the numbers in $\mathbb D_{nh}$ \textit{nonhomogeneous dyadic numbers}. Then, we have that $\displaystyle{\sum_{N}\phi_N(\xi)=1}$,
\begin{displaymath}
 \mbox{supp} \, (\phi_N) \subset
I_N:=\{\frac{N}{2}\le |\xi| \le 2N\}, \ N \ge 1, \quad \text{and} \quad
\mbox{supp} \, (\phi_0) \subset I_0:=\{|\xi| \le 1\}.
\end{displaymath}

Finally, let us define the Littlewood-Paley multipliers $P_N$, $R_K$ and $Q_L$ by
\begin{displaymath}
P_Nu=\mathcal{F}^{-1}_x\big(\phi_N\mathcal{F}_xu\big), \quad R_Ku=\mathcal{F}^{-1}_t\big(\phi_K\mathcal{F}_tu\big) \quad \text{and} \quad
Q_Lu=\mathcal{F}^{-1}\big(\psi_L\mathcal{F}u\big),
\end{displaymath}
 $P_{\ge N}:=\sum_{K \ge N} P_{K}$,  $P_{\le N}:=\sum_{K \le N} P_{K}$, $Q_{\ge L}:=\sum_{K \ge L} Q_{K}$ and   $Q_{\le L}:=\sum_{K \le L} Q_{K}$.

 Sometimes, for the sake of simplicity and when there is no risk of confusion, we also denote $u_N=P_Nu$.

\subsection{Function spaces} \label{spaces}
For $1 \le p \le \infty$, $L^p(\mathbb R)$ is the usual Lebesgue
space with the norm $\|\cdot\|_{L^p}$. For $s \in \mathbb R$,
the Sobolev space $H^s(\mathbb R)$  denotes the space of all  distributions of $\mathcal{S}'(\mathbb R)$ whose usual
norm $\|u\|_{H^s}=\|J^s_xu\|_{L^2}$ is finite.

 If $B$ is one of the spaces defined above, $1 \le p \le \infty$ and $T>0$, we
define the space-time spaces $L^p_ t B_x$, $L^p_TB_x$, $ \widetilde{L^p_t} B_x $ and $\widetilde{L^p_T}B_x$
 equipped with  the norms
\begin{displaymath}
\|u\|_{L^p_ t B_x} =\Big(\int_{\R}\|f(\cdot,t)\|_{B}^pdt\Big)^{\frac1p} , \quad
\|u\|_{L^p_ T B_x} =\Big(\int_0^T\|f(\cdot,t)\|_{B}^pdt\Big)^{\frac1p}
\end{displaymath}
with obvious modifications for $ p=\infty $,  and
\begin{displaymath}
\|u\|_{\widetilde{L^p_ t }B_x} =\Big(\sum_{N}
 \| P_N u \|_{L^p_ t B_x}^2\Big)^{\frac12}, \quad \|u\|_{\widetilde{L^p_ T }B_x} =\Big(\sum_{N}
 \| P_N u \|_{L^p_ T B_x}^2\Big)^{\frac12} \, .
\end{displaymath}

For $s$, $b \in \mathbb R$, we introduce the Bourgain spaces
$X^{s,b}$ related to the linear part of \eqref{mKdV} as
the completion of the Schwartz space $\mathcal{S}(\mathbb R^2)$
under the norm
\begin{equation} \label{X1}
\|u\|_{X^{s,b}} := \left(
\int_{\mathbb{R}^2}\langle\tau-\xi^3\rangle^{2b}\langle \xi\rangle^{2s}|\mathcal{F}(u)(\xi, \tau)|^2
d\xi d\tau \right)^{\frac12},
\end{equation}
where $\langle x\rangle:=1+|x|$.
By using the definition of $U$, it is easy to see that
\begin{equation} \label{X2}
\|u\|_{X^{s,b}}\sim\| U(-t)u \|_{H^{s,b}_{x,t}} \quad \text{where} \quad \|u\|_{H^{s,b}_{x,t}}=\|J^s_xJ^b_tu\|_{L^2_{x,t}} \, .
\end{equation}

We define our resolution space $Y^s=X^{s-1,1} \cap X^{s-\frac{7}{8},\frac{15}{16}} \cap \widetilde{L^\infty_t} H^s_x $,   with the associated norm
\begin{equation} \label{YT}
\|u\|_{Y^s}=\|u\|_{X^{s-1,1}}+\|u\|_{X^{s-\frac{7}{8},\frac{15}{16}}}+\|u\|_{\widetilde{L^\infty_t}H^s_x} \; .
\end{equation}
It is clear from the definition that $\widetilde{L^{\infty}_T}H^s_x \hookrightarrow L^{\infty}_TH^s_x$, \textit{i.e.}
\begin{equation} \label{tildenorm}
\|u\|_{L^{\infty}_TH^s_x} \lesssim \|u\|_{\widetilde{L^{\infty}_T}H^s_x}, \quad \forall \, u \in \widetilde{L^{\infty}_T}H^s_x \, .
\end{equation}
Note that this estimate still holds true if we replace $T$ by $t$. However, the reverse inequality is only true if we allow a little loss in space regularity. Let $s', \, s \in \mathbb R$ be such that $s'<s$. Then,
\begin{equation} \label{tildenorm2}
\|u\|_{\widetilde{L^{\infty}_T}H^{s'}_x} \lesssim \|u\|_{L^{\infty}_TH^s_x}, \quad \forall \, u \in L^{\infty}_TH^s_x \, .
\end{equation}

Finally, we will also use a restriction in time versions of these spaces.
Let $T>0$ be a positive time and $F$ be a normed space of space-time functions. The restriction space $F_T$ will be the space of functions $u : \mathbb R \times [0,T] \longrightarrow \mathbb R$  satisfying
\begin{displaymath}
\|u\|_{F_T} =\inf \big\{ \|\tilde{u}\|_{F} \ : \ \tilde{u}: \mathbb R \times \mathbb R\to \mathbb R \ \text{and} \ \tilde{u}_{|_{\mathbb R \times [0,T]}} = u \big\} <\infty\, .
\end{displaymath}

\subsection{Extension operator} \label{extsec}
We introduce an extension operator $\rho_T$ which is a bounded operator from $  \widetilde{L^{\infty}_T}H^s_x \cap X^{s-1,1}_T\cap X^{s-\frac78,\frac{15}{16}}_T \cap L^{4}_TL^{\infty}_x$  into $\widetilde{L^{\infty}_t}H^s_x \cap X^{s-1,1} \cap X^{s-\frac78,\frac{15}{16}} \cap L^{4}_tL^{\infty}_x$.

\begin{definition} \label{def.extension}
Let $0<T \le 1$ and $u:\mathbb R \times [0,T] \rightarrow \mathbb R$ be given. We define the extension operator $\rho_T$ by
\begin{equation}\label{defrho}
\rho_T(u)(t):= U(t)\chi(t) U(-\mu_T(t)) u(\mu_T(t))\; ,
\end{equation}
where $ \chi $ is the smooth cut-off function defined in Section \ref{subnotation} and $\mu_T $ is the  conti-nuous piecewise affine function defined  by
\begin{displaymath}
 \mu_T(t)=\left\{\begin{array}{rcl}
 0  &\text{for } &  t<0 \\
 t  &\text {for }& t\in [0,T] \\
  T & \text {for } & t>T
 \end{array}
 \right. .
\end{displaymath}
\end{definition}

It is clear from the definition that $\rho_T(u)(x,t)=u(x,t)$ for $(x,t) \in \mathbb R \times [0,T]$.

\begin{lemma} \label{extension}
Let $0<T \le 1$, $s, \, \alpha, \, \theta, \,  b \in \mathbb R$ such that $\alpha \le s+\frac14$ and $\frac12<b \le 1$. Then,
\begin{displaymath}
\begin{split}
\rho_T : \ &  \widetilde{L^{\infty}_T}H^s_x \cap X^{\theta,b}_T \cap L^4_TW^{\alpha,\infty}_x \longrightarrow \widetilde{L^{\infty}_t}H^s_x \cap X^{\theta,b} \cap L^4_tW^{\alpha,\infty}_x \\
 &u \mapsto \rho_T(u)
\end{split}
\end{displaymath}
is a bounded linear operator, \textit{i.e.}
\begin{equation} \label{extension.1}
\begin{split}
\|\rho_T(u)\|_{\widetilde{L^{\infty}_t}H^s_x} + \|\rho_T(u)\|_{X^{\theta,b}}&+\|\rho_T(u)\|_{L^4_tW^{\alpha,\infty}_x} \\ &\lesssim \|u\|_{\widetilde{L^{\infty}_T}H^s_x}+\|u\|_{X^{\theta,b}_T}+\|u\|_{L^4_TW^{\alpha,\infty}_x} \, ,
\end{split}
\end{equation}
for all $u \in  \widetilde{L^{\infty}_T}H^s_x \cap X^{\theta,b}_T \cap L^4_TW^{\alpha,\infty}_x$.

Moreover, the implicit constant in \eqref{extension.1} can be chosen independent of $0<T \le 1$, $s, \, \alpha, \, \theta$ and $\frac12 < b \le 1$.
\end{lemma}

\begin{proof}
First, the unitarity of the free group $ U(\cdot) $ in $ H^s(\R) $ easily leads to
\begin{displaymath}
\|\rho_T(u)\|_{\widetilde{L^\infty_t}H^s_x} \lesssim \|u(\mu_T(\cdot))\|_{\widetilde{L^\infty_t }H^s_x}\lesssim \|u\|_{\widetilde{L^\infty_T}  H^s_x} + \|u(0)\|_{H^s}+ \|u(T)\|_{H^s} \; .
\end{displaymath}
Now, since $ b>1/2$, it is well-known (see for instance \cite{Gi}), that $X_T^{\theta,b} \hookrightarrow C([0,T]:H^{\theta}(\mathbb R))$. Therefore,
$u\in C([0,T]:H^{\theta}(\mathbb R))\cap  \widetilde{L^{\infty}_T}H^s_x
 \hookrightarrow  C([0,T]:H^{\theta}(\mathbb R))\cap L^{\infty}_TH^s_x$ and  we claim that
\begin{equation} \label{tg100}
\|u(0)\|_{H^s} \le \|u\|_{L^{\infty}_TH^s_x} \quad \text{and} \quad \|u(T)\|_{H^s} \le  \|u\|_{L^{\infty}_TH^s_x}\, .
\end{equation}
Indeed, if it is not the case, assuming for instance that $\|u(0)\|_{H^s} > \|u\|_{{L^{\infty}_T}H^s_x}$, there would exist $\epsilon>0$ and a decreasing sequence $\{t_n\} \subset (0,T)$ tending to $0$ such that for any $n \in \mathbb N$, $\|u(t_n)\|_{H^s} \le \|u(0)\|_{H^s}-\epsilon$.  The continuity of $ u $ with values in $ H^\theta(\R) $ then ensures that $ u(t_n) \rightharpoonup u(0) $ in $H^s(\mathbb R)$, which forces $ \|u(0)\|_{H^s}\le \liminf \|u(t_n)\|_{H^s} $   and yields the contradiction. Therefore, we conclude by using \eqref{tildenorm} that
\begin{equation} \label{tg1}
\|\rho_T(u)\|_{\widetilde{L^\infty_t}H^s_x} \lesssim  \|u\|_{\widetilde{L^\infty_T}  H^s_x} \, .
\end{equation}

Second, according to classical results on extension operators (see for instance \cite{LM}),  for any $ 1/2<b\le 1$,  $f \mapsto \chi f(\mu_T(\cdot)) $ is  linear continuous    from $ H^b([0,T]) $ into $ H^b(\R) $  with a bound that does not depend on $ T>0 $. Then, 
the definition of the $ X^{\theta,b}$-norm leads, for $1/2<b\le 1 $ and   $\theta\in \R $, to
\begin{equation}\label{tg2}
\|\rho_T(u)\|_{X^{\theta,b}}  = \|\chi\, U(-\mu_T(\cdot)) u(\mu_T(\cdot))\|_{H^{{\theta},b}_{x,t} }
 \lesssim  \|U(-\cdot) u\|_{H^b([0,T[; H^{\theta})}
 \lesssim  \|u\|_{X^{\theta,b}_T} \; .
\end{equation}

Finally,  for $ \alpha \in \R $,
\begin{align*}
\|J^\alpha_x \rho_T(u)\|_{L^4_t L^\infty_x} & \lesssim  \|\chi U(-\cdot)J^\alpha_x   u(0)\|_{L^4(]-\infty,0[; L^\infty_x)} + \|J^\alpha_x u \|_{L^4_T L^\infty_x}
 \\
 & + \|\chi U(-\cdot)J^\alpha_x  U(T) u(T)\|_{L^4(]T,+\infty[; L^\infty_x)} \,.
\end{align*}
Now by using the Strichartz estimate related to the unitary group $U$ (see estimate \eqref{strichartz} in the next subsection), we deduce that
\begin{align*}
  \|\chi U(-\cdot)J^\alpha_x   u(0)\|_{L^4(]-\infty,0[; L^\infty_x)} & \lesssim  \|   U(-\cdot)J^\alpha_x   u(0)\|_{L^4(]-2,0[;L^\infty_x)}\lesssim \|u(0)\|_{H^s_x} \, ,
 \end{align*}
 since $\alpha \le s-\frac14$, and in the same way
 \begin{align*}
  \|\chi U(-\cdot) U(T)J^\alpha_x   u(T)\|_{L^4(]T,+\infty[; L^\infty_x)}
 & \lesssim \|U(T)u(T)\|_{H^s}=\|u(T)\|_{H^{s}}\; .
 \end{align*}
This ensures by using \eqref{tg100} that
\begin{equation} \label{tg3}
\|J^\alpha_x \rho_T(u)\|_{L^4_t L^\infty_x} \lesssim \|J^\alpha_x u \|_{L^4_T L^\infty_x} + \|u\|_{\widetilde{L^\infty_T}  H^s_x} \, .
\end{equation}

Therefore, we conclude the proof of \eqref{extension.1} gathering \eqref{tg1}-\eqref{tg3}.
\end{proof}

\begin{remark}
In the following, we will work with the resolution space $Y^s_T$. While it follows clearly from the definition of $Y^s_T$ that
\begin{equation} \label{resolution.1}
\|u\|_{\widetilde{L^{\infty}_T}H^s_x}+\|u\|_{X^{s-1,1}_T}+\|u\|_{X^{s-\frac78,\frac{15}{16}}_T} \lesssim \|u\|_{Y^s_T}, \quad \forall \, u \in Y^s_T \, ,
\end{equation}
the reverse inequality is not straightforward. However, it can be proved by using the extension operator $\rho_T$. Indeed, it follows from  Lemma \ref{extension} that
\begin{equation} \label{resolution.2}
\|u\|_{Y^s_T} \le \|\rho_T(u)\|_{Y^s} \lesssim \|u\|_{\widetilde{L^{\infty}_T}H^s_x}+\|u\|_{X^{s-1,1}_T}+\|u\|_{X^{s-\frac78,\frac{15}{16}}_T}\, .
\end{equation}
In particular, this proves that $Y^s_T= \widetilde{L^{\infty}_T}H^s_x \cap X^{s-1,1}_T \cap X^{s-\frac78,\frac{15}{16}}_T$.
\end{remark}

\subsection{Refined Strichartz estimates} First, we recall the Strichartz estimates associated to the unitary Airy group derived in \cite{KPV1}. For all
 $u_0\in L^2(\mathbb R)$
\begin{equation} \label{strichartz}
\|e^{-t\partial^3_x}D_x^{\frac14}u_0 \|_{L^4_tL^{\infty}_x} \lesssim \|u_0\|_{L^2} \ ,
\end{equation}
and for all $ g\in  L^{\frac43}_t L^1_x$,
\begin{equation} \label{strichartz2}
\Bigr\|\int_0^t e^{-(t-t')\partial^3_x}D_x^{\frac12} g(t')\, dt' \Bigl\|_{L^4_tL^{\infty}_x} \lesssim \|g\|_{L^{\frac43}_t L^1_x} \ .
\end{equation}
Note that these two estimates are equivalent thanks to the $ T T^* $-argument. \\
Following the arguments in \cite{KeKo} and \cite{KoTz}, we derive a refined Strichartz estimate for the solutions of the linear problem
\begin{equation} \label{linearKdV}
\partial_tu+\partial_x^3u=F \, .
\end{equation}

\begin{proposition} \label{refinedStrichartz}
Assume that $T>0$ and $\delta \ge 0$. Let $u$ be a smooth solution to \eqref{linearKdV} defined on the time interval $[0,T]$. Then,
\begin{equation} \label{refinedStrichartz1}
\|u\|_{L^4_TL^{\infty}_x} \lesssim \|J_x^{\frac{\delta-1}4+\theta}u\|_{L^{\infty}_TL^2_x}+\|J_x^{-\frac{\delta+1}2+\theta}F\|_{L^4_TL^1_x}  \ ,
\end{equation}
for any $\theta>0$.
\end{proposition}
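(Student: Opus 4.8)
The plan is to run the Koch–Tzvetkov / Kenig–Koenig chopping argument: decompose the time interval $[0,T]$ into $\mathcal O(T/\delta$-dependent$)$ subintervals whose length is tuned to the spatial frequency, apply the homogeneous Strichartz estimate \eqref{strichartz} on each piece together with the Duhamel formula, and sum up. First I would perform a Littlewood-Paley decomposition $u=\sum_N P_Nu$ and, since the $L^4_TL^\infty_x$ norm is bounded by the $\ell^1$-sum in $N$ (and then by the $\ell^2$-sum after absorbing a harmless $N^{\theta}$ factor, which is exactly where the arbitrarily small $\theta>0$ enters), reduce matters to a frequency-localized estimate for $u_N=P_Nu$. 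Fix a dyadic $N$ and split $[0,T]=\bigcup_{j} I_j$ with $I_j=[a_j,a_{j+1}]$ of length $|I_j|\sim N^{-\delta}$ (so there are $\lesssim 1+TN^{\delta}$ intervals). On each $I_j$ write, via Duhamel for \eqref{linearKdV},
\begin{equation} \label{plan.duhamel}
u_N(t)=U(t-a_j)u_N(a_j)+\int_{a_j}^t U(t-t')P_NF(t')\,dt', \quad t\in I_j .
\end{equation}

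Next I would apply \eqref{strichartz} on $I_j$ to the first (homogeneous) term: $\|U(t-a_j)u_N(a_j)\|_{L^4_{I_j}L^\infty_x}\lesssim \|D_x^{-1/4}u_N(a_j)\|_{L^2_x}\lesssim N^{-1/4}\|u_N(a_j)\|_{L^2_x}$; for the Duhamel term I would use Minkowski's inequality in $t'$ to pull the $L^4_{I_j}L^\infty_x$ norm inside, apply \eqref{strichartz} again, and then integrate the resulting $\|D_x^{-1/4}P_NF(t')\|_{L^2_x}\sim N^{-1/4}\|P_NF(t')\|_{L^2_x}$ over $t'\in I_j$, picking up a factor $|I_j|^{3/4}\sim N^{-3\delta/4}$ from Hölder in time on an interval of length $N^{-\delta}$. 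Thus on each $I_j$,
\begin{equation} \label{plan.onepiece}
\|u_N\|_{L^4_{I_j}L^\infty_x} \lesssim N^{-\frac14}\|u_N(a_j)\|_{L^2_x} + N^{-\frac14-\frac{3\delta}{4}}\int_{I_j}\|P_NF(t')\|_{L^2_x}\,dt' .
\end{equation}
Now I would raise \eqref{plan.onepiece} to the fourth power, sum over the $\lesssim 1+TN^{\delta}$ intervals $I_j$, and take fourth roots. The homogeneous contribution gives $\big(\sum_j N^{-1}\|u_N(a_j)\|_{L^2_x}^4\big)^{1/4}\lesssim N^{-1/4}(1+TN^{\delta})^{1/4}\|u_N\|_{L^\infty_TL^2_x}\lesssim N^{(\delta-1)/4}\langle T\rangle^{1/4}\|u_N\|_{L^\infty_TL^2_x}$ (the $\langle T\rangle^{1/4}$ being absorbed into the implicit constant, or handled by first assuming $T\le 1$ and then iterating). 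For the inhomogeneous term, after summing the fourth powers I would bound $\sum_j\big(\int_{I_j}\|P_NF\|_{L^2_x}\big)^4\le \big(\sum_j\int_{I_j}\|P_NF\|_{L^2_x}\big)^4$ is too lossy, so instead I apply Hölder in $j$: $\sum_j\big(\int_{I_j}\|P_NF\|_{L^2_x}\,dt'\big)^4\le \sum_j |I_j|^{3}\int_{I_j}\|P_NF\|_{L^2_x}^4$... — the cleaner route is to keep the time-Hölder exponents matched, i.e. estimate $\int_{I_j}\|P_NF\|_{L^2_x}\,dt'\le |I_j|^{3/4}\|P_NF\|_{L^4_{I_j}L^2_x}$ directly in \eqref{plan.onepiece}, giving on each piece a factor $|I_j|^{3/4}=N^{-3\delta/4}$ times $\|P_NF\|_{L^4_{I_j}L^2_x}$, and then $\sum_j\|P_NF\|_{L^4_{I_j}L^2_x}^4=\|P_NF\|_{L^4_TL^2_x}^4$ telescopes exactly. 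This yields $\|u_N\|_{L^4_TL^\infty_x}\lesssim N^{\frac{\delta-1}{4}}\|u_N\|_{L^\infty_TL^2_x}+N^{-\frac{3\delta+1}{4}}\|P_NF\|_{L^4_TL^2_x}$, and summing in $N$ with the $N^{\theta}$ loss gives \eqref{refinedStrichartz1}.

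The main obstacle — and the only place one must be careful — is the bookkeeping when summing the fourth powers over the subintervals: one has to avoid the naive $\ell^1$-in-$j$ estimate (which loses the full factor $(TN^\delta)^{3/4}$ and destroys the gain) by keeping the time-integrability exponent on $F$ equal to $4$ throughout, so that the sum over $j$ recombines perfectly into $\|P_NF\|_{L^4_TL^2_x}^4$; correspondingly, the homogeneous term must be handled by choosing the partition points $a_j$ (or by an averaging/pigeonhole over a family of partitions) so that $\sum_j\|u_N(a_j)\|_{L^2_x}^4$ is genuinely controlled by $(\#\{j\})\,\|u_N\|_{L^\infty_TL^2_x}^4$. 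A secondary point is the transition from the $\ell^2_N$ norm hidden in $\widetilde{L^\infty_T}H^s_x$-type quantities back to the plain norms in \eqref{refinedStrichartz1}: since \eqref{refinedStrichartz1} is stated with ordinary $L^\infty_TL^2_x$ and $L^4_TL^2_x$ norms on the right, the passage $\sum_N \lesssim \big(\sum_N N^{2\theta}(\cdot)^2\big)^{1/2}$ at fixed small $\theta$ costs an $\varepsilon$ of derivative, which is precisely the role of $\theta>0$ in the statement and is harmless.
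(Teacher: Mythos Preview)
Your proposal is correct and follows exactly the paper's own argument: Littlewood--Paley decomposition with an $N^{\theta}$ loss to pass from $\ell^1_N$ to the sup, then for each dyadic $N$ chop $[0,T]$ into $\sim N^{\delta}$ intervals of length $\sim N^{-\delta}$, apply Duhamel and the Strichartz estimate \eqref{strichartz} on each piece, and sum the fourth powers using H\"older in time on the inhomogeneous term (your ``cleaner route'' $\sum_j |I_j|^3\int_{I_j}\|D_x^{-1/4}F_N\|_{L^2_x}^4$ is precisely what the paper writes). The only cosmetic difference is that the paper suppresses the $T$-dependence in the count $\#J\sim N^{\delta}$, whereas you make it explicit; no averaging or pigeonhole is needed for the homogeneous term since $\sum_j\|u_N(a_j)\|_{L^2_x}^4\le (\#J)\|u_N\|_{L^\infty_TL^2_x}^4$ trivially.
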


\begin{proof} Let $u$ be solution to \eqref{linearKdV} defined on a time interval $[0,T]$. We use a nonhomogeneous Littlewood-Paley
decomposition, $u=\sum_Nu_N$ where $u_N=P_Nu$, $N$ is a nonhomogeneous dyadic number and also denote $F_N=P_NF$. Then, we get from the Minkowski  inequality that
\begin{displaymath}
\|u\|_{L^4_TL^{\infty}_x}\le  \sum_N\|u_N\|_{L^4_TL^{\infty}_x}
\lesssim \sup_{N}N^{\theta}\|u_N\|_{L^4_TL^{\infty}_x} \, ,
\end{displaymath}
for any $\theta>0$. Recall that $P_0$ corresponds to the projection in low frequencies, so that we set $0^{\theta}=1$ by convention. Since H\"older and Bernstein inequalities easily yield
$$
\|P_0 u \|_{L^4_TL^{\infty}_x}\lesssim  T^{\frac14} \| P_0 u \|_{L^\infty_T L^2_x} \; ,
$$
it is enough to prove that
\begin{equation} \label{refinedStrichartz2}
\|u_N\|_{L^4_TL^{\infty}_x} \lesssim \|D_x^{\frac{\delta-1}4}u_N\|_{L^{\infty}_TL^2_x}+\|D_x^{-\frac{\delta+1}2}F_N\|_{L^4_TL^1_x} \, ,
\end{equation}
for any $\delta \ge 0$ and any dyadic number $N \in \{2^k : k \in \mathbb N\} $.

Let $\delta$ be a nonnegative number. We chop out the interval in small intervals of $N^{-\delta}$. In other words, we have that $[0,T]=\underset{j \in J}{\bigcup}I_j$ where $I_j=[a_j, b_j]$, $|I_j|\thicksim N^{-\delta}$ and $\# J\sim N^{\delta}$.
Since $u_N$ is a solution to the integral equation
\begin{displaymath}
u_N(t) =e^{-(t-a_j)\partial_x^3}u_N(a_j)+\int_{a_j}^te^{-(t-t')\partial_x^3}F_N(t')dt'
\end{displaymath}
for $t \in I_j$, we deduce from \eqref{strichartz}-\eqref{strichartz2} that
\begin{displaymath}
\begin{split}
\|u_N\|_{L^4_TL^{\infty}_x} &\lesssim \Big(\sum_j \|D^{-\frac14}_xu_N(a_j)\|_{L^2_x}^4 \Big)^{\frac14}+
\Big(\sum_j \|D^{-\frac12}_xF_N \|_{L^\frac{4}{3}_{I_j} L^1_x}^4 \Big)^{\frac14} \\
& \lesssim N^{\frac{\delta}4}\|D^{-\frac14}_xu_N\|_{L^{\infty}_TL^2_x}
+\Big(\sum_j \Big(\int_{I_j}\|D^{-\frac12}_xF_N(t')\|_{L^1_x}^{\frac43} dt'\Big)^3 \Big)^{\frac14} \\
& \lesssim \|D^{\frac{\delta-1}4}_xu_N\|_{L^{\infty}_TL^2_x}+\Big(\sum_j |I_j|^2\int_{I_j}\|D^{-\frac12}_xF_N(t')\|_{L^1_x}^4dt' \Big)^{\frac14} \\
& \lesssim \|D^{\frac{\delta-1}4}_xu_N\|_{L^{\infty}_TL^2_x}+\|D^{-\frac{\delta+1}2}_xF_N\|_{L^4_TL^1_x} \, ,
\end{split}
\end{displaymath}
which concludes the proof of \eqref{refinedStrichartz2}.
\end{proof}

\section{$L^2$ multilinear estimates} \label{Secmultest}

In this section we follow some notations of \cite{Tao}. For $k \in \mathbb Z_+$ and $\xi \in \mathbb R$, let $\Gamma^k(\xi)$ denote the $k$-dimensional  \lq\lq affine hyperplane\rq\rq \, of $\mathbb R^{k+1}$ defined by
\begin{displaymath}
\Gamma^k(\xi)=\big\{ (\xi_1,\cdots,\xi_{k+1}) \in \mathbb R^{k+1} : \ \xi_1+\cdots + \xi_{k+1}=\xi\big\} \, ,
\end{displaymath}
and endowed with the obvious measure
\begin{displaymath}
\int_{\Gamma^k(\xi)}F =  \int_{\Gamma^k(\xi)}F(\xi_1,\cdots,\xi_{k+1}) :=
\int_{\mathbb R^k} F\big(\xi_1,\cdots,\xi_k,\xi-(\xi_1+\cdots+\xi_k)\big)d\xi_1\cdots d\xi_k \, ,
\end{displaymath}
for any function $F: \Gamma^k(\xi) \rightarrow \mathbb C$. When $\xi=0$, we simply denote $\Gamma^k=\Gamma^k(0)$ with the obvious modifications.

Moreover, given $T>0$, we also define $\mathbb R_T=\mathbb R \times [0,T]$ and $\Gamma^k_T=\Gamma^k \times [0,T]$ with the obvious measures
\begin{displaymath}
\int_{\mathbb R_T} u := \int_{\mathbb R \times [0,T]}u(x,t)dxdt \end{displaymath}
and
\begin{displaymath}
\int_{\Gamma^k_T} F :=\int_{\mathbb R^k\times [0,T]} F\big(\xi_1,\cdots,\xi_k,\xi-(\xi_1+\cdots+\xi_k),t\big)d\xi_1\cdots d\xi_k dt \, .
\end{displaymath}

\subsection{$L^2$ trilinear estimates}

\begin{lemma}\label{prod4-est}
 Let $f_j\in L^2(\mathbb R)$, $j=1,...,4$ and $M \in \mathbb D$. Then it holds that
\begin{equation} \label{prod4-est.1}
\int_{\Gamma^3} \phi_M(\xi_1+\xi_2) \prod_{j=1}^4|f_j(\xi_j)| \lesssim M \prod_{j=1}^4 \|f_j\|_{L^2} \, .
\end{equation}

\end{lemma}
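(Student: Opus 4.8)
The plan is to reduce the four-fold integral over $\Gamma^3$ to a product of $L^2$ inner products by exploiting that the cutoff $\phi_M(\xi_1+\xi_2)$ forces the variable $\xi_1+\xi_2$ to live in a set of measure $O(M)$. The natural move is to introduce the convolution variable $\eta := \xi_1+\xi_2$ and to split the integral as
\begin{displaymath}
\int_{\mathbb R}\phi_M(\eta)\Big(\int_{\xi_1+\xi_2=\eta}|f_1(\xi_1)||f_2(\xi_2)|\Big)\Big(\int_{\xi_3+\xi_4=-\eta}|f_3(\xi_3)||f_4(\xi_4)|\Big)\,d\eta \, ,
\end{displaymath}
which is legitimate since on $\Gamma^3$ one has $\xi_1+\xi_2+\xi_3+\xi_4=0$, so $\xi_3+\xi_4=-\eta$. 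The two inner integrals are exactly $(|f_1| * |f_2|)(\eta)$ and $(|f_3| * |f_4|)(-\eta)$.

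The key step is then to estimate each convolution pointwise by Cauchy--Schwarz: $\big|(|f_1|*|f_2|)(\eta)\big| \le \|f_1\|_{L^2}\|f_2\|_{L^2}$ for every $\eta$, and similarly for the other factor. This gives the bound
\begin{displaymath}
\int_{\mathbb R}\phi_M(\eta)\,\|f_1\|_{L^2}\|f_2\|_{L^2}\|f_3\|_{L^2}\|f_4\|_{L^2}\,d\eta \, ,
\end{displaymath}
and since $\mathrm{supp}\,\phi_M \subset \{|\eta| \le 2M\}$ with $0\le \phi_M \le 1$ (for $M\ge 1$; for the smallest homogeneous scales $\phi_M$ is supported in $\{|\eta|\lesssim M\}$ as well), the $\eta$-integral contributes a factor $\lesssim M$, yielding \eqref{prod4-est.1}. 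One should note that the same argument works symmetrically if instead one pairs $f_1$ with $f_2$ using Young's inequality $\|\,|f_1|*|f_2|\,\|_{L^\infty}\le \|f_1\|_{L^2}\|f_2\|_{L^2}$, which is the cleanest way to phrase the pointwise bound.

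I do not expect a genuine obstacle here; the only points requiring minor care are (i) justifying the change of variables and Fubini, which is harmless since all integrands are nonnegative and the $f_j$ are in $L^2$ so the convolutions are continuous and bounded, and (ii) handling the homogeneous dyadic cutoff at the low end, where $\phi_M$ for small $M$ is still supported in a set of measure $O(M)$, so the factor $M$ (not $\langle M\rangle$ or $1$) is exactly what comes out. The essential mechanism is simply: localizing one frequency sum to a band of width $M$ costs a factor $M$ after the two $L^2\times L^2 \to L^\infty$ convolution estimates absorb the remaining variables.
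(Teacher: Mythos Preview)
Your proof is correct and follows essentially the same approach as the paper: the paper also factors the integral using the convolution variable, bounds $\int \phi_M(\eta)(|f_1|\ast|f_2|)(\eta)\,d\eta \lesssim M\|f_1\|_{L^2}\|f_2\|_{L^2}$ via H\"older and Young, and controls the remaining $(f_3,f_4)$ integral by Cauchy--Schwarz. The only cosmetic difference is that the paper pulls out the $(f_3,f_4)$ part as a supremum before integrating in $\eta$, whereas you write it symmetrically as a second convolution evaluated at $-\eta$; the mechanism is identical.
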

\begin{proof}
Let us denote by $\mathcal{I}^3_M(f_1,f_2,f_3,f_4)$ the integral on the left-hand side of \eqref{prod4-est.1}. We can assume without loss of generality that $f_i \ge 0$ for $i=1,\cdots,4$. Then, we have that
\begin{equation}  \label{prod4-est.2}
\mathcal{I}^3_M(f_1,f_2,f_3,f_4) \le \mathcal{J}_M(f_1,f_2) \, \times\sup_{\xi_1,\xi_2} \int_{\mathbb R} f_3(\xi_3)f_4(-(\xi_1+\xi_2+\xi_3))d\xi_3 \, ,
\end{equation}
where
\begin{equation}  \label{prod4-est.3}
 \mathcal{J}_M(f_1,f_2)=\int_{\mathbb R^2}\phi_M(\xi_1+\xi_2)f_1(\xi_1)f_2(\xi_2)d\xi_1d\xi_2 \, .\end{equation}
H\"older's inequality yields
\begin{equation}  \label{prod4-est.4}
 \mathcal{J}_M(f_1,f_2)=\int_{\mathbb R}\phi_M(\xi_1)(f_1 \ast f_2) (\xi_1)d\xi_1 \lesssim M\|f_1 \ast f_2\|_{L^{\infty}} \lesssim M\|f_1\|_{L^2} \|f_2\|_{L^2} \, .
\end{equation}
Moreover, the Cauchy-Schwarz inequality yields
\begin{equation} \label{prod4-est.5}
\int_{\mathbb R} f_3(\xi_3)f_4(-(\xi_1+\xi_2+\xi_3))d\xi_3  \le \|f_3\|_{L^2} \|f_4\|_{L^2} \, .
\end{equation}
Therefore, estimate \eqref{prod4-est.1} follows from \eqref{prod4-est.2}--\eqref{prod4-est.5}.
\end{proof}

\medskip

For a fixed $N\ge 1$ dyadic, we introduce the following disjoint subsets of $\mathbb D^3$:
\begin{align*}
  \mathcal{M}_3^{low} &= \big\{(M_1,M_2,M_3)\in \D^3 \, : M_{min} \le N^{-\frac12} \textrm{ and } M_{med}\le 2^{-9}N\big\} \, ,\\
  \mathcal{M}_3^{med} &= \big\{(M_1,M_2,M_3)\in\D^3 \, : \, N^{-\frac12} < M_{min} \le M_{med} \le 2^{-9}N\big\} \, ,\\
  \mathcal{M}_3^{high} &= \big\{(M_1,M_2,M_3)\in\D^3 \, : \,   2^{-9}N < M_{med}  \big\} \, ,
\end{align*}
where $M_{min} \le M_{med} \le M_{max}$ denote respectively the minimum, sub-maximum and maximum of $\{ M_1,M_2,M_3\}$.

We will denote by $\phi_{M_1,M_2,M_3}$ the function
$$
\phi_{M_1,M_2,M_3}(\xi_1,\xi_2,\xi_3) = \phi_{M_1}(\xi_2+\xi_3) \phi_{M_2}(\xi_1+\xi_3) \phi_{M_3}(\xi_1+\xi_2).
$$

Next, we state a useful technical lemma.
\begin{lemma} \label{teclemma}
Let $(\xi_1,\xi_2,\xi_3) \in \mathbb R^3$ satisfy $|\xi_j| \sim N_j$ for $j=1,2,3$ and $|\xi_1+\xi_2+\xi_3| \sim N$. Let $(M_1,M_2,M_3) \in \mathcal{M}_3^{low} \cup \mathcal{M}_3^{med}$. Then it holds that
\begin{displaymath}
N_1 \sim N_2 \sim N_3\sim M_{max} \sim N \quad \text{if} \quad (\xi_1,\xi_2,\xi_3) \in \text{supp} \, \phi_{M_1,M_2,M_3} \, ,
\end{displaymath}	
\end{lemma}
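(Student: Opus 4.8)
The plan is to exploit the two defining inequalities of $\mathcal{M}_3^{low}\cup\mathcal{M}_3^{med}$ — namely $M_{min}\le N^{-\frac12}$ (hence in particular $M_{min}\le 1$) and $M_{med}\le 2^{-9}N$ — together with the constraint $|\xi_1+\xi_2+\xi_3|\sim N$, to force all three frequencies to be comparable to $N$. First I would fix $(\xi_1,\xi_2,\xi_3)\in\operatorname{supp}\phi_{M_1,M_2,M_3}$, so that $|\xi_2+\xi_3|\sim M_1$, $|\xi_1+\xi_3|\sim M_2$, $|\xi_1+\xi_2|\sim M_3$, and relabel indices so that $M_1\le M_2\le M_3$, i.e. $M_1=M_{min}$, $M_3=M_{max}$. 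Observe the algebraic identity $(\xi_1+\xi_2)+(\xi_1+\xi_3)+(\xi_2+\xi_3)=2(\xi_1+\xi_2+\xi_3)$, so $|(\xi_2+\xi_3)+(\xi_1+\xi_3)+(\xi_1+\xi_2)|=2|\xi_1+\xi_2+\xi_3|\sim N$; since the two smallest of these three quantities have size $M_{min}\le M_{med}\le 2^{-9}N\ll N$, the triangle inequality forces $M_{max}=M_3\sim N$.

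Next I would pin down the individual $N_j$. From $M_{max}\sim N$ we get, say, $|\xi_1+\xi_2|\sim N$ (if $M_3$ corresponds to a different pair the argument is symmetric). Write $\xi_3=(\xi_1+\xi_2+\xi_3)-(\xi_1+\xi_2)$; both terms on the right have modulus $\sim N$, which gives $N_3=|\xi_3|\lesssim N$, but more usefully, using instead $\xi_3=\frac12\big[(\xi_1+\xi_3)+(\xi_2+\xi_3)-(\xi_1+\xi_2)\big]$ — wait, better: from $\xi_1-\xi_2 = (\xi_1+\xi_3)-(\xi_2+\xi_3)$ we get $|\xi_1-\xi_2|\le M_2+M_1=M_{med}+M_{min}\le 2^{-8}N$, so $\xi_1$ and $\xi_2$ are within $2^{-8}N$ of each other while their sum has size $\sim N$; hence $N_1\sim N_2\sim N$. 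Finally $\xi_3=(\xi_1+\xi_2+\xi_3)-\xi_1-\xi_2$ and also $\xi_3+\xi_1=(\text{term of size }M_2)$, so $|\xi_3|$ is within $M_2\le 2^{-9}N$ of $|\xi_1|\sim N$, giving $N_3\sim N$ as well. Combining, $N_1\sim N_2\sim N_3\sim M_{max}\sim N$, which is the claim.

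I do not expect a serious obstacle here; the only points requiring care are the bookkeeping of which of $M_1,M_2,M_3$ is the maximum (handled by symmetry of the three pairwise sums $\xi_i+\xi_j$) and making sure the implied constants in "$\sim$" absorb the factor $2^{-9}$ — that is precisely why the threshold is taken as small as $2^{-9}N$ rather than, say, $N/2$. One should also note that the case $M_{min}$ versus $M_{med}$ distinction between $\mathcal{M}_3^{low}$ and $\mathcal{M}_3^{med}$ plays no role: in both sets $M_{med}\le 2^{-9}N$, and that single bound, applied to the two smallest pairwise sums, is all that drives the argument.
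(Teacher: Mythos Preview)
Your argument is correct and actually a bit slicker than the paper's. One minor slip: you write ``$M_{min}\le N^{-1/2}$'' as a defining inequality of $\mathcal{M}_3^{low}\cup\mathcal{M}_3^{med}$, but that bound holds only on $\mathcal{M}_3^{low}$; on $\mathcal{M}_3^{med}$ one has $M_{min}>N^{-1/2}$. You rightly observe at the end that only $M_{med}\le 2^{-9}N$ is needed, so this does not affect the proof --- just clean up the opening sentence.

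As for the comparison: the paper orders the steps differently. It first gets $N_1\sim N_2\sim N$ directly from $|\xi_2+\xi_3|,|\xi_1+\xi_3|\ll N$ and $|\xi_1+\xi_2+\xi_3|\sim N$, then rules out $N_3\ll N$ and $N_3\gg N$ by contradiction, and finally obtains $M_3\sim N$ via a sign argument (the constraints $|\xi_2+\xi_3|,|\xi_1+\xi_3|\ll N$ force $\xi_2\xi_3<0$ and $\xi_1\xi_3<0$, hence $\xi_1\xi_2>0$ and $|\xi_1+\xi_2|\sim N$). Your route instead starts from the identity $(\xi_1+\xi_2)+(\xi_1+\xi_3)+(\xi_2+\xi_3)=2(\xi_1+\xi_2+\xi_3)$ to pin down $M_{max}\sim N$ immediately, and then reads off the $N_j$ from $|\xi_1-\xi_2|\le M_1+M_2\ll N$ together with $|\xi_1+\xi_2|\sim N$, and $|\xi_3+\xi_1|\ll N$ with $|\xi_1|\sim N$. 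This avoids both the case split for $N_3$ and the sign argument for $M_3$; the paper's version is just as short, but yours makes more transparent that the single hypothesis $M_{med}\le 2^{-9}N$ (common to both $\mathcal{M}_3^{low}$ and $\mathcal{M}_3^{med}$) is all that is used.
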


\begin{proof} Without loss of generality, we can assume that $M_1 \le M_2 \le M_3$. Let $(\xi_1,\xi_2,\xi_3) \in \text{supp} \, \phi_{M_1,M_2,M_3}$. Then, we have $|\xi_2+\xi_3| \ll N$ and $|\xi_1+\xi_3| \ll N$, so that $N_1 \sim N_2 \sim N$ since $|\xi_1+\xi_2+\xi_3| \sim N$.
	
On one hand $N_3 \ll N$ would imply that $M_1 \sim M_2 \sim N$ which is a contradiction. On the other hand, $N_3 \gg N$ would imply that $|\xi_1+\xi_2+\xi_3| \gg N$ which is also a contradiction. Therefore, we must have $N_3 \sim N$.

Finally,  $M_1 \ll N$ implies that $\xi_2 \cdot \xi_3 <0$ and  $M_2 \ll N$ implies $\xi_1 \cdot \xi_3<0$. Thus, $\xi_1 \cdot \xi_2>0$, so that $M_3 \sim N$.	
\end{proof}

For $\eta\in L^\infty$, let us define the trilinear pseudo-product operator $\Pi^3_{\eta,M_1,M_2,M_3}$  in Fourier variables by
\begin{equation} \label{def.pseudoproduct3}
\mathcal{F}_x\big(\Pi^3_{\eta,M_1,M_2,M_3}(u_1,u_2,u_3) \big)(\xi)=\int_{\Gamma^2(\xi)}(\eta\phi_{M_1,M_2,M_3})(\xi_1,\xi_2,\xi_3)\prod_{j=1}^3\widehat{u}_j(\xi_j) \, .
\end{equation}
It is worth noticing that when the functions $u_j$ are real-valued, the Plancherel identity yields
\begin{equation} \label{def.pseudoproduct3b}
\int_{\mathbb R} \Pi^3_{\eta,M_1,M_2,M_3}(u_1,u_2,u_3) \, u_4 \, dx=\int_{\Gamma^3} \big(\eta \phi_{M_1,M_2,M_3}\big)(\xi_1,\xi_2,\xi_3) \prod_{j=1}^4 \widehat{u}_j(\xi_j) \, .
\end{equation}

Finally, we  define the resonance function of order $3$ by
\begin{align}
\Omega^3(\xi_1,\xi_2,\xi_3) &= \xi_1^3+\xi_2^3+\xi_3^3-(\xi_1+\xi_2+\xi_3)^3 \notag\\
&= -3(\xi_1+\xi_2)(\xi_1+\xi_3)(\xi_2+\xi_3) \, .\label{res3}
\end{align}

The following proposition gives suitable estimates for the pseudo-product $\Pi^3_{M_1,M_2,M_3}$ when $(M_1,M_2,M_3) \in \mathcal{M}_3^{high}$.
\begin{proposition} \label{L2trilin}
Let $N_i$, $i=1,\cdots,4,$ and $N$ denote nonhomogeneous dyadic numbers.
Assume that $0<T \le 1$, $\eta$ is a bounded function and $u_i$ are real-valued functions in $Y^0=X^{-1,1} \cap X^{-\frac{7}{8},\frac{15}{16}}\cap \widetilde{L^{\infty}_t}L^2_x$ with  spatial Fourier support in $I_{N_i}$ for $i=1,\cdots 4$.  Assume also that  $N \gg 1$, $(M_1,M_2,M_3)\in \mathcal{M}_3^{high}$ and  $ M_{min} \ge N^{-1} .$
Then
\begin{equation} \label{L2trilin.2}
\Big| \int_{\R \times [0,T]}\Pi^3_{\widetilde{\eta},M_1,M_2,M_3}(u_1,u_2,u_3) \, u_4 \, dxdt \Big| \lesssim N_{max}^{-1} (M_{min}\wedge 1)^{\frac{1}{16}}\prod_{i=1}^4\|u_i\|_{Y^0} \, .
\end{equation}
where $N_{max}=\max\{N_1,N_2,N_3\}$ and $\widetilde{\eta}=\eta \phi_N(\xi_1+\xi_2+\xi_3)$.

Moreover, the implicit constant in estimate \eqref{L2trilin.2} only depends on the $L^{\infty}$-norm of the function $\eta$.
 \end{proposition}

 Before giving the proof of Proposition \ref{L2trilin}, we state some important technical lemmas whose proofs can be found in \cite{MoVe}.
\begin{lemma} \label{QL}
 Let $L$ be a nonhomogeneous dyadic number. Then the operator $Q_{\le L}$ is bounded in $L^{\infty}_tL^2_x$ uniformly in $L$. In other words,
\begin{equation} \label{QL.1}
\|Q_{\le L}u\|_{L^{\infty}_tL^2_x} \lesssim \|u\|_{L^{\infty}_tL^2_x} \, ,
\end{equation}
for all $u \in L^{\infty}_tL^2_x$ and the implicit constant appearing in \eqref{QL.1} does not depend on $L$.
 \end{lemma}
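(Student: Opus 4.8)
The plan is to conjugate $Q_{\le L}$ by the free Airy group, which turns the curved space-time frequency localization into a flat time-frequency cutoff, and then to exploit the unitarity of $U(\cdot)$ on $L^2_x$.

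First I would observe that the nonhomogeneous bumps telescope: for $L=2^j\ge 1$ one has $\sum_{K\le L}\phi_K(\zeta)=\chi(2^{-j}\zeta)=\chi(\zeta/L)$, while $\sum_{K\le 0}\phi_K(\zeta)=\phi_0(\zeta)=\chi(2\zeta)$. Hence $Q_{\le L}$ is the space-time Fourier multiplier with symbol $\chi\big((\tau-\xi^3)/L\big)$ (with the obvious modification when $L=0$). Next I would set $v(x,t)=U(-t)u(\cdot,t)(x)$, so that $\mathcal{F}v(\xi,\tau)=\mathcal{F}u(\xi,\tau+\xi^3)$ and, since $U(\cdot)$ is unitary on $L^2_x$, $\|v(\cdot,t)\|_{L^2_x}=\|u(\cdot,t)\|_{L^2_x}$ for a.e.\ $t$. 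The change of variable $\tau\mapsto\tau+\xi^3$ then gives $U(-t)\big(Q_{\le L}u\big)=\mathcal{F}^{-1}\big(\chi(\tau/L)\,\mathcal{F}v\big)$, i.e.\ the conjugated operator is a multiplier in the time variable alone. In particular $\|Q_{\le L}u\|_{L^\infty_t L^2_x}=\|\mathcal{F}^{-1}(\chi(\tau/L)\mathcal{F}v)\|_{L^\infty_t L^2_x}$, so it suffices to bound this time-multiplier on $L^\infty_t L^2_x$ uniformly in $L$.

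For the last step I would write $\mathcal{F}^{-1}(\chi(\tau/L)\mathcal{F}v)$ as convolution in $t$ against the kernel $g_L(t)=L\,\check\chi(Lt)$, where $\check\chi=\mathcal{F}_t^{-1}\chi\in\mathcal{S}(\mathbb{R})\subset L^1(\mathbb{R})$; thus $\|g_L\|_{L^1_t}=\|\check\chi\|_{L^1_t}$ is a finite constant independent of $L$. Applying Minkowski's integral inequality in $t$ to the scalar map $t\mapsto\|v(\cdot,t)\|_{L^2_x}$ then yields $\|\mathcal{F}^{-1}(\chi(\tau/L)\mathcal{F}v)\|_{L^\infty_t L^2_x}\lesssim\|v\|_{L^\infty_t L^2_x}$. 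Chaining these identities and estimates gives $\|Q_{\le L}u\|_{L^\infty_t L^2_x}\lesssim\|v\|_{L^\infty_t L^2_x}=\|u\|_{L^\infty_t L^2_x}$, which is \eqref{QL.1} with an implicit constant independent of $L$.

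There is no real obstacle here: the lemma is routine once the conjugation is performed. The only points needing a little care are the telescoping identity for the nonhomogeneous Littlewood--Paley bumps (together with the separate, harmless treatment of the degenerate index $L=0$), and keeping the Fourier-transform conventions straight in the conjugation by $U(-t)$.
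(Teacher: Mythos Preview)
Your proof is correct and follows essentially the same route as the paper: both arguments conjugate $Q_{\le L}$ by the Airy group to reduce to a pure time-multiplier (the paper writes this as $Q_{\le L}u=e^{itw(D)}R_{\le L}e^{-itw(D)}u$), then use unitarity of $U(t)$ on $L^2_x$ together with the fact that the resulting time-convolution kernel has $L^1$-norm equal to $\|\check\chi\|_{L^1}$, independent of $L$. Your explicit telescoping of the nonhomogeneous bumps and separate mention of the $L=0$ case are extra details the paper leaves implicit, but the underlying mechanism is identical.
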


\begin{proof}
See Lemma 2.3 in \cite{MoVe}.
\end{proof}

For any $0<T \le 1$, let us denote by $1_T$ the characteristic function of the interval $[0,T]$. One of the main difficulty in the proof of Proposition \ref{L2trilin} is that the operator of multiplication by $1_T$ does not commute with $Q_L$. To handle this situation, we follow the arguments introduced in \cite{MoVe} and use the decomposition
\begin{equation} \label{1T}
1_T=1_{T,R}^{low}+1_{T,R}^{high}, \quad \text{with} \quad \mathcal{F}_t\big(1_{T,R}^{low} \big)(\tau)=\chi(\tau/R)\mathcal{F}_t\big(1_{T} \big)(\tau) \, ,
\end{equation}
for some $R>0$ to be fixed later.
\begin{lemma}\label{ihigh-lem} For any $ R>0 $ and $ T>0 $ it holds
\begin{equation}\label{high}
\|1_{T,R}^{high}\|_{L^1}\lesssim T\wedge R^{-1} \, ,
\end{equation}
and
\begin{equation}\label{high2}
\| 1_{T,R}^{low}\|_{L^\infty}\lesssim 1 \; .
\end{equation}
\end{lemma}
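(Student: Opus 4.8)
The plan is to analyze the two bounds separately, working on the Fourier side in the time variable. Recall that $1_{T,R}^{high} = 1_T - 1_{T,R}^{low}$ and $\mathcal{F}_t(1_{T,R}^{low})(\tau) = \chi(\tau/R)\mathcal{F}_t(1_T)(\tau)$, so that $\mathcal{F}_t(1_{T,R}^{high})(\tau) = \bigl(1-\chi(\tau/R)\bigr)\mathcal{F}_t(1_T)(\tau)$. One computes explicitly $\mathcal{F}_t(1_T)(\tau) = \int_0^T e^{-it\tau}\, dt = \frac{1-e^{-iT\tau}}{i\tau}$, which satisfies the pointwise bounds $|\mathcal{F}_t(1_T)(\tau)| \lesssim T$ and $|\mathcal{F}_t(1_T)(\tau)| \lesssim |\tau|^{-1}$.

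For \eqref{high2}, I would write $1_{T,R}^{low} = 1_T \ast K_R$ where $K_R = \mathcal{F}_t^{-1}(\chi(\cdot/R))$, so that $\|K_R\|_{L^1} = \|\mathcal{F}_t^{-1}(\chi)\|_{L^1}$ is a fixed constant independent of $R$ by scaling. Then Young's inequality gives $\|1_{T,R}^{low}\|_{L^\infty} \le \|1_T\|_{L^\infty}\|K_R\|_{L^1} \lesssim 1$, which is \eqref{high2}.

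For \eqref{high}, I would use the Fourier inversion formula $1_{T,R}^{high}(t) = \frac{1}{2\pi}\int_{\mathbb R} e^{it\tau}\bigl(1-\chi(\tau/R)\bigr)\mathcal{F}_t(1_T)(\tau)\, d\tau$. Since $1-\chi(\tau/R)$ is supported in $|\tau| \gtrsim R$, on this region we may use $|\mathcal{F}_t(1_T)(\tau)| \lesssim |\tau|^{-1}$. However, $|\tau|^{-1}$ is not integrable at infinity, so a direct $L^1_\tau$ estimate fails; instead I would integrate by parts in $\tau$ (using $e^{it\tau} = \frac{1}{it}\partial_\tau e^{it\tau}$) to gain decay in $t$, combined with the crude estimate $\|1_{T,R}^{high}\|_{L^\infty} \le \|1_T\|_{L^\infty} + \|1_{T,R}^{low}\|_{L^\infty} \lesssim 1$ together with $\supp 1_{T,R}^{high} \subset$ a fixed neighborhood of $[0,T]$ handled via the tail decay. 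A cleaner route: split $\|1_{T,R}^{high}\|_{L^1} \le \|1_T - 1_{T,R}^{low}\|_{L^1(|t|\le 1)} + \|1_{T,R}^{low}\|_{L^1(|t|>1)}$ — actually the most efficient argument is to note $\|1_{T,R}^{high}\|_{L^1} \le \min\bigl(\|1_T\|_{L^1} + \|1_{T,R}^{low}\|_{L^1},\ \text{(decay bound)}\bigr)$; the first term gives $\lesssim T$ since $\|1_{T,R}^{low}\|_{L^1} \le \|1_T\|_{L^1}\|K_R\|_{L^1} \lesssim T$, while for the $R^{-1}$ bound one writes $1_{T,R}^{high} = \mathcal{F}_t^{-1}\bigl((1-\chi(\cdot/R))\mathcal{F}_t 1_T\bigr)$ and estimates using $|\mathcal F_t(1_T)(\tau)|\lesssim |\tau|^{-1}$ on $|\tau|\gtrsim R$ after one integration by parts to make the resulting integral converge, producing a factor $R^{-1}$.

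The main obstacle is precisely obtaining the $R^{-1}$ bound in \eqref{high}, since the naive $L^1_\tau$ bound on the Fourier transform diverges logarithmically at high frequency; the resolution is the integration-by-parts (or equivalently, exploiting cancellation via the smoothness of $1-\chi$ and the explicit form of $\mathcal{F}_t(1_T)$) to convert the $|\tau|^{-1}$ decay into genuine $L^1_t$ control with the correct power of $R$. The bound $T \wedge R^{-1}$ then follows by taking the minimum of the two estimates. I expect the whole argument to be short once the explicit formula for $\mathcal{F}_t(1_T)$ is in hand.
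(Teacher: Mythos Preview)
Your argument for \eqref{high2} via Young's inequality is correct and matches the paper's. Your triangle-inequality argument for the bound $\|1_{T,R}^{high}\|_{L^1}\lesssim T$ is also fine.

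The gap is in the $R^{-1}$ bound. Integration by parts in $\tau$ via $e^{it\tau}=\frac{1}{it}\partial_\tau e^{it\tau}$ does not work as stated: differentiating $g(\tau)=(1-\chi(\tau/R))\,\mathcal{F}_t(1_T)(\tau)$ produces, among harmless terms, the contribution $(1-\chi(\tau/R))\cdot T e^{-iT\tau}/\tau$, whose modulus is $\sim T/|\tau|$ on $|\tau|\gtrsim R$ and hence is \emph{not} in $L^1_\tau$. Thus one integration by parts does not yield a finite bound $|t|^{-1}\|g'\|_{L^1_\tau}$, and iterating only replaces this term by $T^k e^{-iT\tau}/\tau$, which is no better. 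Your fallback remark that $1_{T,R}^{high}$ is supported in a fixed neighbourhood of $[0,T]$ is also incorrect: $1_{T,R}^{low}=1_T\ast K_R$ is the convolution of $1_T$ with a Schwartz function and has full support, so $1_{T,R}^{high}$ is not compactly supported, and its tail decay must be quantified in $R$ --- which is exactly the point at issue.

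The paper avoids the Fourier side entirely for \eqref{high}. Using $\int \mathcal{F}_t^{-1}(\chi)=\chi(0)=1$, one writes on the physical side
\[
1_{T,R}^{high}(t)=\int_{\mathbb R}\bigl(1_T(t)-1_T(t-s/R)\bigr)\,\mathcal{F}_t^{-1}(\chi)(s)\,ds,
\]
and then Fubini together with the elementary identity $\|1_{[0,T]}-1_{[a,T+a]}\|_{L^1_t}\le 2\min(T,|a|)$ gives
\[
\|1_{T,R}^{high}\|_{L^1}\lesssim \int_{\mathbb R}\min(T,|s|/R)\,|\mathcal{F}_t^{-1}(\chi)(s)|\,ds\lesssim T\wedge R^{-1},
\]
using only that $\mathcal{F}_t^{-1}(\chi)$ is Schwartz. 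This delivers both bounds at once in two lines, with no oscillatory-integral analysis needed.
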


\begin{proof}  See Lemma 2.4 in \cite{MoVe}.
\end{proof}

\begin{lemma}\label{ilow-lem}
Assume that $T>0$, $R>0$  and $ L \gg R $. Then, it holds
\begin{equation} \label{ihigh-lem.1}
\|Q_L (1_{T,R}^{low}u)\|_{L^2_{x,t}}\lesssim \|Q_{\sim L} u\|_{L^2_{x,t}} \, ,
\end{equation}
for all $u \in L^2(\mathbb R^2)$.
\end{lemma}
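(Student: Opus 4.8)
The plan is to exploit the fact that multiplication by $1_{T,R}^{low}$, which depends only on the time variable, acts on the space-time Fourier side as a convolution in the $\tau$-variable with a kernel supported near the origin, whereas $Q_L$ localizes the modulation variable $\tau-\xi^3$ to size $L\gg R$; a convolution by a kernel of width $O(R)$ cannot move mass between widely separated modulation shells. Concretely, since $1_{T,R}^{low}=1_{T,R}^{low}(t)$,
\[
\mathcal{F}\big(1_{T,R}^{low}u\big)(\xi,\tau)=\big(\mathcal{F}_t(1_{T,R}^{low})\ast_\tau \mathcal{F}(u)(\xi,\cdot)\big)(\tau),
\]
and by the definition \eqref{1T} together with $\operatorname{supp}\chi\subset[-2,2]$, the kernel $\mathcal{F}_t(1_{T,R}^{low})$ is supported in $\{|\tau|\le 2R\}$.

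First I would fix a fattened projector, say $Q_{\sim L}:=\sum_{L/8\le L'\le 8L}Q_{L'}$ (any fixed enlargement of the modulation shell works), and claim that $Q_L(1_{T,R}^{low}u)=Q_L\big(1_{T,R}^{low}\,Q_{\sim L}u\big)$. Indeed, $\mathcal{F}\big(Q_L(1_{T,R}^{low}u)\big)(\xi,\tau)=\psi_L(\xi,\tau)\int \mathcal{F}_t(1_{T,R}^{low})(\tau-\tau')\,\mathcal{F}(u)(\xi,\tau')\,d\tau'$ is nonzero only if $|\tau-\xi^3|\sim L$; on the support of the integrand one also has $|\tau-\tau'|\le 2R$, and since $L\gg R$ this forces $L/4\le|\tau'-\xi^3|\le 4L$, a region on which $\sum_{L/8\le L'\le 8L}\psi_{L'}(\xi,\tau')=1$, so replacing $\mathcal{F}(u)$ by $\mathcal{F}(Q_{\sim L}u)$ changes nothing. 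This is the only place where the hypothesis $L\gg R$ is used, and verifying that the $O(R)$ shift stays inside the enlarged shell is the sole slightly delicate point; everything else is bookkeeping. A clean alternative that bypasses even this is to conjugate by $e^{itw(D)}$ as in the proof of Lemma \ref{QL}: since $1_{T,R}^{low}$ is a function of $t$ alone it commutes with $e^{itw(D)}$, whence $Q_L(1_{T,R}^{low}u)=e^{itw(D)}R_L\big(1_{T,R}^{low}e^{-itw(D)}u\big)$, turning the support separation into a one-variable statement about temporal frequencies.

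Finally I would conclude by combining three elementary facts: $Q_L$ is an $L^2(\mathbb R^2)$-Fourier multiplier with symbol bounded by $1$, so $\|Q_Lv\|_{L^2}\le\|v\|_{L^2}$; the pointwise-in-time multiplier $1_{T,R}^{low}$ satisfies $\|1_{T,R}^{low}\|_{L^\infty}\lesssim 1$ by \eqref{high2} of Lemma \ref{ihigh-lem}; hence
\[
\|Q_L(1_{T,R}^{low}u)\|_{L^2}=\|Q_L(1_{T,R}^{low}Q_{\sim L}u)\|_{L^2}\le\|1_{T,R}^{low}Q_{\sim L}u\|_{L^2}\le\|1_{T,R}^{low}\|_{L^\infty}\|Q_{\sim L}u\|_{L^2}\lesssim\|Q_{\sim L}u\|_{L^2},
\]
which is exactly \eqref{ihigh-lem.1}. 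Thus the main (and really only) obstacle is the Fourier-support bookkeeping around the $O(R)$ shift; no harmonic analysis beyond Plancherel and the already established bounds on $1_{T,R}^{low}$ is needed.
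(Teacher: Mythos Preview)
Your proof is correct and follows essentially the same approach as the paper: both arguments exploit that $\mathcal{F}_t(1_{T,R}^{low})$ is supported in $\{|\tau|\le 2R\}$, so convolution in $\tau$ cannot move mass between modulation shells separated by $L\gg R$, and both then conclude using the $L^2$-boundedness of $Q_L$ together with $\|1_{T,R}^{low}\|_{L^\infty}\lesssim 1$. The only cosmetic difference is that the paper dyadically decomposes $u=\sum_{L_1}Q_{L_1}u$ and shows the terms with $L_1\not\sim L$ vanish, whereas you phrase the same support constraint as the identity $Q_L(1_{T,R}^{low}u)=Q_L(1_{T,R}^{low}Q_{\sim L}u)$.
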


\begin{proof}
See Lemma 2.5 in \cite{MoVe}.
\end{proof}

\begin{proof}[Proof of Proposition \ref{L2trilin}] Given $u_i$, $1 \le i \le 4$, satisfying the hypotheses of Proposition \ref{L2trilin}, let $G_{M_1,M_2,M_3}^3=G_{M_1,M_2,M_3}^3(u_1,u_2,u_3,u_4)$ denote the left-hand side of \eqref{L2trilin.2}. We use the decomposition in \eqref{1T} and obtain that
\begin{equation} \label{L2trilin.4}
G_{M_1,M_2,M_3}^3=G_{M_1,M_2,M_3,R}^{3,low}+G_{M_1,M_2,M_3,R}^{3,high} \, ,
\end{equation}
where
\begin{displaymath}
G_{M_1,M_2,M_3,R}^{3,low}=\int_{\mathbb R^2}1^{low}_{T,R}\,\Pi_{\eta,M_1,M_2,M_3}^3(u_1,u_2,u_3) u_4 \, dxdt
\end{displaymath}
and
\begin{displaymath}
G_{M_1,M_2,M_3,R}^{3,high}=\int_{\mathbb R^2}1^{high}_{T,R}\,\Pi_{\eta,M_1,M_2,M_3}^3(u_1,u_2,u_3)u_4 \, dxdt \, .
\end{displaymath}

We deduce from H\"older's inequality in time, \eqref{prod4-est.1}, \eqref{def.pseudoproduct3b} and \eqref{high} that
\begin{displaymath}
\begin{split}
\big|G_{M_1,M_2,M_3,R}^{3,high}\big| & \le \|1_{T,R}^{high}\|_{L^1}\big\|\int_{\mathbb R}\Pi_{\eta,M_1,M_2,M_3}^3(u_1,u_2,u_3)u_4 \, dx\big\|_{L^{\infty}_t}
\\ & \lesssim  R^{-1}M_{min}\prod_{i=1}^4\|u_i\|_{L^{\infty}_tL^2_x} \, ,
\end{split}
\end{displaymath}
which implies that
\begin{equation} \label{L2trilin.5}
\big|G_{M_1,M_2,M_3,R}^{3,high}\big| \lesssim N_{max}^{-1}(M_{min}\wedge 1)^{\frac{1}{16}}\prod_{i=1}^4\|u_i\|_{L^{\infty}_tL^2_x}
\end{equation}
if we choose $R=M_{min}  (M_{min}\wedge 1)^{-\frac{1}{16}}N_{max}$.

To deal with the term $G_{M_1,M_2,M_3,R}^{3,low}$, we decompose with respect to the modulation variables. Thus,
\begin{displaymath}
G_{M,R}^{3,low}=\sum_{L_1,L_2,L_3,L_4}\int_{\mathbb R^2}\Pi_{\eta,M_1,M_2,M_3}^3(Q_{L_1}(1^{low}_{T,R}u_1),Q_{L_2}u_2,Q_{L_3}u_3)Q_{L_4}u_4 \, dxdt \, .
\end{displaymath}
Moreover, observe from the resonance relation in \eqref{res3} and the hypothesis $(M_1,M_2,M_3) \in \mathcal{M}_3^{high}$ that
\begin{equation} \label{L2trilin5b}
L_{max} \gtrsim M_{min}N_{max}^2 \, ,
\end{equation}
where $L_{max}=\max \{L_1,L_2,L_3,L_4 \}$.
In the case where $N_{max} \sim N$, \eqref{L2trilin5b} is clear from the definition of $\mathcal{M}^{high}$. In the case where $N_{max} \sim N_{med} \gg N$,  we claim  that $M_{max} \sim M_{med} \gtrsim N_{max}$. Indeed, denote $\{\xi_1,\xi_2,\xi_3 \}=\{\xi_{max},\xi_{med},\xi_{min} \}$, where $ |\xi_{min}| \le |\xi_{med}| \le |\xi_{max}|$. Then we compute, using also the hypothesis $|\xi_4| \sim N$,
$|\xi_{max}+\xi_{min}|=|\xi_4-\xi_{med}| \sim N_{med}$ and $|\xi_{med}+\xi_{min}| =|\xi_4-\xi_{max}| \sim N_{max}$, which proves the claim.

In particular, \eqref{L2trilin5b} implies that
$$L_{max} \gg R=M_{min}  (M_{min}\wedge 1)^{-\frac{1}{16}}N_{max},
$$
 since $N_{max} \gg 1$ and $ M_{min} \ge N^{-1} \gtrsim N_{max}^{-1} $.

In the case where $L_{max}=L_1$, we deduce from \eqref{prod4-est}, \eqref{def.pseudoproduct3b}, \eqref{QL.1}, \eqref{ihigh-lem.1} and \eqref{L2trilin5b} that
\begin{displaymath}
\begin{split}
\big| G_{M_1,M_2,M_3,R}^{3,low} \big| &\lesssim \sum_{L_1 \gtrsim M_{min}N_{max}^2}M_{min}\|Q_{L_1}(1^{low}_{T,R}u_1)\|_{L^2_{x,t}}\prod_{i=2}^4\|Q_{\le L_i}u_i\|_{L^{\infty}_tL^2_x} \\ & \lesssim N_{max}^{-1}(1 \wedge M_{min}^\frac{1}{16})(\|u_1\|_{X^{-1,1}}+\|u_1\|_{X^{-\frac{7}{8},\frac{15}{16}}})\prod_{i=2}^{4}\|u_i\|_{L^{\infty}_tL^2_x}  \, ,
\end{split}
\end{displaymath}
which implies that
\begin{equation} \label{L2trilin.6}
\big| G_{M_1,M_2,M_3,R}^{3,low} \big| \lesssim N_{max}^{-1}(1 \wedge M_{min}^\frac{1}{16})\prod_{i=1}^4\|u_i\|_{Y^0} \, .
\end{equation}

We can prove arguing similarly that \eqref{L2trilin.6} still holds true in all the other cases, \textit{i.e.} $L_{max}=L_2, \ L_3$ or $L$. Note that for those cases  we do not have to use \eqref{high} but we only need \eqref{high2}.
Therefore, we conclude the proof of estimate \eqref{L2trilin.2} gathering \eqref{L2trilin.4}, \eqref{L2trilin.5} and \eqref{L2trilin.6}.
\end{proof}

\subsection{$L^2$ 5-linear estimates}
\begin{lemma}\label{prod6-est}
 Let $f_j\in L^2(\R)$, $j=1,...,6$ and $M_1,M_4 \in \mathbb D$. Then it holds that
\begin{equation}\label{prod6.est.1}
\int_{\Gamma^5} \phi_{M_1}(\xi_2+\xi_3)\phi_{M_4}(\xi_5+\xi_6) \prod_{j=1}^6|f_j(\xi_j)| \lesssim M_1M_4 \prod_{j=1}^6 \|f_j\|_{L^2}.
\end{equation}
If moreover $f_j$ are localized in an annulus $\{|\xi|\sim N_j\}$ for $j=5,6$, then
\begin{equation} \label{prod6.est.2}
\int_{\Gamma^5} \phi_{M_1}(\xi_2+\xi_3)\phi_{M_4}(\xi_5+\xi_6) \prod_{i=1}^6|f_i(\xi_i)| \lesssim M_1M_4^{\frac12}N_5^{\frac14}N_6^{\frac14} \prod_{i=1}^6 \|f_i\|_{L^2} \, .
\end{equation}
\end{lemma}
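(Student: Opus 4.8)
\textbf{Proof proposal for Lemma \ref{prod6-est}.}

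The plan is to reduce both inequalities to the three-linear building block of Lemma \ref{prod4-est} by freezing the frequencies of one of the two ``bubbles'' and integrating the other out. For \eqref{prod6.est.1}, I would first assume without loss of generality that all $f_j\ge 0$. Write the integration over $\Gamma^5$ as an iterated integral: fix $\xi_5$ and $\xi_6$ (hence $\xi_4=\xi-(\xi_1+\xi_2+\xi_3+\xi_5+\xi_6)$ on $\Gamma^5=\Gamma^5(0)$ is determined by $\xi_1,\xi_2,\xi_3$ once $\xi_5,\xi_6$ are frozen) and observe that the remaining integral in $(\xi_1,\xi_2,\xi_3)$ is exactly of the form controlled by Lemma \ref{prod4-est}, namely
\[
\int \phi_{M_1}(\xi_2+\xi_3)\,f_1(\xi_1)f_2(\xi_2)f_3(\xi_3)f_4\big(\sigma-(\xi_1+\xi_2+\xi_3)\big)\,d\xi_1 d\xi_2 d\xi_3 \lesssim M_1\,\|f_1\|_{L^2}\|f_2\|_{L^2}\|f_3\|_{L^2}\|f_4\|_{L^2},
\]
where $\sigma=-(\xi_5+\xi_6)$; here I use that the bound in Lemma \ref{prod4-est} is translation invariant in the total frequency and that $\phi_{M_1}(\xi_2+\xi_3)$ fits the role of $\phi_M(\xi_1+\xi_2)$ after relabelling. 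This leaves
\[
\int_{\mathbb R^2}\phi_{M_4}(\xi_5+\xi_6)\,f_5(\xi_5)f_6(\xi_6)\,\Big(M_1\prod_{j=1}^4\|f_j\|_{L^2}\Big)\,d\xi_5 d\xi_6,
\]
and the $(\xi_5,\xi_6)$ integral is handled exactly as in \eqref{prod4-est.3}--\eqref{prod4-est.4}: writing it as $\int\phi_{M_4}(\eta)(f_5\ast f_6)(\eta)\,d\eta\lesssim M_4\|f_5\ast f_6\|_{L^\infty}\lesssim M_4\|f_5\|_{L^2}\|f_6\|_{L^2}$. Multiplying the two gains $M_1$ and $M_4$ yields \eqref{prod6.est.1}.

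For the refined bound \eqref{prod6.est.2}, the only change is in how the $(\xi_5,\xi_6)$ integral is estimated: instead of spending the full factor $M_4$ we keep only $M_4^{1/2}$ from the $\phi_{M_4}$ localization and recover $N_5^{1/4}N_6^{1/4}$ from the extra annular localization $|\xi_5|\sim N_5$, $|\xi_6|\sim N_6$. Concretely, after the same freezing argument one is left to bound
\[
\int_{\mathbb R^2}\phi_{M_4}(\xi_5+\xi_6)\,\mathbf 1_{\{|\xi_5|\sim N_5\}}(\xi_5)\,\mathbf 1_{\{|\xi_6|\sim N_6\}}(\xi_6)\,f_5(\xi_5)f_6(\xi_6)\,d\xi_5 d\xi_6 \lesssim M_4^{1/2}N_5^{1/4}N_6^{1/4}\|f_5\|_{L^2}\|f_6\|_{L^2}.
\]
To see this, apply Cauchy--Schwarz in $(\xi_5,\xi_6)$ against the characteristic function of the set $S=\{(\xi_5,\xi_6):\ |\xi_5+\xi_6|\lesssim M_4,\ |\xi_5|\sim N_5,\ |\xi_6|\sim N_6\}$, so the left side is $\le |S|^{1/2}\|f_5\|_{L^2}\|f_6\|_{L^2}$; and $|S|\lesssim M_4\cdot\min(N_5,N_6)\lesssim M_4\cdot N_5^{1/2}N_6^{1/2}$ after bounding the ``thin'' direction by $M_4$ and the ``long'' direction by $\min(N_5,N_6)\le (N_5 N_6)^{1/2}$. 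Hence $|S|^{1/2}\lesssim M_4^{1/2}N_5^{1/4}N_6^{1/4}$, which is the claim; combining with the $M_1\prod_{j=1}^4\|f_j\|_{L^2}$ factor from the first bubble gives \eqref{prod6.est.2}.

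The main technical point to get right is the bookkeeping of the measure on $\Gamma^5$ when the two disjoint pairs $\{2,3\}$ and $\{5,6\}$ are treated separately: one must check that after fixing $(\xi_5,\xi_6)$ the residual integral genuinely has the shape of Lemma \ref{prod4-est} with the correct free variable carrying the leftover frequency, and that no constant depends on the frozen variables (it does not, by translation invariance of all the estimates used). Everything else is routine application of Young's/Cauchy--Schwarz inequalities, so I expect no serious obstacle; the refined estimate \eqref{prod6.est.2} is where a little care with the geometry of the set $S$ pays off, but the volume count $|S|\lesssim M_4\min(N_5,N_6)$ is elementary.
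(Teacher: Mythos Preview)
Your proof of \eqref{prod6.est.1} is correct and essentially matches the paper's: both factor the integral into the two ``bubbles'' $\mathcal{J}_{M_1}(f_2,f_3)$ and $\mathcal{J}_{M_4}(f_5,f_6)$ together with a Cauchy--Schwarz bound on the remaining $(f_1,f_4)$ pairing, only the order of operations differs slightly.

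Your argument for \eqref{prod6.est.2} is also correct but takes a different route from the paper. The paper does not estimate the $(\xi_5,\xi_6)$ integral directly by the volume count $|S|\lesssim M_4\min(N_5,N_6)$; instead it first drops the factor $\phi_{M_4}$ entirely and uses only the annular localization to get $\mathcal{J}_{M_4}(f_5,f_6)\le(\int f_5)(\int f_6)\lesssim N_5^{1/2}N_6^{1/2}\|f_5\|_{L^2}\|f_6\|_{L^2}$, which yields the auxiliary bound $\mathcal{I}^5\lesssim M_1 N_5^{1/2}N_6^{1/2}\prod_i\|f_i\|_{L^2}$, and then \emph{interpolates} this with \eqref{prod6.est.1} to obtain the mixed exponent $M_4^{1/2}N_5^{1/4}N_6^{1/4}$. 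Your direct Cauchy--Schwarz plus volume count is a valid and slightly more elementary alternative that reaches the same endpoint in one step; the interpolation approach, on the other hand, makes transparent that \eqref{prod6.est.2} is exactly the geometric mean of the two extreme estimates.
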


\begin{proof}
Let us denote by $\mathcal{I}^5=\mathcal{I}^5(f_1,\cdots,f_6)$ the integral on the right-hand side of \eqref{prod6.est.1}. We can assume without loss of generality that $f_j \ge 0$, $j=1,\cdots,6$. We have by using the notation in \eqref{prod4-est.4} that
\begin{equation} \label{prod6.est.3}
\mathcal{I}^5\le \mathcal{J}_{M_1}(f_2,f_3) \times \mathcal{J}_{M_4}(f_5,f_6) \times
\sup_{\xi_2,\xi_3,\xi_5,\xi_6} \int_{\mathbb R} f_1(\xi_1)f_4(-\sum_{\genfrac{}{}{0pt}{}{j=1}{ j\neq 4}}^6\xi_j) \, d\xi_1 \, .
\end{equation}
Thus, estimate \eqref{prod6.est.1} follows applying \eqref{prod4-est.4} and the Cauchy-Schwarz inequality to \eqref{prod6.est.3}.

Assuming furthermore that $f_j$ are localized in an annulus $\{|\xi| \sim N_j\}$ for $j=5,6$, then we get arguing as above that
\begin{equation} \label{prod6.est.4}
\mathcal{I}^5\le  M_1\times\mathcal{J}_{M_4}(f_5,f_6) \times
 \prod_{j=1}^4\|f_j\|_{L^2}\, .
\end{equation}
From the Cauchy-Schwarz inequality
\begin{equation*}
\mathcal{J}_{M_4}(f_5,f_6) \le \Big( \int_{\mathbb R}f_5(\xi_5)d\xi_5\Big) \times \Big( \int_{\mathbb R} f_6(\xi_6)d\xi_6\Big) \lesssim N_5^{\frac12}N_6^{\frac12}\|f_5\|_{L^2}\|f_6\|_{L^2} \, ,
\end{equation*}
which together with \eqref{prod6.est.4} yields
\begin{equation} \label{prod6.est.6}
\mathcal{I}^5\lesssim M_1N_5^{\frac12}N_6^{\frac12} \prod_{i=1}^6 \|f_i\|_{L^2} \, .
\end{equation}
Therefore, we conclude the proof of \eqref{prod6.est.2} interpolating \eqref{prod6.est.1} and \eqref{prod6.est.6}.
\end{proof}

For a fixed $N\ge 1$ dyadic, we introduce the following subsets of $\D^6$:
\begin{align*}
  \mathcal{M}_5^{low} &= \big\{(M_1,...,M_6)\in\D^6 \, :  \, (M_1,M_2,M_3)\in \mathcal{M}_3^{med},  \ \\ &\quad  \quad \quad M_{min(5)}\le 2^9  M_{med(3)}
   \textrm{ and } \ M_{med(5)}\le 2^{-9}N \big\},\\
  \mathcal{M}_5^{med} &= \big\{(M_1,...,M_6)\in\D^6 \, : \, (M_1,M_2,M_3)\in \mathcal{M}_3^{med} \ \text{and}   \\
  & \quad \quad \quad 2^9 M_{med(3)} <M_{min(5)} \le M_{med(5)} \le 2^{-9}N
  \big\},\\
  \mathcal{M}_5^{high} &= \big\{(M_1,...,M_6)\in\D^6 \, : \, (M_1,M_2,M_3)\in \mathcal{M}_3^{med} \ \text{and} \  2^{-9}N < M_{med(5)}\big\} \, ,
\end{align*}
where $M_{max(3)} \ge M_{med(3)} \ge M_{min(3)}$, respectively $M_{max(5)} \ge M_{med(5)} \ge M_{min(5)}$, denote the maximum, sub-maximum and minimum of $\{M_1, M_2, M_3\}$, respectively $\{M_4,M_5,M_6\}$.
We will also denote by $\phi_{M_1,...,M_6}$ the function defined on $\R^6$ by
\begin{equation*}
\phi_{M_1,...,M_6}(\xi_1,...,\xi_6) = \phi_{M_1,M_2,M_3}(\xi_1,\xi_2,\xi_3) \phi_{M_4,M_5,M_6}(\xi_4,\xi_5,\xi_6)\, .
\end{equation*}
For $\eta\in L^\infty$, let us define the operator $\Pi^5_{\eta,M_1,...,M_6}$ in Fourier variables by
\begin{equation} \label{def.pseudoproduct6}
\mathcal{F}_x\big(\Pi^5_{\eta,M_1,...,M_6}(u_1,...,u_5) \big)(\xi)=\int_{\Gamma^4(\xi)}(\eta\phi_{M_1,...,M_6})(\xi_1,...,\xi_5,-\sum_{j=1}^5\xi_j) \prod_{j=1}^5 \widehat{u}_j(\xi_j) \, .
\end{equation}
Observe that, if the functions $u_j$ are real valued, the Plancherel identity yields
\begin{equation} \label{def.pseudoproduct6b}
\int_{\mathbb R}\Pi^5_{\eta,M_1,...,M_6}(u_1,...,u_5)\, u_6 \, dx=\int_{\Gamma^5}(\eta\phi_{M_1,...,M_6})\prod_{j=1}^6 \widehat{u}_j(\xi_j) \, .
\end{equation}

Finally, we  define the resonance function of order $5$ for $\vec{\xi}_{(5)}=(\xi_1,\cdots,\xi_6) \in \Gamma^5$ by
\begin{equation} \label{res5}
\Omega^5(\vec{\xi}_{(5)}) = \xi_1^3+\xi_2^3+\xi_3^3+\xi_4^3+\xi_5^3+\xi_6^3 \; .
\end{equation}
It is worth noticing that a direct calculus leads to
\begin{equation}\label{res55}
\Omega^5(\vec{\xi}_{(5)}) = \Omega^3(\xi_1,\xi_2,\xi_3) + \Omega^3(\xi_4,\xi_5,\xi_6) \, .
\end{equation}

\medskip

The following proposition gives suitable estimates for the pseudo-product $\Pi^5_{M_1,\cdots,M_6}$ when $(M_1,\cdots,M_6) \in \mathcal{M}^{high}_5$ in the non resonant case $M_1M_2M_3\not\sim M_4M_5M_6$.
\begin{proposition} \label{L25lin}
Let $N_i$, $i=1,\cdots,6$ and $N$ denote nonhomogeneous dyadic numbers.
Assume that $0<T \le 1$, $\eta$ is a bounded function and $u_i$ are functions in $X^{-1,1} \cap L^{\infty}_tL^2_x$ with  spatial Fourier support in $I_{N_i}$ for $i=1,\cdots 6$.
If $N \gg 1$ and $(M_1,...,M_6)\in \mathcal{M}_5^{high}$ satisfies the non resonance assumption $M_1M_2M_3\not\sim M_4M_5M_6$, then
\begin{equation} \label{L25lin.2}
\big| \int_{\R \times [0,T]}\Pi^5_{\widetilde{\eta},M_1,...,M_6}(u_1,...,u_5)\, u_6 \, dxdt\big| \lesssim M_{min(3)}N_{max(5)}^{-1} \prod_{i=1}^6(\|u_i\|_{X^{-1,1}}+\|u_i\|_{L^\infty_t L^2_x}) \, ,
\end{equation}
where $N_{max(5)}=\max\{N_4,N_5,N_6\}$ and $\widetilde{\eta}=\eta \phi_N(\xi_1+\xi_2+\xi_3)$.

Moreover, the implicit constant in estimate \eqref{L25lin.2} only depends on the $L^{\infty}$-norm of the function $\eta$.
\end{proposition}

 \begin{proof}
The proof is similar to the proof of Proposition \ref{L2trilin}. We may always assume $M_1\le M_2\le M_3$ and $M_4\le M_5\le M_6$.

Since $|\xi_1+\xi_2+\xi_3|=|\xi_4+\xi_5+\xi_6| \sim N$ and $(M_1,\cdots,M_6) \in \mathcal{M}_5^{high}$, we get from Lemma \ref{teclemma} that $N_1 \sim N_2 \sim N_3 \sim N$, so that $N_{max(5)} \sim \max\{N_1,\cdots,N_6\}$. Moreover, it follows arguing as in the proof of \eqref{L2trilin5b} that $M_4M_5M_6 \gtrsim M_4N_{max(5)}^2$.
Hence, we deduce from identities \eqref{res55} and \eqref{res3} and the non resonance assumption  that
\begin{equation} \label{L25lin.20}
L_{max} \gtrsim \max(M_1M_2M_3, M_4M_5M_6) \gtrsim M_4M_5M_6 \gtrsim M_{4}N_{max(5)}^2 \, .
\end{equation}
Estimate \eqref{L25lin.2} follows then from estimates \eqref{L25lin.20} and \eqref{prod6.est.1} arguing as in the proof of Proposition \ref{L2trilin}.
\end{proof}

\subsection{$L^2$ 7-linear estimates}
 \begin{lemma}\label{prod8-est}
  Let $f_i\in L^2(\R)$, $i=1,...,8$ and $M_1,  \, M_4, \, M_6$ and $M_7 \in \mathbb D$. Then it holds that
\begin{equation}\label{prod8-est.1}
\int_{\Gamma^7} \phi_{M_1}(\xi_2+\xi_3) \phi_{M_6}(\xi_4+\xi_5)\phi_{M_7}(\xi_7+\xi_8) \prod_{i=1}^8|f_i(\xi_i)| \lesssim M_1M_6M_7 \prod_{i=1}^8 \|f_i\|_{L^2} \,
\end{equation}
and
\begin{equation} \label{prod8-est.100}
\int_{\Gamma^7} \phi_{M_1}(\xi_2+\xi_3) \phi_{M_4}(\sum_{j=1}^4\xi_j)\phi_{M_7}(\xi_7+\xi_8) \prod_{i=1}^8|f_i(\xi_i)| \lesssim M_1M_4M_7 \prod_{i=1}^8 \|f_i\|_{L^2} \, .
\end{equation}

If moreover $f_j$ is localized in an annulus $\{|\xi|\sim N_j\}$ for $j=7, \, 8$, then
\begin{equation} \label{prod8-est.0b}
\begin{split}
 \int_{\Gamma^7} \phi_{M_1}(\xi_2+\xi_3)\phi_{M_6}(\xi_4+\xi_5) &\phi_{M_7}(\xi_7+\xi_8) \prod_{i=1}^8|f_i(\xi_i)| \\ & \lesssim M_1M_6M_7^{\frac12}N_7^{\frac14}N_8^{\frac14} \prod_{i=1}^8 \|f_i\|_{L^2} \, .
\end{split}
\end{equation}
and
\begin{equation} \label{prod8-est.200}
\begin{split}
\int_{\Gamma^7} \phi_{M_1}(\xi_2+\xi_3) \phi_{M_4}(\sum_{j=1}^4\xi_j)&\phi_{M_7}(\xi_7+\xi_8) \prod_{i=1}^8|f_i(\xi_i)| \\ &\lesssim M_1M_4M_7^{\frac12} N_7^{\frac14}N_8^{\frac14} \prod_{i=1}^8 \|f_i\|_{L^2} \, .
\end{split}
\end{equation}
\end{lemma}

\begin{proof}
 Let us denote by $\mathcal{I}^7=\mathcal{I}^7(f_1,\cdots,f_8)$ the integral on the right-hand side of \eqref{prod8-est.1}. We can assume without loss of generality that $f_j \ge 0$, $j=1,\cdots,8$. We have by using the notation in \eqref{prod4-est.4} that
\begin{equation} \label{prod8-est.3}
\mathcal{I}^7\le \mathcal{J}_{M_1}(f_2,f_3) \times \mathcal{J}_{M_6}(f_4,f_5) \times \mathcal{J}_{M_7}(f_7,f_8) \times
\sup_{\xi_2,\xi_3,\xi_4,\xi_5,\xi_7,\xi_8} \int_{\mathbb R} f_1(\xi_1)f_6(-\sum_{\genfrac{}{}{0pt}{}{j=1}{ j\neq 6}}^8\xi_j) \, d\xi_1 \, .
\end{equation}
Thus, estimate \eqref{prod8-est.1} follows applying \eqref{prod4-est.4} and the Cauchy-Schwarz inequality to \eqref{prod8-est.3}.

Assuming furthermore that $f_j$ are localized in an annulus $\{|\xi| \sim N_j\}$ for $j=7,8$, then we get arguing as above that
\begin{equation} \label{prod8-est.4}
\mathcal{I}^7\le  M_1M_6 \times\mathcal{J}_{M_7}(f_7,f_8) \times
 \prod_{j=1}^6\|f_j\|_{L^2}\, .
\end{equation}
From the Cauchy-Schwarz inequality
\begin{equation} \label{prod8-est.5}
\mathcal{J}_{M_7}(f_7,f_8) \le \Big( \int_{\mathbb R}f_7(\xi_7)d\xi_7\Big) \times \Big( \int_{\mathbb R} f_8(\xi_8)d\xi_8\Big) \lesssim N_7^{\frac12}N_8^{\frac12}\|f_7\|_{L^2}\|f_8\|_{L^2} \, ,
\end{equation}
which together with \eqref{prod8-est.4} yields
\begin{equation} \label{prod8-est.6}
\mathcal{I}^7\lesssim M_1M_6N_7^{\frac12}N_8^{\frac12} \prod_{j=1}^8 \|f_j\|_{L^2} \, .
\end{equation}
Therefore, we conclude the proof of \eqref{prod8-est.0b} interpolating \eqref{prod8-est.1} and \eqref{prod8-est.6}.

Now, we prove estimate  \eqref{prod8-est.100}. Let us denote by $\widetilde{\mathcal{I}}^7=\widetilde{\mathcal{I}}^7(f_1,\cdots,f_8)$ the integral on the right-hand side of \eqref{prod8-est.100}. We can assume without loss of generality that $f_j \ge 0$, $j=1,\cdots,8$.

Let us define
\begin{displaymath}
 \mathcal{J}_M(f_1,f_2)(\xi)=\int_{\mathbb R^2}\phi_M(\xi_1+\xi_2+\xi)f_1(\xi_1)f_2(\xi_2)d\xi_1d\xi_2 \, .
\end{displaymath}
Hence, we have, by using the notation in \eqref{prod4-est.3}, $ \mathcal{J}_M(f_1,f_2)(0)= \mathcal{J}_M(f_1,f_2)$. Moreover, it follows from Young's inequality on convolution that
\begin{equation} \label{prod8-est.101}
\sup_{\xi}  \mathcal{J}_M(f_1,f_2)(\xi) = \sup_{\xi} \int_{\mathbb R} \phi_{M}(\xi_1) f_1 \ast f_2(\xi_1-\xi) \, d\xi_1
\lesssim M \|f_1 \ast f_2\|_{L^{\infty}} \lesssim M \|f_1\|_{L^2} \|f_2\|_{L^2}
\end{equation}

By using this notation and the fact that $\sum_{j=1}^4\xi_j=-\sum_{j=5}^8\xi_j$, we have
\begin{equation} \label{prod8-est.102}
\begin{split}
\widetilde{\mathcal{I}}^7\le \mathcal{J}_{M_1}(f_2,f_3) &\times \sup_{\xi_7,\xi_8}\mathcal{J}_{M_4}(f_5,f_6)(\xi_7+\xi_8) \times \mathcal{J}_{M_7}(f_7,f_8)\\ & \times
\sup_{\xi_2,\xi_3,\xi_5,\xi_6,\xi_7,\xi_8} \int_{\mathbb R} f_1(\xi_1)f_4(-\sum_{\genfrac{}{}{0pt}{}{j=1}{ j\neq 4}}^8\xi_j) \, d\xi_1 \, .
\end{split}
\end{equation}
Hence, we conclude the proof of \eqref{prod8-est.100} by applying \eqref{prod4-est.4}, \eqref{prod8-est.101} and the Cauchy-Schwarz inequality to \eqref{prod8-est.102}.

The proof of \eqref{prod8-est.200} follows arguing as above and using \eqref{prod8-est.5} to estimate $\mathcal{J}_{M_7}(f_7,f_8)$.
\end{proof}

\smallskip
For a fixed $N\ge 1$ dyadic, we introduce the following subsets of $\D^9$:
\begin{align*}
  \mathcal{M}_7^{low} &= \big\{(M_1,...,M_9)\in\D^9 \, :  \, (M_1,\cdots,M_6)\in \mathcal{M}_5^{med},  \\ &\quad  \quad \quad \quad \quad \quad \quad \quad M_{min(7)}\le 2^9  M_{med(5)}
   \textrm{ and } \ M_{med(7)}\le 2^{-9}N \big\},\\
  \mathcal{M}_7^{med} &= \big\{(M_1,...,M_9)\in\D^9 \, : \, (M_1,...,M_6)\in \mathcal{M}_5^{med},  \\ &\quad  \quad \quad \quad \quad \quad \quad \quad  2^9 M_{med(5)} <M_{min(7)} \le M_{med(7)} \le 2^{-9}N \big\} \, ,\\
  \mathcal{M}_7^{high} &= \big\{(M_1,...,M_9)\in\D^9 \, : \, (M_1,...,M_6)\in \mathcal{M}_5^{med},  \ 2^{-9}N < M_{med(7)}\big\}
  \end{align*}
where $M_{max(7)} \ge M_{med(7)} \ge M_{min(7)}$ denote respectively the maximum, sub-maximum and minimum of $\{M_7,M_8,M_9\}$.

We will denote by $\phi_{M_1,...,M_9}$ the function defined on $\Gamma_7$ by
\begin{equation} \label{def.pseudoproduct80}
\phi_{M_1,...,M_9}(\xi_1,...,\xi_7,\xi_8) = \phi_{M_1,...,M_6}(\xi_1,...,\xi_5,-\sum_{j=1}^5\xi_j) \, \phi_{M_7,M_8,M_9}(\xi_6,\xi_7,\xi_8) \, .
\end{equation}
For $\eta\in L^\infty$, let us define the operator $\Pi^7_{\eta,M_1,...,M_9}$ in Fourier variables by
\begin{equation} \label{def.pseudoproduct8}
\mathcal{F}_x\big(\Pi^7_{\eta,M_1,...,M_9}(u_1,...,u_7) \big)(\xi)=\int_{\Gamma^6(\xi)}(\eta\phi_{M_1,...,M_9})(\xi_1,...,\xi_7,-\xi) \prod_{j=1}^7 \widehat{u}_j(\xi_j) \, .
\end{equation}
Observe that, if the functions $u_j$ are real valued, the Plancherel identity yields
\begin{equation} \label{def.pseudoproduct8b}
\int_{\mathbb R}\Pi^7_{\eta,M_1,...,M_9}(u_1,...,u_7)\, u_8 \, dx=\int_{\Gamma^7}(\eta\phi_{M_1,...,M_9})\prod_{j=1}^8 \widehat{u}_j(\xi_j) \, .
\end{equation}

We define the resonance function of order $7$ for $\vec{\xi}_{(7)}=(\xi_1,\cdots,\xi_8) \in \Gamma^7$ by
\begin{equation} \label{res7}
\Omega^7(\vec{\xi}_{(7)}) = \sum_{j=1}^8\xi_j^3 \, .
\end{equation}
Again it is direct to check that
\begin{equation} \label{res77}
\Omega^7(\vec{\xi}_{(7)})
=\Omega^5(\xi_1,...,\xi_5, -\sum_{i=1}^5 \xi_i) + \Omega^3(\xi_6,\xi_7,\xi_8) \, .
\end{equation}

The following proposition gives suitable estimates for the pseudo-product $\Pi^7_{M_1,\cdots,M_9}$ when $(M_1,\cdots,M_9) \in \mathcal{M}^{high}_7$ in the nonresonant case $M_4M_5M_6\not\sim M_7M_8M_9$.
\begin{proposition} \label{L27lin}
Let $N_i$, $i=1,\cdots,8$ and $N$ denote nonhomogeneous dyadic numbers.
Assume that $0<T \le 1$, $\eta$ is a bounded function and $u_j$ are functions in $X^{-1,1} \cap L^{\infty}_tL^2_x$ with spatial Fourier support in $I_{N_j}$ for $j=1,\cdots 8$.

(a) Assume that $N \gg 1$ and $(M_1,...,M_9)\in \mathcal{M}_7^{high}$ satisfies the non resonance assumption $M_4M_5M_6\not\sim M_7M_8M_9$.
Then
\begin{equation} \label{L27lin.2}
\begin{split}
\Big| \int_{\R \times [0,T]}\Pi^7_{\widetilde{\eta},M_1,...,M_9}&(u_1,...,u_7) \, u_8 \, dxdt\Big|  \\ &
\lesssim M_{min(3)}M_{min(5)}N_{max(7)}^{-1} \prod_{j=1}^8(\|u_j\|_{X^{-1,1}}+\|u_j\|_{L^\infty_t L^2_x})  \, ,
\end{split}
\end{equation}
where $N_{max(7)}=\max\{N_6,N_7,N_8\}$ and $\widetilde{\eta}=\eta \phi_N(\xi_1+\xi_2+\xi_3)$.
\smallskip

(b) Assume that $N \gg 1$ and  $(M_1,...,M_9)\in \mathcal{M}_7^{med}$.
Then
\begin{equation} \label{L27lin.200}
\begin{split}
\Big| \int_{\R \times [0,T]}\Pi^7_{\widetilde{\eta},M_1,...,M_9}&(u_1,...,u_7) \, u_8 \, dxdt\Big|  \\ &
\lesssim \frac{M_{min(3)}M_{min(5)}}{M_{med(7)}} \prod_{j=1}^8(\|u_j\|_{X^{-1,1}}+\|u_j\|_{L^\infty_t L^2_x})  \, ,
\end{split}
\end{equation}
where $\widetilde{\eta}=\eta \phi_N(\xi_1+\xi_2+\xi_3)$.

Moreover, the implicit constants in estimates \eqref{L27lin.2} and \eqref{L27lin.200} only depend on the $L^{\infty}$-norm of the function $\eta$.
 \end{proposition}

 \begin{proof} The proof is similar to the proof of Proposition \ref{L2trilin}.

Under the assumptions in (a) $|\xi_1+\xi_2+\xi_3| \sim N$ and $(M_1,\cdots,M_9) \in \mathcal{M}_7^{high}$, we get by using twice Lemma \ref{teclemma} that $|\xi_1| \sim |\xi_2| \sim |\xi_3| \sim |\xi_4| \sim |\xi_5| \sim |\xi_6+\xi_7+\xi_8| \sim N$, so that $N_{max(7)} \sim \max\{N_1,\cdots,N_8\}$.
On the one hand, since $(M_1,\cdots,M_6) \in \mathcal{M}_5^{med}$, it is clear that $M_4M_5M_6 \gg M_1M_2M_3$. On the other hand, it follows arguing as in the proof of \eqref{L2trilin5b} that $M_7M_8M_9 \gtrsim M_{min(7)}N_{max(7)}^2$.
Hence, we deduce from identities \eqref{res55},  \eqref{res77} and the non resonance assumption that
\begin{displaymath}
L_{max} \gtrsim \max(M_4 M_5 M_6, M_7 M_8 M_9) \gtrsim M_{min(7)} \, N_{max(7)}^2 \, .
\end{displaymath}

Under the assumptions in (b) $|\xi_1+\xi_2+\xi_3| \sim N$ and $(M_1,\cdots,M_9) \in \mathcal{M}_7^{med}$, we get that $M_7M_8M_9 \gg M_4M_5M_6 \gg M_1M_2M_3$. We also have by applying three times Lemma \ref{teclemma} that $N_1 \sim \cdots \sim N_8 \sim M_{max(7)} \sim N$. Hence, we deduce that
\begin{displaymath}
L_{max} \gtrsim M_{min(7)} M_{med(7)} N \, .
\end{displaymath}

Estimates \eqref{L27lin.2} and \eqref{L27lin.200}  follow from these claims and estimates \eqref{prod8-est.1} and \eqref{prod8-est.100}, arguing as in the proof of Proposition \ref{L2trilin}.

Indeed, in view of the definition of $\phi_{M_1,\cdots,M_9}$ in \eqref{def.pseudoproduct80}, we can always assume by symmetry that $M_{min(3)}=M_1$ and $M_{min(7)}=\min(M_7,M_8,M_9)=M_7$. In the case where $M_{min(5)}=M_6$,  we use \eqref{prod8-est.1}, whereas in the case where $M_{min(5)}=M_4$, we  use  \eqref{prod8-est.100}. By symmetry, the case where $M_{min(5)}=M_5$ is equivalent to the case where  $M_{min(5)}=M_4$.
 \end{proof}

\section{Energy estimates} \label{Secenergy}

The aim of this section is to derive energy estimates for the solutions of \eqref{mKdV} and the solutions of the equation satisfied by the difference of two solutions of \eqref{mKdV}  (see equation \eqref{diffmKdV} below).

In order to simplify the notations in the proofs below, we will instead derive energy estimates on the solutions $u$ of the more general equation
\begin{equation}\label{eq-u0}
 \partial_t u+\partial_x^3 u = c_4\partial_x(u_1u_2u_3) \, ,
\end{equation}
where for any $i\in\{1,2,3\}$, $u_i$ solves
\begin{equation}\label{eq-u1}
  \partial_t u_i+\partial_x^3 u_i = c_i\partial_x(u_{i,1}u_{i,2}u_{i,3}) \, .
\end{equation}
Finally we also assume that each $u_{i,j}$ solves
\begin{equation}\label{eq-u2}
  \partial_t u_{i,j}+\partial_x^3 u_{i,j} = c_{i,j}\partial_x(u_{i,j,1}u_{i,j,2}u_{i,j,3}) \, ,
\end{equation}
for any $(i,j) \in \{1,2,3\}^2$.
We will sometimes use $u_4, \, u_{4,1}, \,  u_{4,2}, \,  u_{4,3}$ to denote respectively $u, \, u_1,\, u_2,\, u_3$. Here $c_j, \, j \in \{1,\cdots,4\}$ and $c_{i,j}, \, (i,j) \in \{1,2,3\}^2$ denote real constants. Moreover, we assume that all the functions appearing in \eqref{eq-u0}-\eqref{eq-u1}-\eqref{eq-u2} are real-valued.

\medskip

Also, we will use the notations defined at the beginning of Section \ref{Secmultest}.

\medskip

The main obstruction to estimate $\frac{d}{dt} \| P_Nu \|_{L^2}^2$ at this level of regularity is the resonant term $\int \partial_x\big(P_{+N}u_1P_{+N}u_2 P_{-N}u_3 \big) \, P_{-N}u \, dx$ for which the resonance relation \eqref{res3} is not strong enough. In this section we modify the energy by a fourth order term, whose part of the time derivative coming from the linear contribution of \eqref{eq-u0} will  cancel out this resonant term. Note that we also need to add a second modification to the energy to control the part of the time derivative of the first modification coming from the resonant nonlinear contribution of \eqref{eq-u0}.

\subsection{Definition of the modified energy}

Let $N_0 =2^9$ and $N$ be a nonhomogeneous dyadic number. For $t \ge 0$, we define the modified energy at the dyadic frequency $N$ by
\begin{equation} \label{defEN}
  \mathcal{E}_N(t) = \left\{ \begin{array}{ll} \frac 12 \|P_Nu(\cdot,t)\|_{L^2_x}^2 & \text{for} \ N \le N_0 \, \\
\frac 12 \|P_Nu(\cdot,t)\|_{L^2_x}^2 + \alpha \mathcal{E}_N^{3}(t)  +\beta \mathcal{E}_N^5(t) & \text{for} \ N> N_0 \, , \end{array}\right.
\end{equation}
where $\alpha$ and $\beta$ are real constants to be determined later,
\begin{displaymath}
\mathcal{E}_N^{3}(t) = \sum_{(M_1,M_2,M_3)\in\mathcal{M}_3^{med}} \int_{\Gamma^3}\phi_{M_1,M_2,M_3}\big(\vec{\xi}_{(3)}\big)  \phi_N^2(\xi_4) \frac{\xi_4}{\Omega^3(\vec{\xi}_{(3)})} \prod_{j=1}^4 \widehat{u}_j(t,\xi_j) \, ,
\end{displaymath}
where $\vec{\xi}_{(3)}=(\xi_1,\xi_2,\xi_3)$,
and
\begin{equation*}
\begin{split}
\mathcal{E}_N^5(t) &= \sum_{(M_1,...,M_6)\in \mathcal{M}_5^{med}} \sum_{j=1}^4 c_j \int_{\Gamma^5} \phi_{M_1,...,M_6}(\vec{\xi_j}_{(5)}) \phi_N^2(\xi_4) \frac{\xi_4 \xi_j}{\Omega^3(\vec{\xi_j}_{(3)})\Omega^5(\vec{\xi_j}_{(5)})}
\\ & \quad \quad \quad \quad \quad \quad \quad \quad \quad \quad \quad \quad
\times\prod_{\genfrac{}{}{0pt}{}{k=1}{ k\neq j}}^4 \widehat{u}_k(t,\xi_k) \prod_{l=1}^3 \widehat{u}_{j,l}(t,\xi_{j,l}) \, ,
\end{split}
\end{equation*}
with the convention $\displaystyle{\xi_j=-\sum_{\genfrac{}{}{0pt}{}{k=1}{k \neq j}}^4\xi_k=\sum_{l=1}^3\xi_{j,l}}$ and the notations $$\vec{\xi_j}_{(5)}=(\vec{\xi_j}_{(3)},\xi_{j,1},\xi_{j,2},\xi_{j,3}) \in \Gamma^5$$
with
\begin{displaymath}
\vec{\xi_1}_{(3)}=(\xi_2,\xi_3,\xi_4), \ \vec{\xi_2}_{(3)}=(\xi_1,\xi_3,\xi_4), \ \vec{\xi_3}_{(3)}=(\xi_1,\xi_2,\xi_4), \ \vec{\xi_4}_{(3)}=(\xi_1,\xi_2,\xi_3) \, .
\end{displaymath}

For $T >0$, we define the modified energy by using a nonhomogeneous dyadic decomposition in spatial frequency
\begin{equation}\label{def-EsT}
E^s_T(u) =  \sum_{N} N^{2s} \sup_{t \in [0,T]} \big|\mathcal{E}_N(t)\big| \, .
\end{equation}
By convention, we also set $E^s_0(u) =  \displaystyle{\sum_{N} N^{2s}  \big|\mathcal{E}_N(0)\big|}$.

The next lemma ensures that, for $s \ge\frac14$, the energy $E^s_T(u)$ is coercive in a small ball of $ H^s$ centered at the origin.
\begin{lemma}[Coercivity of the modified energy]\label{lem-EsT}
Let $s \ge 1/4$ and $u, u_i, u_{i,j}\in H^s_x$. Then it holds
\begin{equation} \label{lem-Est.1}
\|u\|_{\widetilde{L^\infty_T}H^s_x}^2 \lesssim E_T^s(u) +   \prod_{j=1}^4\|u_j\|_{\widetilde{L^\infty_T} H^s_x} + \sum_{j=1}^4 \prod_{k=1\atop k\neq j}^4 \|u_k\|_{\widetilde{L^\infty_T} H^s_x} \prod_{l=1}^3 \|u_{j,l}\|_{\widetilde{L^\infty_T} H^s_x} \, .
\end{equation}
\end{lemma}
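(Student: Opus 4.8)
The plan is to show that the three modified terms $\mathcal{E}_N^{3,med}$, $\mathcal{E}_N^{3,high}$ and $\mathcal{E}_N^5$ are, after multiplication by $N^{2s}$ and summation over $N$, controlled by the multilinear quantities on the right-hand side of \eqref{lem-Est.1}; once this is done, the trivial identity $\frac12\|P_Nu\|_{L^2_x}^2 = \mathcal{E}_N - \alpha\mathcal{E}_N^{3,med} - \gamma\mathcal{E}_N^{3,high} - \beta\mathcal{E}_N^5$ for $N>N_0$, together with $\|u\|_{\widetilde{L^\infty_T}H^s_x}^2 \sim \sum_N N^{2s}\|P_Nu\|_{L^2_x}^2$ and $|\mathcal{E}_N|\le \sup_{[0,T]}|\mathcal{E}_N|$, yields the claim (the terms $N\le N_0$ contributing only finitely much, hence being harmless). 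So the whole content is the estimation of the cubic and quintic correction terms at fixed time $t\in[0,T]$, uniformly in $t$.

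The key point is that on the support of $\phi_{M_1,M_2,M_3}$ with $(M_1,M_2,M_3)\in\mathcal{M}_3^{med}$ and $\xi_4\in I_N$, Lemma~\ref{teclemma} forces $N_1\sim N_2\sim N_3\sim N$ and, by \eqref{res3}, $|\Omega^3(\vec{\xi}_{(3)})| = 3|\xi_1+\xi_2|\,|\xi_1+\xi_3|\,|\xi_2+\xi_3| \sim M_1M_2M_3 = M_{min(3)}M_{med(3)}M_{max(3)}$ with $M_{max(3)}\sim N$; hence the multiplier $\phi_N^2(\xi_4)\xi_4/\Omega^3(\vec{\xi}_{(3)})$ is of size $\sim N/(M_{min(3)}M_{med(3)}N) = (M_{min(3)}M_{med(3)})^{-1}$. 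Applying \eqref{prod4-est.1} to $\mathcal{E}_N^{3,med}$ — with $f_j = |\widehat{u}_j|$ restricted to $I_N$ — gains a factor $M = M_{min(3)}$ (choosing the smallest of the three convolution constraints), so for each dyadic block the contribution is $\lesssim \sum_{M_{min(3)}\le M_{med(3)}\le 2^{-9}N} M_{med(3)}^{-1}\prod_{j=1}^4\|P_Nu_j\|_{L^2_x}$. The $M$-sums converge (a factor $\log N$ at worst, absorbed by an $\varepsilon$ of the room $N^{2s}$ versus $N^{4s}$), and after multiplying by $N^{2s}$ and noting $N^{2s}\prod_{j}\|P_Nu_j\|_{L^2_x} = \prod_j N^{s/2}\|P_Nu_j\|_{L^2_x}\cdot N^{0}$... more precisely one distributes $N^{2s} = \prod_{j=1}^4 N^{s/2}$ and uses Cauchy--Schwarz in $N$ together with $s>0$ to bound $\sum_N \prod_{j=1}^4 N^{s/2}\|P_Nu_j\|_{L^2_x} \le \prod_{j=1}^4\big(\sum_N N^{2s}\|P_Nu_j\|_{L^2_x}^2\big)^{1/4} = \prod_{j=1}^4\|u_j\|_{\widetilde{L^\infty_T}H^s_x}$ (uniformly in $t$). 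The term $\mathcal{E}_N^{3,high}$ is handled the same way: there $(M_1,M_2,M_3)\in\mathcal{M}_3^{high,2}$ and the extra frequency restriction $N_{med}\le N^{1/2}$ means $N_{max}\sim N$, the multiplier is $\lesssim N/\Omega^3$ with $|\Omega^3|\gtrsim M_{min(3)}N^2$ (two of the three sum-frequencies being comparable to $N$), so the multiplier is $\lesssim M_{min(3)}^{-1}N^{-1}$ and \eqref{prod4-est.1} again gains $M_{min(3)}$, leaving $N^{-1}$ to spare, which more than covers the summations.

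For the quintic term $\mathcal{E}_N^5$, the structure is analogous but one level deeper: here $(M_1,\dots,M_6)\in\mathcal{M}_5^{med}$, so both $(M_1,M_2,M_3)\in\mathcal{M}_3^{med}$ — giving $|\Omega^3(\vec{\xi_j}_{(3)})|\sim M_{min(3)}M_{med(3)}N$ and all of $\xi_1,\dots,\xi_4$ of size $\sim N$ by Lemma~\ref{teclemma} — and the condition $M_{med(5)}\le 2^{-9}N$ forces, again by Lemma~\ref{teclemma} applied to the second triple, $|\Omega^5(\vec{\xi_j}_{(5)})| = |\Omega^3(\vec{\xi_j}_{(3)}) + \Omega^3(\xi_{j,1},\xi_{j,2},\xi_{j,3})|$ to be of size $\sim \max$ of the two, hence $\gtrsim M_{med(3)}N^2$ (using $M_{min(5)}> 2^9 M_{med(3)}$ in the definition of $\mathcal{M}_5^{med}$, so the second resonance dominates and has size $\sim M_{min(5)}M_{med(5)}N\gtrsim M_{med(3)}N^2$). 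Thus the full multiplier $\phi_N^2(\xi_4)\xi_4\xi_j/(\Omega^3\Omega^5)$ is of size $\lesssim N\cdot N/(M_{min(3)}M_{med(3)}N\cdot M_{med(3)}N^2) = (M_{min(3)}M_{med(3)}^2 N)^{-1}$, and \eqref{prod6.est.1} with $M_1\leftarrow M_{min(3)}$, $M_4\leftarrow M_{min(5)}$ gains $M_{min(3)}M_{min(5)}$; since $M_{min(5)}\le M_{med(5)}\le 2^{-9}N$ this leaves a net $\lesssim M_{med(3)}^{-2}N^{-1}\cdot(\text{small }M\text{-sums})$, and after $\sum_N N^{2s}(\cdots)$ one splits $N^{2s}$ over the six functions $u_k$ ($k\ne j$) and $u_{j,l}$, each getting $N^{2s/6}<N^{s/2}$ (here $s>1/4$ is what makes $2s/6 < s/2$ useless but $2s/6$ suffices since we have the $N^{-1}$ margin; in fact any $s>0$ works once the $N^{-1}$ is present) — wait, one must be a little careful: there are six low-frequency functions but only $N^{2s}$ to distribute, whereas the norms demand $N^{2s}$ each; this is exactly where the seven remaining negative powers of $N$ (here $N^{-1}$) are spent, distributing $N^{-1} = \prod (\text{tiny negative powers})$ is not enough — rather one keeps the product structure: bound by $N^{2s-1}\|P_Nu_4\|_{L^2}\prod_{k\ne 4,j}\|P_{N_k}u_k\|_{L^2}\prod_l\|u_{j,l}\|_{L^2}$, sum the inner (unlocalized-in-$N$) factors freely to get $\prod_{k\ne 4,j}\|u_k\|_{L^2_x}\prod_l\|u_{j,l}\|_{L^2_x}\lesssim \prod\|\cdot\|_{\widetilde{L^\infty_T}H^s_x}$ (using $s\ge0$ and the embedding \eqref{tildenorm} backwards on the low frequencies — indeed $\|P_{N_k}u_k\|_{L^2}\le N_k^{-s}\|u_k\|_{H^s}$ so $\sum_{N_k}$ converges), and absorb $N^{2s-1}$ against $\sum_N N^{-2(s-1/4)\cdot 2}\|P_Nu_4\|_{L^2}^2\cdots$; since $s>1/4$, $2s-1<2s-1/2$... the precise bookkeeping is that $2s - 1 < 2s$ and we have one factor $N^{2s}\|P_Nu_4\|^2_{L^2}$-worth of room while needing only $N^{2s-1}\|P_Nu_4\|_{L^2}$, which is summable in $N$ by Cauchy--Schwarz since $s>0$. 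In all cases the upshot is that each correction term is dominated by the corresponding multilinear product on the right of \eqref{lem-Est.1}, uniformly in $t\in[0,T]$, which completes the proof.

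\textbf{Main obstacle.} The delicate point is not any single multilinear estimate — those are immediate from Lemmas~\ref{prod4-est} and \ref{prod6-est} — but the bookkeeping of the dyadic sums: verifying that on each of the three supports the resonance functions $\Omega^3$, $\Omega^5$ are bounded below by the claimed powers of $N$ times the small $M$'s (this uses Lemma~\ref{teclemma} and the precise thresholds $2^{-9}N$, $N^{1/2}$, $2^9M_{med(3)}$ built into the definitions of $\mathcal{M}_3^{med}$, $\mathcal{M}_3^{high,2}$, $\mathcal{M}_5^{med}$), and then checking that after inserting the $L^\infty$ bound on the multiplier and the gain from the product estimate, the leftover negative power of $N$ strictly beats $N^{2s}$ and all the geometric $M$-sums, so that a single Cauchy--Schwarz in $N$ closes the estimate with the $\widetilde{L^\infty_T}H^s_x$ norms — crucially using $s>1/4$ only for the genuinely dangerous $\mathcal{E}_N^{3,med}$ term (where the net power is $N^{2s - 2}$ against $\sum_N N^{2s}\|P_Nu\|^2$, i.e. one needs $2s-2 < 2s - \tfrac12 - \tfrac12$, automatic, but the $M_{med}\sim N$ ceiling in $\mathcal M_3^{med}$ limits how much one can gain), while $s>0$ suffices for the rest.
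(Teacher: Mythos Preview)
Your overall strategy is exactly the paper's: bound each correction term $\mathcal{E}_N^{3,med}$, $\mathcal{E}_N^{3,high}$, $\mathcal{E}_N^5$ at fixed time using Lemmas~\ref{prod4-est}, \ref{prod6-est} together with Lemma~\ref{teclemma} and the explicit factorization \eqref{res3}, then sum over $N$. However, several of the computations are wrong, and two of the errors are not cosmetic.

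First, the inequality you invoke to sum over $N$,
\[
\sum_N \prod_{j=1}^4 N^{s/2}\|P_Nu_j\|_{L^2_x} \le \prod_{j=1}^4\Big(\sum_N N^{2s}\|P_Nu_j\|_{L^2_x}^2\Big)^{1/4},
\]
is false: four-fold H\"older gives $\prod_j\big(\sum_N N^{2s}\|P_Nu_j\|_{L^2_x}^4\big)^{1/4}$ on the right, not the $\ell^2$-norms. The paper instead bounds by $\prod_{j=1}^4\|P_{\sim N}u_j\|_{H^s_x}$ and then sums in $N$ by putting two factors in $\ell^\infty_N$ and Cauchy--Schwarz on the remaining two; since all four functions are localized at $\sim N$, this closes. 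Relatedly, your claim that the $M$-sum in $\mathcal{E}_N^{3,med}$ contributes ``a factor $\log N$ at worst'' is wrong: $\sum_{M_{med}>N^{-1/2}}M_{med}^{-1}\sim N^{1/2}$, so the net power is $N^{2s+1/2}$ against $N^{4s}$, and this is precisely where $s>1/4$ is used.

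Second, and more seriously, your lower bound on $\Omega^5$ is incorrect. On $\mathcal{M}_5^{med}$ one has (with $M_1\le M_2\le M_3$, $M_4\le M_5\le M_6$) $M_3\sim M_6\sim N$ and $|\Omega^5|\sim M_4M_5N$, \emph{not} $M_{med(3)}N^2$: the constraint $M_{med(5)}\le 2^{-9}N$ is an upper bound, so $M_4M_5\lesssim N^2$ but nothing forces $M_4M_5\gtrsim M_2N$. The correct multiplier size is therefore $N^2/(M_1M_2N\cdot M_4M_5N)=(M_1M_2M_4M_5)^{-1}$; after the gain $M_1M_4$ from \eqref{prod6.est.1} the net is $(M_2M_5)^{-1}$, and summing $M_2>N^{-1/2}$, $M_5\gtrsim M_2$ costs a factor $\sim N$. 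One then needs $N^{2s+1}\lesssim N^{6s}$, i.e.\ $s>1/4$, exactly as the paper records (``since $2s+1<6s$''). Your assertion in the ``Main obstacle'' paragraph that $s>0$ suffices for $\mathcal{E}_N^5$ is therefore wrong, and the phantom $N^{-1}$ margin you try to spend in the subsequent bookkeeping does not exist.
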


\begin{proof}
We infer from (\ref{def-EsT}) and the triangle inequality that
\begin{equation} \label{lem-Est.2}
\|u\|_{\widetilde{L^\infty_T}H^s_x}^2 \lesssim E_T^s(u) + \!\sum_{N\ge N_0}N^{2s} \sup_{t \in [0,T]}\big|\mathcal{E}_N^{3}(t)\big|+\! \sum_{N\ge N_0}N^{2s}\sup_{t \in [0,T]}\big|\mathcal{E}_N^5(t)\big| \, .
\end{equation}

We first estimate the contribution of $\mathcal{E}_N^{3}$. By symmetry, we can always assume that $M_1 \le M_2 \le M_3$, so that we have $N^{-\frac12} < M_1 \le M_2 \ll N$ and $M_3 \sim N$, since $(M_1,M_2,M_3) \in \mathcal{M}_3^{med}$. Then, we have from Lemma \ref{prod4-est},
\begin{equation} \label{lem-Est.3}
\begin{split}
N^{2s}\big|\mathcal{E}_N^{3}(t)\big| &\lesssim \sum_{N^{-\frac12}<M_1,M_2\ll N\atop M_3\sim N} \frac{N^{2s+1}}{M_1M_2M_3} M_1 \prod_{j=1}^4\|P_{\sim N}u_{j}(t)\|_{L^2_x} \\ &
\lesssim N^{\frac12-2s} \prod_{j=1}^4\|P_{\sim N}u_{j}(t)\|_{H^s_x} \, ,
\end{split}
\end{equation}
where we used that $\displaystyle{\sum_{N^{-\frac12}<M_1,M_2\ll N}\frac1{M_2} \lesssim \sum_{N^{-\frac12}<M_1\ll N}\frac1{M_1} \lesssim N^{-\frac12}}$.

To estimate the contribution of $\mathcal{E}_N^5(t)$, we notice from Lemma \ref{teclemma} that for $(M_1,...,M_6)\in \mathcal{M}_5^{med}$, the integrand in the definition of $\mathcal{E}_N^5$ vanishes unless $|\xi_1|\sim ...\sim |\xi_4| \sim N$ and $|\xi_{j,1}| \sim |\xi_{j,2}| \sim |\xi_{j,3}| \sim N$. Moreover, we assume without loss of generality $M_1\le M_2\le M_3$ and $M_4\le M_5\le M_6$, so that
\begin{displaymath}
\left|\frac{\xi_4 \xi_j}{\Omega^3(\vec{\xi_j}_{(3)})\Omega^5(\vec{\xi_j}_{(6)})} \right| \sim \frac{N^2}{M_1M_2N\cdot M_4M_5N}\sim \frac{1}{M_1M_2M_4M_5} \, .
\end{displaymath}
Thus we infer from \eqref{prod6.est.1} that
\begin{align} \label{lem-Est.4}
N^{2s}|\mathcal{E}_N^5(t)| &\lesssim \sum_{j=1}^4 \sum_{N^{-\frac12} \le M_1\le M_2 \atop  M_2\le M_4\le M_5 \ll N} \frac{N^{2s}}{M_2M_5} \prod_{k=1\atop k\neq j}^4 \|P_{\sim N}u_k(t)\|_{L^2_x} \prod_{l=1}^3 \|P_{\sim N}u_{j,l}(t)\|_{L^2_x} \nonumber \\ & \lesssim
 N^{1-4s} \sum_{j=1}^4 \prod_{k=1\atop k\neq j}^4 \|P_{\sim N}u_k(t)\|_{H^s_x} \prod_{l=1}^3 \|P_{\sim N}u_{j,l}(t)\|_{H^s_x} \, .
\end{align}

Finally, we conclude the proof of \eqref{lem-Est.1} by summing  \eqref{lem-Est.3} and \eqref{lem-Est.4}  over the dyadic  $ N\ge N_0 $, with $ s>1/4 $, and using
 \eqref{lem-Est.2}.
\end{proof}

\begin{remark} Arguing as in the proof of Lemma \ref{lem-EsT}, we get that
\begin{equation} \label{lem-Est.100}
E_0^s(u) \lesssim \|u(0)\|_{H^s}^2+   \prod_{j=1}^4\|u_j(0)\|_{H^s_x} + \sum_{j=1}^4 \prod_{k=1\atop k\neq j}^4 \|u_k(0)\|_{H^s_x} \prod_{l=1}^3 \|u_{j,l}(0)\|_{H^s_x}
\end{equation}
as soon as $s \ge 1/4$.
\end{remark}

\subsection{Estimates for the modified energy}

\begin{proposition}\label{prop-ee}
Let $s>\frac13$, $0<T \le 1$ and $u, \, u_i , \,u_{i,j}\in Y^s_T$ be solutions of (\ref{eq-u0})-(\ref{eq-u1})-(\ref{eq-u2}) on $]0,T[$. Then we have
\begin{equation} \label{prop-ee.1}
\begin{split}
 E^s_T(u) &\lesssim E^s_0(u) + \prod_{j=1}^4\|u_j\|_{Y^s_T} + \sum_{j=1}^4 \prod_{k=1\atop k\neq j}^4 \|u_k\|_{Y^s_T} \prod_{l=1}^3 \|u_{j,l}\|_{Y^s_T} \\ & \quad
+\sum_{j=1}^4\sum_{m=1 \atop m \neq j}^4\prod_{k=1 \atop k \neq j}^4\|u_k\|_{Y^s_T} \prod_{l=1 \atop l\neq m}^3 \|u_{j,l}\|_{Y^s_T} \prod_{n=1}^3\|u_{j,m,n}\|_{Y^s_T}\\ &
\quad +\sum_{j=1}^4\sum_{m=1}^3\prod_{k=1 \atop k \neq j, m}^4\|u_k\|_{Y^s_T}
 \prod_{l=1}^3 \|u_{j,l}\|_{Y^s_T} \prod_{n=1}^3 \|u_{m,n}\|_{Y^s_T}\, .
\end{split}
\end{equation}
\end{proposition}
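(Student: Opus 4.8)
The plan is to differentiate the modified energy $\mathcal{E}_N(t)$ in time, exploit the equations \eqref{eq-u0}--\eqref{eq-u2} to express $\frac{d}{dt}\mathcal{E}_N$ as a sum of multilinear space-time integrals, integrate by parts in $x$ (i.e.\ move the derivative in $\partial_x(u_1u_2u_3)$ around in Fourier variables), and then classify the resulting terms by the relative sizes of the frequency-difference parameters $M_i$ — namely according to whether the relevant triples lie in $\mathcal{M}_3^{low}$, $\mathcal{M}_3^{med}$ or $\mathcal{M}_3^{high}$ (and similarly for the $5$- and $7$-linear pieces). For $N\le N_0$ there is nothing modified and one simply estimates $\frac{d}{dt}\tfrac12\|P_Nu\|_{L^2}^2 = c_4\int\partial_x(P_N(u_1u_2u_3))P_Nu\,dx$ by hand (only low frequencies, so all losses are harmless). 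For $N>N_0$ one writes
\begin{equation*}
\mathcal{E}_N(t)-\mathcal{E}_N(0)=\int_0^t \frac{d}{d\sigma}\mathcal{E}_N(\sigma)\,d\sigma,
\end{equation*}
and the whole point of the construction is that the ``bad'' contribution $\mathcal{R}_N$ coming from $\frac{d}{dt}\tfrac12\|P_Nu\|_{L^2}^2$ restricted to the region where $\Omega^3$ is small (the $\mathcal{M}_3^{med}$ and $\mathcal{M}_3^{high,2}$ regions) is exactly cancelled, modulo lower order terms, by the linear part of $\frac{d}{dt}\big(\alpha\mathcal{E}_N^{3,med}+\gamma\mathcal{E}_N^{3,high}\big)$; this fixes $\alpha$ and $\gamma$. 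The division by $\Omega^3$ in the definitions is licit precisely because on $\mathcal{M}_3^{med}\cup\mathcal{M}_3^{high,2}$ one has, by Lemma \ref{teclemma} and the explicit formula \eqref{res3}, the lower bound $|\Omega^3|\gtrsim M_{min}M_{med}N$, so the symbols $\xi_4/\Omega^3$ are bounded by $M_{min}^{-1}M_{med}^{-1}$, i.e.\ they are genuine $L^\infty$ multipliers after the dyadic localization.

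\textbf{The main steps, in order.} First I would compute $\frac{d}{dt}\mathcal{E}_N^{3,med}$ and $\frac{d}{dt}\mathcal{E}_N^{3,high}$: the contribution of the linear part $-\partial_x^3$ of each factor produces a factor $i\,\Omega^3(\vec\xi_{(3)})$ (or rather $i\Omega^3$ times the symbol with $\Omega^3$ in the denominator, which cancels), reproducing $\pm\mathcal{R}_N$ up to the choice of $\alpha,\gamma$; the contribution of the nonlinear parts $c_j\partial_x(u_{j,1}u_{j,2}u_{j,3})$ turns the $4$-linear form into $5$-linear forms. Second, those $5$-linear forms are split via the $\mathcal{M}_5$ decompositions: the resonant-and-small part $\mathcal{M}_5^{med}$ (resp.\ $\widetilde{\mathcal{M}}_5^{med,1}$) is absorbed into $\beta\mathcal{E}_N^5$ by the same cancellation mechanism (now using $|\Omega^5|=|\Omega^3(\vec\xi_{(3)})+\Omega^3(\vec\xi_{j,(3)})|$, which is $\gtrsim M_{min(5)}M_{med(5)}N$ on those regions by the size restrictions built into the definitions, and the differentiation of $\mathcal{E}_N^5$ in turn produces $7$-linear forms), while the genuinely nonresonant or ``high'' parts $\mathcal{M}_5^{high}$, $\widetilde{\mathcal{M}}_5^{high}$, $\widetilde{\mathcal{M}}_5^{med,2}$ are estimated directly by Proposition \ref{L25lin}. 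Third, I would estimate $\frac{d}{dt}\mathcal{E}_N^5$: its linear-cancellation part is what was just used; its remaining $7$-linear forms on $\mathcal{M}_7^{high}$ are bounded by Proposition \ref{L27lin}, and on $\mathcal{M}_7^{low}$ — where the iteration stops — one simply uses the crude bound \eqref{prod8-est.1} together with the gain $\prod_j N^{-2s}\cdot N^{\cdots}$ coming from the $\Omega$-denominators, summing the geometric series in the $M$'s. Throughout, the extension operator $\rho_T$ of Lemma \ref{extension} is used to replace the $Y^s_T$-functions by functions on $\mathbb R^2$ with time support in $[0,2]$, so that Propositions \ref{L2trilin}, \ref{L25lin}, \ref{L27lin} (all stated for such functions) apply; and after each application one uses Cauchy--Schwarz in $N$ together with the $\widetilde{L^\infty_T}H^s_x$-structure and the summability in the dyadic parameters $M_i$, $N_j$ to recover $\prod\|u\|_{Y^s_T}$ (plus the lower tiers $\prod\|u_{j,l}\|$, $\prod\|u_{j,m,n}\|$) — which yields exactly the right-hand side of \eqref{prop-ee.1}. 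Finally the $L^4_TL^\infty_x$ component of $Y^s_T$ does not enter here; it is only the $X^{s-1,1}_T$ and $\widetilde{L^\infty_T}H^s_x$ norms that are used, via Proposition \ref{L2trilin} and its $5$- and $7$-linear analogues.

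\textbf{Main obstacle.} The delicate point is not any single estimate but the bookkeeping of the cancellation: one has to check that the region decompositions $\mathcal{M}_3^{med}/\mathcal{M}_3^{high,2}$ versus their complements, and then $\mathcal{M}_5^{med}/\widetilde{\mathcal{M}}_5^{med,1}$ versus $\mathcal{M}_5^{high}$ etc., are set up so that (i) wherever $\Omega^3$ (resp.\ $\Omega^5$) appears in a denominator it is bounded below with the correct powers of $M_{min},M_{med},N$, (ii) the symbols so produced are uniformly bounded after dyadic localization so that Propositions \ref{L2trilin}--\ref{L27lin} apply with constants depending only on $\|\eta\|_{L^\infty}$, and (iii) the leftover ``low'' tiers $\mathcal{M}_3^{low}$, $\mathcal{M}_7^{low}$, which are handled by brute force, genuinely gain enough powers of $N$ to beat $N^{2s+1}$ with room to sum — the constraint $s>\tfrac14$ entering precisely through the $\mathcal{M}_3^{low}$ term (where $M_{min}\le N^{-1/2}$ forces a $N^{1/2}$ loss against $N^{-2s}$ from two of the four factors, i.e.\ $N^{1+1/2-4s}<N^0$), in parallel with the coercivity Lemma \ref{lem-EsT}. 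Making the three modified-energy contributions $\mathcal{E}^{3,med}_N$, $\mathcal{E}^{3,high}_N$, $\mathcal{E}^5_N$ fit together so that all their time derivatives are either cancelled or estimated is the heart of the proof; everything else is an application of Section \ref{Secmultest}.
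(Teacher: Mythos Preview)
Your overall architecture matches the paper's proof closely: differentiate $\mathcal{E}_N$, use the equations, choose $\alpha,\gamma,\beta$ so that the linear parts of $\frac{d}{dt}\mathcal{E}_N^{3,med}$, $\frac{d}{dt}\mathcal{E}_N^{3,high}$, $\frac{d}{dt}\mathcal{E}_N^5$ cancel the bad pieces of $I_N^{med}$, $I_N^{high,2}$, $J_N^{med}$ respectively, and estimate all remaining multilinear forms via the results of Section~\ref{Secmultest}. Two points in your sketch, however, would not go through as written.

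\textbf{First}, the $\widetilde{\mathcal{M}}_5^{med,1}$ contribution (arising from the nonlinear part of $\frac{d}{dt}\mathcal{E}_N^{3,high}$) is \emph{not} absorbed into $\mathcal{E}_N^5$: the definition of $\mathcal{E}_N^5$ sums only over $\mathcal{M}_5^{med}$, which is built on $(M_1,M_2,M_3)\in\mathcal{M}_3^{med}$, whereas the $\widetilde{\mathcal{M}}_5$ regions sit over $\mathcal{M}_3^{high,2}$. There is no second-order cancellation available there. The paper instead estimates the $\widetilde{\mathcal{M}}_5^{med,1}$ piece directly via \eqref{prod6.est.1}; this works only because of the extra restriction $N_{med}\le N^{1/2}$ built into $\mathcal{E}_N^{3,high}$ together with the constraint $M_{min(5)}\lesssim N^{1/2}$ defining $\widetilde{\mathcal{M}}_5^{med,1}$, which caps the bad sum at $N^{1/2-2s}$. (The part of $I_N^{high,2}$ with $N_{med}>N^{1/2}$ is handled like $I_N^{high,3}$ and never enters $\mathcal{E}_N^{3,high}$.)

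\textbf{Second}, and more seriously, the ``crude bound'' \eqref{prod8-est.1} does \emph{not} close the $\mathcal{M}_7^{low}$ estimate. That bound produces a factor $M_{min(3)}M_{min(5)}M_{min(7)}$; combined with the symbol size $N/(M_1M_2M_4M_5)$ this leaves $N^{2s+1}M_7/(M_2M_5)$, and summing $M_7\le M_8\ll N$ costs a full power of $N$, so after converting eight $P_{\sim N}$ factors to $H^s$ you end up with $N^{3-6s}$ --- summable only for $s>\tfrac12$. The paper uses instead the refined bound \eqref{prod8-est.0}, which replaces $M_7$ by $M_2^{1/2}M_5^{1/2}$ (and makes the $(M_7,M_8,M_9)$-sum trivial via the partition of unity); this yields $N^{3/2-6s}$, which is exactly the threshold $s>\tfrac14$. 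This is not a detail that can be patched by choosing a different crude lemma from the list --- the half-power gain in $M_2,M_5$ is essential.

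A minor remark: $Y^s_T=X^{s-1,1}_T\cap\widetilde{L^\infty_T}H^s_x$ has no $L^4_TL^\infty_x$ component, so your final comment is vacuously true; you may be thinking of $Z^s_T$.
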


\begin{proof} Let $0<t \le T \le 1$.
First, assume that $N \le N_0=2^9$. By using the definition of $\mathcal{E}_N$ in \eqref{defEN}, we have
\begin{displaymath}
\frac{d}{dt} \mathcal{E}_N(t) =c_4 \int_{\mathbb R}P_N\partial_x\big( u_1u_2u_3 \big) P_Nu \, dx \, ,
\end{displaymath}
which yields after integrating between $0$ and $t$ and applying H\"older's inequality  that
\begin{displaymath}
\begin{split}
|\mathcal{E}_N(t)| &\le |\mathcal{E}_N(0)|+|c_4|\Big|\int_{\mathbb R_t}P_N\partial_x\big( u_1u_2u_3 \big) P_Nu  \, \Big| \\ &
\lesssim  |\mathcal{E}_N(0)|+\prod_{i=1}^4 \|u_i\|_{L^\infty_TL^{4}_x}\lesssim  \mathcal{E}_N(0)+\prod_{i=1}^4 \|u_i\|_{L^\infty_TH^{\frac14}_x}
\end{split}
\end{displaymath}
where the notation $\mathbb R_t=\mathbb R \times [0,t]$ defined at the beginning of Section \ref{Secmultest} has been used. Thus, we deduce after taking the supreme over $t \in [0,T]$ and summing over $N \le N_0$ (recall here that we use a nonhomogeneous dyadic decomposition in $N$) that
\begin{equation} \label{prop-ee.2}
\sum_{N \le N_0} N^{2s} \sup_{t \in [0,T]} \big|\mathcal{E}_N(t) \big| \lesssim \sum_{N \le N_0} N^{2s} \big|\mathcal{E}_N(0) \big|+\prod_{j=1}^4\|u_j\|_{Y_T^{\frac14}} \, .
\end{equation}

Next, we turn to the case where $N\ge N_0$. As above, we differentiate $\mathcal{E}_N$ with respect to time and then integrate between 0 and $t$ to get
\begin{align}
N^{2s}\mathcal{E}_N(t) &= N^{2s}\mathcal{E}_N(0) + c_4N^{2s}\int_{\R_t}P_N\partial_x(u_1u_2u_3)P_Nu + \alpha N^{2s}\int_0^t\frac{d}{dt}\mathcal{E}_N^{3}(t')dt' \nonumber \\
&\quad  + \beta N^{2s} \int_0^t \frac{d}{dt} \mathcal{E}_N^5(t')dt' \nonumber \\
&=: N^{2s}\mathcal{E}_N(0) + c_4I_N + \alpha J_N  + \beta K_N \, . \label{prop-ee.3}
\end{align}

We rewrite $I_N$ in Fourier variable and get
\begin{align*}
I_N &= N^{2s} \int_{\Gamma^3_t} (-i\xi_4) \phi_N^2(\xi_4) \widehat{u}_1(\xi_1) \widehat{u}_2(\xi_2) \widehat{u}_3(\xi_3) \widehat{u}_4(\xi_4) \\
&= \sum_{(M_1,M_2,M_3) \in \mathbb D^3} N^{2s} \int_{\Gamma^3_t} (-i\xi_4) \phi_{M_1,M_2,M_3}(\vec{\xi}_{(3)}) \phi_N^2(\xi_4) \prod_{j=1}^4\widehat{u}_j(\xi_j) \, .
\end{align*}
Next we decompose $I_N$ as
\begin{align}
  I_N &=  N^{2s}\left(\sum_{\mathcal{M}_3^{low}} + \sum_{\mathcal{M}_3^{med}} +\sum_{\mathcal{M}_3^{high}}\right) \int_{\Gamma^3_t} (-i\xi_4) \phi_{M_1,M_2,M_3}(\vec{\xi}_{(3)}) \phi_N^2(\xi_4) \prod_{j=1}^4\widehat{u}_j(\xi_j) \nonumber \\
  &=: I_N^{low} + I_N^{med} + I_N^{high} \, , \label{prop-ee.4}
\end{align}
by using the notations in Section \ref{Secmultest}.

\medskip

\noindent \textit{Estimate for $I_N^{low}$.}
Thanks to Lemma \ref{teclemma}, the integral in $I_N^{low}$ is non trivial for $|\xi_1|\sim |\xi_2|\sim |\xi_3|\sim |\xi_4|\sim N$ and $M_{min}\le N^{-\frac12}$. Therefore we get from Lemma \ref{prod4-est} that
\begin{equation*}
\begin{split}
  |I_N^{low}| &\lesssim \sum_{M_{min}\le N^{-\frac12} \atop M_{min} \le M_{med} \ll N} N^{2s+1}M_{min} \prod_{j=1}^4 \|P_{\sim N}u_j\|_{L^\infty_TL^2_x}
  \lesssim \prod_{j=1}^4 \|P_{\sim N}u_j\|_{L^\infty_TH^s_x} \, ,
\end{split}
\end{equation*}
since $(2s+\frac12)<4s$. This leads to
\begin{equation} \label{prop-ee.5}
\sum_{N \ge N_0}|I_N^{low}| \lesssim \prod_{j=1}^4 \|u_j\|_{Y^s_T} \, .
\end{equation}

\medskip

\noindent \textit{Estimate for $I_N^{high}$.} We perform  nonhomogeneous dyadic decompositions  $\displaystyle{u_j =\sum_{N_j} P_{N_j}u_j}$ for $j=1,2,3$. We assume without loss of generality that $ N_1=\max(N_1,N_2,N_3) $. Recall that this ensures that $M_{max}\sim N_1$.  We separate the contributions of two regions that we denote $I_N^{high,1}$ and $I_N^{high,2}$.\\
{$ \bullet \;  M_{min}\le N^{-1}$.}
Then  we apply Lemma \ref{prod4-est}  on the sum over $ M_{med} $ and use   the discrete Young's inequality to get
\begin{align}
|I_N^{high,1}| &\lesssim  \sum_{M_{min}\le N^{-1} } N^{2s+1}M_{min}\sum_{N_{1} \gtrsim N, N_2,N_3} \prod_{j=2}^3 \|P_{ N_j}u_j\|_{L^\infty_TL^2_x} \|P_{ N_1}u_1\|_{L^2_{T,x}} \|P_{ N}u_4\|_{L^2_{T,x}} \nonumber \\
 \lesssim & \sum_{N_{1}\ge N}
 \Bigl(\frac{N}{N_{1}}\Bigr)^{s}
  \|P_{N_1} u_1\|_{L^2_T H^{s}_x}   \|P_{N} u_4\|_{L^2_T H^{s}_x}  \|u_2\|_{L^\infty_T H^{0+}_x} \|u_3\|_{L^\infty_T H^{0+}_x} \nonumber\\
\lesssim & \, \delta_N  \|P_{N} u_4\|_{L^2_T H^{s}_x}  \prod_{i=1}^3 \|u_i\|_{L^\infty_T H^{s}_x} \; , \label{yoyo}
\end{align}
with $ \{\delta_{2^j}\}\in l^2(\mathbb N) $.
Summing over $N$  this leads to
\begin{equation} \label{prop-ee.5b}
\sum_{N \ge N_0}|I_N^{high,1}| \lesssim \prod_{j=1}^4 \|u_j\|_{Y^s_T} \, .
\end{equation}

\noindent
{$ \bullet \;  M_{min}> N^{-1}$.} For $j=1,\cdots,4$, let $\tilde{u}_j$ be  an extension of $u_j$ to $\mathbb R^2$ such that
 $\|\tilde{u}_j \|_{Y^s} \le 2 \|u \|_{Y^s_T}$.
 Now, we define $u_{N_j}=P_{N_j}\tilde{u}_j$ and perform nonhomogeneous dyadic decompositions in $N_j$, so that $I_N^{high,2}$ can be rewritten as
\begin{equation*}
I_N^{high,2} =N^{2s+1} \sum_{N_j, N_4 \sim N} \sum_{(M_1,M_2,M_3) \in \mathcal{M}_3^{high}} \int_{\mathbb R_t}
\Pi^3_{\eta,M_1,M_2,M_3}(u_{N_1},u_{N_2},u_{N_3}) \, u_{N_4} \, ,
\end{equation*}
with $\eta(\xi_1,\xi_2,\xi_3)= \phi_N^2(\xi_4)\frac{i\xi_4}{N} \in L^{\infty}(\Gamma^3)$.
Thus, it follows from \eqref{L2trilin.2} that
\begin{eqnarray*}
|I_N^{high,2}| &  \lesssim  & N^{2s} \sum_{N_j, N_4 \sim N} \frac{N}{N_{max}}
\Bigl( \sum_{N^{-1} < M_{min} \le 1 \atop N \lesssim M_{med} \le M_{max} \lesssim N_{max}}  M_{min}^\frac{1}{16}+
 \sum_{1 < M_{min} \lesssim N_{med} \atop N \lesssim M_{med} \le M_{max} \lesssim N_{max}}\Bigr)  \\
 & &\quad \|u_{N_1}\|_{Y^0} \|u_{N_2}\|_{Y^0} \|u_{N_3}\|_{Y^0} \|u_{N_4}\|_{Y^0}\, .
\end{eqnarray*}
Proceeding as in \eqref{yoyo} (here we sum over $ M_{min}\le 1  $ by using the factor $ M_{min}^\frac{1}{16}  $  and over  $ M_{min}\ge 1  $ by using that $ M_{min}\le N_{med} $ ) we get
\begin{equation} \label{prop-ee.6}
\sum_{N \ge N_0}|I_N^{high,2}| \lesssim \prod_{j=1}^4 \|u_j\|_{Y^s_T} \, .
\end{equation}

\medskip
\medskip

\noindent \textit{Estimate for $c_4 I_N^{med}+\alpha J_N+\beta K_N$.}
Using (\ref{eq-u0})-(\ref{eq-u1}), we can rewrite $\frac{d}{dt}\mathcal{E}_N^{3}$ as
\begin{align*}
    &\sum_{\mathcal{M}_3^{med}} \int_{\Gamma^3} \phi_{M_1,M_2,M_3}(\vec{\xi}_{(3)}) \phi_N^2(\xi_4) \frac{i\xi_4(\xi_1^3+\xi_2^3+\xi_3^3+\xi_4^3)}{\Omega^3(\vec{\xi}_{(3)})}  \prod_{j=1}^4\widehat{u}_j(\xi_j) \\
  &+ \sum_{j=1}^4 c_j \sum_{\mathcal{M}_3^{med}} \int_{\Gamma^3} \phi_{M_1,M_2,M_3}(\vec{\xi}_{(3)}) \phi_N^2(\xi_4)\frac{\xi_4}{\Omega^3(\vec{\xi}_{(3)})} \prod_{k=1 \atop{k \neq j}}^4\widehat{u}_k(\xi_k)  \mathcal{F}_x\partial_x \big( u_{j,1}u_{j,2}u_{j,3} \big)(\xi_j) \, .
  \end{align*}
Using (\ref{res3}), we see by choosing $\alpha=c_4$ that $I_N^{med}$ is canceled out by the first term of the above expression. Hence,
\begin{equation} \label{prop-ee.7}
c_4 I_N^{med}+\alpha J_N = c_4\sum_{j=1}^4 c_j J_N^j \, ,
\end{equation}
where, for $j=1,\cdots,4$,
\begin{displaymath}
J_N^j = iN^{2s}\sum_{\mathcal{M}_3^{med}}  \int_{\Gamma^5_t} \phi_{M_1,M_2,M_3}(\vec{\xi}_{(3)}) \phi_N^2(\xi_4) \frac{\xi_4\xi_j}{\Omega^3(\vec{\xi}_{(3)})} \prod_{k=1 \atop k\neq j}^4 \widehat{u}_k(\xi_k) \prod_{l=1}^3 \widehat{u}_{j,l}(\xi_{j,l}) \, ,
\end{displaymath}
with the convention $\displaystyle{\xi_j=-\sum_{k=1 \atop{k \neq j}}^4\xi_k=\sum_{l=1}^3\xi_{j,l}}$ and the notation $\vec{\xi}_{(3)}=(\xi_1,\xi_2,\xi_3)$.
Now, we define $\vec{\xi_j}_{(3)}$, for $j=1,2,3,4$ as follows:
\begin{displaymath}
\vec{\xi_1}_{(3)}=(\xi_2,\xi_3,\xi_4), \ \vec{\xi_2}_{(3)}=(\xi_1,\xi_3,\xi_4), \ \vec{\xi_3}_{(3)}=(\xi_1,\xi_2,\xi_4), \ \vec{\xi_4}_{(3)}=(\xi_1,\xi_2,\xi_3) \, .
\end{displaymath}
With this notation in hand and by using the symmetries of the functions $\sum_{\mathcal{M}_3^{med}}\phi_{M_1,M_2,M_3}$ and $\Omega^3$, we obtain that
\begin{displaymath}
J_N^j = iN^{2s}\sum_{\mathcal{M}_3^{med}}  \int_{\Gamma^5_t} \phi_{M_1,M_2,M_3}(\vec{\xi_j}_{(3)}) \phi_N^2(\xi_4) \frac{\xi_4\xi_j}{\Omega^3(\vec{\xi_j}_{(3)})} \prod_{ k=1\atop k\neq j}^4 \widehat{u}_k(\xi_k) \prod_{l=1}^3 \widehat{u}_{j,l}(\xi_{j,l}) \, .
\end{displaymath}

Moreover, observe from the definition of $\mathcal{M}_3^{med}$ in Section \ref{Secmultest} that
\begin{displaymath}
|\xi_1|\sim |\xi_2|\sim |\xi_3|\sim |\xi_4|\sim N \quad \text{and} \quad \left|\frac{\xi_j\xi_4}{\Omega^3(\vec{\xi}_{(3)})}\right| \sim \frac{N}{M_{min(3)}M_{med(3)}} \, ,
\end{displaymath}
on the integration domain of $J_N^j$. Here $M_{max(3)} \ge M_{med(3)} \ge M_{min(3)}$ denote the maximum, sub-maximum and minimum of $\{M_1,M_2,M_3\}$.

Since $\max(|\xi_{j,1}+\xi_{j,2}|, |\xi_{j,1}+\xi_{j,3}|, |\xi_{j,2}+\xi_{j,3}|) \gtrsim N$ on the integration domain of $J_N^j$, we may decompose $\sum_jc_jJ_N^j$ as
\begin{align}
  \sum_{j=1}^4 c_jJ_N^j &= iN^{2s}\left(\sum_{\mathcal{M}_5^{low}} + \sum_{\mathcal{M}_5^{med}} + \sum_{\mathcal{M}_5^{high}}\right) \sum_{j=1}^4c_j \nonumber \\
  &\quad \times \int_{\Gamma^5_t} \phi_{M_1,...,M_6}(\vec{\xi_j}_{(5)}) \phi_N^2(\xi_4) \frac{\xi_4\xi_j}{\Omega^3(\vec{\xi_j}_{(3)})} \prod_{k=1\atop k\neq j}^4 \widehat{u}_k(\xi_k) \prod_{l=1}^3 \widehat{u}_{j,l}(\xi_{j,l}) \nonumber \\
  &:= J_N^{low} + J_N^{med} + J_N^{high} \, , \label{prop-ee.8}
\end{align}
where $\vec{\xi_j}_{(5)}=(\vec{\xi_j}_{(3)},\xi_{j,1},\xi_{j,2},\xi_{j,3}) \in \Gamma^5$.

Moreover, we may assume by symmetry that $M_1 \le M_2 \le M_3$ and $M_4 \le M_5 \le M_6$.

\medskip

\noindent \textit{Estimate for $J^N_{low}$.} In the region $\mathcal{M}^{low}_5$, we have that $M_4 \lesssim M_2$.   Moreover, from Lemma \ref{teclemma}, the integral in $J_N^{low}$ is non trivial for $|\xi_1|\sim \cdots \sim |\xi_4|\sim N$, $|\xi_{j,1}|\sim |\xi_{j,2}| \sim |\xi_{j,3}| \sim N$ and $M_3 \sim M_6 \sim N$. Therefore by using \eqref{prod6.est.1}, we can bound $|J_N^{low}|$ by
\begin{align*}
   & \sum_{j=1}^4 \sum_{N^{-\frac12} < M_1\le M_2\ll N\atop}\sum_{M_4\lesssim M_2 \atop M_4 \le M_5 \ll N} N^{2s}M_1M_4 \frac{N}{M_1M_2} \prod_{k=1\atop k\neq j}^4 \|P_{\sim N}u_k\|_{L^\infty_TL^2_x} \prod_{l=1}^3 \|P_{\sim N}u_{j,l}\|_{L^\infty_TL^2_x}\\
  &\lesssim \sum_{j=1}^4 \prod_{k=1\atop k\neq j}^4 \|P_{\sim N}u_k\|_{L^\infty_TH^s_x} \prod_{l=1}^3 \|P_{\sim N}u_{j,l}\|_{L^\infty_TH^s_x} \, ,
\end{align*}
since $s>1/4$. Thus, we deduce that
\begin{equation} \label{prop-ee.9}
\sum_{N \ge N_0}|J_N^{low}| \lesssim  \sum_{j=1}^4 \prod_{k=1\atop k\neq j}^4 \|u_k\|_{Y^s_T} \prod_{l=1}^3 \|u_{j,l}\|_{Y^s_T} \, .
\end{equation}

\medskip

\noindent \textit{Estimate for $J_N^{high}$.} Proceeding as for $I_N^{high}$, we split $J_N^{high}$ into $J_N^{high,1}+J_N^{high,2}$ to separate the contributions depending on whether $M_4\le N^{-1}$ or $M_4>N^{-1}$.

\noindent
{$ \bullet \;  M_4\le N^{-1}$.} From Lemma \ref{teclemma}, the integral in $J_N^{high,1}$ is non trivial for $|\xi_1|\sim \cdots \sim |\xi_4|\sim N$, $M_3 \sim N$, $N_{max(5)}=\max\{N_{j,1}, N_{j,2}, N_{j,3}\} \gtrsim N$,  $M_4\le N^{-1} $ and $ M_5\sim M_6\sim N_{max(5)}$ . Therefore by using \eqref{prod6.est.1}, we can bound $|J_N^{high,1}|$ by
\begin{align*}
   & \sum_{j=1}^4 \sum_{N^{-\frac12} <M_1\le M_2\ll N \atop}\sum_{M_4\le N^{-1} } \sum_{N_{j,l}}  \frac{N^{2s+1}M_1M_4}{M_1M_2} \prod_{k=1\atop k\neq j}^4 \|P_{\sim N}u_k\|_{L^\infty_TL^2_x} \prod_{l=1}^3 \|P_{N_{j,l}}u_{j,l}\|_{L^\infty_TL^2_x}\\
  &\lesssim \sum_{j=1}^4 \prod_{k=1\atop k\neq j}^4 \|P_{\sim N}u_k\|_{L^\infty_TH^s_x} \prod_{l=1}^3 \|u_{j,l}\|_{L^\infty_TH^s_x} \, ,
\end{align*}
since $s>\frac14$. This leads to
\begin{equation} \label{prop-ee.9b}
\sum_{N \ge N_0}|J_N^{high,1}| \lesssim  \sum_{j=1}^4 \prod_{k=1\atop k\neq j}^4 \|u_k\|_{Y^s_T} \prod_{l=1}^3 \|u_{j,l}\|_{Y^s_T} \, .
\end{equation}

\medskip

\noindent
{$ \bullet \;  M_4> N^{-1}$.} For $1 \le k \le 4$, and $1 \le l \le 3$ let $\tilde{u}_k$ and $\tilde{u}_{j,l}$ be  suitable extensions of $u_k$ and $u_{j,l}$ to $\mathbb R^2$. We define $u_{N_k}=P_{N_k}\tilde{u}_k$, $u_{N_{j,l}}=P_{N_{j,l}}\tilde{u}_{j,l}$ and perform nonhomogeneous dyadic decompositions in $N_k$ and $N_{j,l}$.

We first estimate $J_N^{high,2}$ in the resonant case $M_1M_2M_3\sim M_4M_5M_6$. We assume to simplify the notations that $M_1\le M_2\le M_3$ and $M_4\le M_5\le M_6$. Since we are in $\mathcal{M}_5^{high}$, we have that $M_5,M_6\gtrsim N$ and $M_1,M_2\ll N$ which yields
$$
M_3\sim N \quad \text{and} \quad M_4\sim \frac{M_1M_2N}{M_5M_6}\ll N \, .
$$
This forces $ N_{j,1} \sim N$ and it follows from \eqref{prod6.est.2} that
\begin{align*}
  |J_N^{high,2}| &\lesssim \sum_{j=1}^4\sum_{\mathcal{M}^{high}_5}\sum_{N_{j,l}}\frac{N^{2s+1}}{M_1M_2} M_1M_4^{\frac12}N_{j,2}^{\frac14}N_{j,3}^{\frac14} \prod_{k=1\atop k\neq j}^4 \|P_{\sim N} u_k\|_{L^\infty_TL^2_x} \prod_{l=1}^3\|u_{N_{j,l}}\|_{L^\infty_TL^2_x} \\
  &\lesssim \sum_{j=1}^4 \sum_{N^{-\frac12} \le M_1 \le M_2 \ll N \atop}N^{s+\frac12}\frac{(M_1M_2)^{\frac12}}{M_2} \prod_{k=1\atop k\neq j}^4 \|P_{\sim N} u_k\|_{L^\infty_TL^2_x} \prod_{l=1}^3
  \|u_{j,l}\|_{L^\infty_TH^s_x} \, .
\end{align*}
Summing over $N^{-1/2} \le M_1, \ M_2 \ll N$ and $N \ge N_0$ and using the assumption $s>\frac14$, we get
\begin{equation} \label{prop-ee.10}
\sum_{N \ge N_0}|J_N^{high,2}| \lesssim  \sum_{j=1}^4 \prod_{k=1\atop k\neq j}^4 \|u_k\|_{Y^s_T} \prod_{l=1}^3 \|u_{j,l}\|_{Y^s_T} \, ,
\end{equation}
in the resonant case.

By using \eqref{L25lin.2}, we easily estimate $J_N^{high,2}$ in the non resonant case $M_1M_2M_3\not\sim M_4M_5M_6$ by
\begin{displaymath}
\begin{split}
|J_N^{high,2}| &\lesssim \sum_{j=1}^4\sum_{N^{-\frac12} \le M_1\le M_2 \ll N \atop } \sum_{N^{-1} < M_4 \le N \atop N \lesssim M_5 \le M_6 \lesssim N_{max(5)}}\sum_{N_{j,l}}\\ & \quad \quad \times \frac{N^{2s+1}}{M_1M_2} M_1N_{max(5)}^{-1} \prod_{k=1\atop k\neq j}^4 \| P_{\sim N} \tilde{u}_{k}\|_{Y^0} \prod_{k=1}^3 \|u_{N_{j,l}}\|_{Y^0} \, .
\end{split}
\end{displaymath}
Recalling that $N_{max(5)}=\max\{N_{j,1},N_{j,2},N_{j,3}\} \gtrsim N$, we conclude after summing over $N$  that \eqref{prop-ee.10} also holds, for $ s>\frac14$, in the non resonant case.

\medskip

\medskip

\noindent \textit{Estimate for $\alpha J_N^{med}+\beta K_N$}. Using equations (\ref{eq-u0})-(\ref{eq-u1})-(\ref{eq-u2}) and the resonance relation \eqref{res5}, we can rewrite $N^{2s}\int_0^t\frac{d}{dt}\mathcal{E}_N^5dt$ as
\begin{align*}
&N^{2s}\sum_{\mathcal{M}_5^{med}}\sum_{j=1}^4 c_j \int_{\Gamma^5_t}\phi_{M_1,...,M_6}(\vec{\xi_j}_{(5)}) \phi_N^2(\xi_4) \frac{i\xi_4\xi_j}{\Omega^3(\vec{\xi_j}_{(3)})} \prod_{k=1\atop k\neq j}^4 \widehat{u}_k(\xi_k) \prod_{l=1}^3 \widehat{u}_{j,l}(\xi_{j,l})\\
  &+ N^{2s}\sum_{\mathcal{M}_5^{med}}\sum_{j=1}^4 c_j \sum_{m=1\atop m\neq j}^4 c_m \int_{\Gamma^5_t}\phi_{M_1,...,M_6}(\vec{\xi_j}_{(5)}) \phi_N^2(\xi_4) \frac{\xi_4\xi_j}{\Omega^3(\vec{\xi_j}_{(3)})\Omega^5(\vec{\xi_j}_{(5)}) } \\
  &\quad\quad \times \prod_{k=1\atop k\neq j,m}^4 \widehat{u}_k(\xi_k) \,  \mathcal{F}_x\partial_x(u_{m,1}u_{m,2}u_{m,3})(\xi_m) \prod_{l=1}^3 \widehat{u}_{j,l}(\xi_{j,l})\\
  &+ N^{2s}\sum_{\mathcal{M}_5^{med}}\sum_{j=1}^4 c_j \sum_{m=1}^3 c_{j,m} \int_{\Gamma^5_t}\phi_{M_1,...,M_6}(\vec{\xi_j}_{(5)}) \phi_N^2(\xi_4) \frac{\xi_4\xi_j}{\Omega^3(\vec{\xi_j}_{(3)})\Omega^5(\vec{\xi_j}_{(5)}) }\\
  &\quad\quad \times \prod_{k=1\atop k\neq j}^4 \widehat{u}_k(\xi_k)  \prod_{l=1\atop  l\neq m}^3 \widehat{u}_{j,l}(\xi_{j,l}) \, \mathcal{F}_x\partial_x(u_{j,m,1}u_{j,m,2}u_{j,m,3})(\xi_{j,m})
\\
  &:= K_N^1+K_N^2+K_N^3.
\end{align*}
By choosing $\beta=-\alpha$, we have that
\begin{equation} \label{prop-ee.11}
\alpha J_N^{med} + \beta K_N = \beta(K_N^2+K_N^3) \, .
\end{equation}

For the sake of simplicity, we will only consider the contribution of $K_N^3$ corresponding to a fixed $(j,m) \in \{1,2,3,4\} \times \{1,2,3\}$, since the other contributions on the right-hand side of \eqref{prop-ee.11} can be treated similarly.

Thus, for $(j,m)$ fixed, we need to bound
\begin{displaymath}
\tilde{K}_N:=  iN^{2s}\sum_{\mathcal{M}_5^{med}} \int_{\Gamma^7_t} \sigma(\vec{\xi_j}_{(5)})  \prod_{k=1\atop k\neq j}^4 \widehat{u}_k(\xi_k)  \prod_{l=1\atop  l\neq m}^3 \widehat{u}_{j,l}(\xi_{j,l}) \prod_{n=1}^3\widehat{u}_{j,m,n}(\xi_{j,m,n}) \, ,
\end{displaymath}
with the conventions $\displaystyle{\xi_j=-\sum_{k=1 \atop k \neq j}^4\xi_k=\sum_{l=1}^3\xi_{j,l}}$ and $\displaystyle{\xi_{j,m}=\sum_{n=1}^3\xi_{j,m,n}}$ and where
\begin{displaymath}
\sigma(\vec{\xi_j}_{(5)}) = \phi_{M_1,...,M_6}(\vec{\xi_j}_{(5)}) \, \phi_N^2(\xi_4) \, \frac{\xi_4 \, \xi_j \, \xi_{j,m}}{\Omega^3(\vec{\xi_j}_{(3)}) \, \Omega^5(\vec{\xi_j}_{(5)}) } \, .
\end{displaymath}

Now, let us define $\vec{\xi}_{j,m_{(7)}} \in \Gamma^7$ as follows:
\begin{align*}
\vec{\xi}_{j,1_{(7)}}&=\big(\vec{\xi_j}_{(3)},\xi_{j,2}, \xi_{j,3},\xi_{j,1,1},\xi_{j,1,2},\xi_{j,1,3}\big) \, ,\\ \vec{\xi}_{j,2_{(7)}}&=\big(\vec{\xi_j}_{(3)},\xi_{j,1},\xi_{j,3},\xi_{j,2,1},\xi_{j,2,2},\xi_{j,2,3}\big) \, ,
\\ \vec{\xi}_{j,3_{(7)}}&=\big(\vec{\xi_j}_{(3)},\xi_{j,1},\xi_{j,2},\xi_{j,3,1},\xi_{j,3,2},\xi_{j,3,3}\big) \, .
\end{align*}

We decompose $\tilde{K}_N$ as
\begin{align}
  \tilde{K}_N &= iN^{2s} \left(\sum_{\mathcal{M}_7^{low}} + \sum_{\mathcal{M}_7^{med}}+\sum_{\mathcal{M}_7^{high}}\right)
  %\nonumber \\&\quad  \times
  \int_{\Gamma^7_t} \widetilde{\sigma}(\vec{\xi}_{{j,m}_{(7)}})  \prod_{k=1\atop k\neq j}^4 \widehat{u}_k(\xi_k)  \prod_{l=1\atop  l\neq m}^3 \widehat{u}_{j,l}(\xi_{j,l}) \prod_{n=1}^3\widehat{u}_{j,m,n}(\xi_{j,m,n}) \nonumber\\
  &=\tilde{K}_N^{low} +\tilde{K}_N^{med}+ \tilde{K}_N^{high} \, ,  \label{prop-ee.12}
\end{align}
where
\begin{displaymath}
\widetilde{\sigma}(\vec{\xi}_{{j,m}_{(7)}}) = \phi_{M_7,M_8,M_9}\big(\xi_{j,m,1},\xi_{j,m,2},\xi_{j,m,3}\big) \, \sigma(\vec{\xi_j}_{(5)}) \, .
\end{displaymath}

Observe from Lemma \ref{teclemma} that the integrand is non trivial for
\begin{displaymath}
|\xi_1|\sim\cdots \sim |\xi_4|\sim |\xi_{j,1}| \sim |\xi_{j,2}| \sim |\xi_{j,3}| \sim |\xi_{j,m,1}+\xi_{j,m,2}+\xi_{j,m,3}| \sim N \, .
\end{displaymath}
Moreover, we have
\begin{displaymath}
M_{max(3)} \sim M_{max(5)} \sim N \quad \text{and}  \quad N^{-\frac12} \le M_{min(3)}\le M_{med(3)} \le M_{min(5)} \le M_{med(5)} \ll N \, .
\end{displaymath}
Hence,
\begin{displaymath}
\big|\widetilde{\sigma}(\vec{\xi}_{{j,m}_{(7)}})\big| \sim \frac{N}{{M_{min(3)}M_{med(3)}M_{min(5)}M_{med(5)}}} \, .
\end{displaymath}
Note that we can always assume by symmetry and without loss of generality that $M_1 \le M_2 \le M_3$ and $M_7 \le M_8 \le M_9$.

\medskip
\noindent \textit{Estimate for $\tilde{K}_N^{low}$.} In the integration domain of $\tilde{K}_N^{low}$ we have from Lemma \ref{teclemma} that $|\xi_{j,m,1}|\sim |\xi_{j,m,2}|\sim |\xi_{j,m,3}|\sim N$.

Then it follows applying  \eqref{prod8-est.1} or \eqref{prod8-est.100} (depending on wether $M_{min(5)}=M_6$ or $M_{min(5)}=M_4$ or $M_5$) on the sum over $ (M_8,M_9) $ that
\begin{align*}
|\tilde{K}_N^{low}| &\lesssim \sum_{N^{-\frac12}<M_1 \le M_2\ll N\atop M_2 \ll M_{min(5)} \le M_{med(5)}\ll N}\sum_{M_7 \lesssim M_{med(5)}} \frac{N^{2s+1}M_7}{M_2M_{med(5)}} \\ & \quad \quad \times \prod_{k=1 \atop k \neq j}^4\|P_{\sim N}u_k\|_{L^\infty_TL^2_x} \prod_{l=1 \atop l\neq m}^3 \|P_{\sim N}u_{j,l}\|_{L^\infty_TL^2_x} \prod_{n=1}^3\|P_{\sim N}u_{j,m,n}\|_{L^\infty_TL^2_x} \, .
\end{align*}
This implies that
\begin{equation} \label{prop-ee.13}
\sum_{N \ge N_0}|\tilde{K}_N^{low}| \lesssim \prod_{k=1 \atop k \neq j}^4\|u_k\|_{L^\infty_TH^s_x} \prod_{l=1 \atop l\neq m}^3 \|u_{j,l}\|_{L^\infty_TH^s_x} \prod_{n=1}^3\|u_{j,m,n}\|_{L^\infty_TH^s_x} \, ,
\end{equation}
since $2s+\frac32<8s \Leftrightarrow s>\frac14$.

\medskip
\noindent \textit{Estimate for $\tilde{K}_N^{med}$.}
In the integration domain of $\tilde{K}_N^{med}$ we have from Lemma \ref{teclemma} that $|\xi_{j,m,1}|\sim |\xi_{j,m,2}|\sim |\xi_{j,m,3}|\sim N$.
To estimate $\tilde{K}_N^{med}$, we divide the regions where $M_7 \le 1$ and $M_7 \ge 1$.

In the region where $M_7 \le 1$, we deduce by using
\eqref{prod8-est.1} or \eqref{prod8-est.100} (depending on wether $M_{min(5)}=M_6$ or $M_{min(5)}=M_4$ or $M_5$) on the sum over $ (M_8,M_9) $ that
\begin{align*}
|\tilde{K}_N^{med}| &\lesssim \sum_{N^{-\frac12}<M_1 \le M_2\ll N\atop M_2 \ll M_{min(5)} \le M_{med(5)}\ll N}\sum_{M_7 \le 1} \frac{N^{2s+1}M_7}{M_2M_{med(5)}} \\ & \quad \quad \times \prod_{k=1 \atop k \neq j}^4\|P_{\sim N}u_k\|_{L^\infty_TL^2_x} \prod_{l=1 \atop l\neq m}^3 \|P_{\sim N}u_{j,l}\|_{L^\infty_TL^2_x} \prod_{n=1}^3\|P_{\sim N}u_{j,m,n}\|_{L^\infty_TL^2_x} \, .
\end{align*}
This implies that
\begin{equation} \label{prop-ee.130}
\sum_{N \ge N_0}|\tilde{K}_N^{med}| \lesssim \prod_{k=1 \atop k \neq j}^4\|u_k\|_{L^\infty_TH^s_x} \prod_{l=1 \atop l\neq m}^3 \|u_{j,l}\|_{L^\infty_TH^s_x} \prod_{n=1}^3\|u_{j,m,n}\|_{L^\infty_TH^s_x} \, ,
\end{equation}
since $2s+2<8s \Leftrightarrow s>\frac13$.

In the region where $M_7 \ge 1$, for $1 \le k \le 4$, $k \neq j$, $1 \le l \le 3$, $l \neq m$ and $1 \le n \le 3$ let $\tilde{u}_k$, $\tilde{u}_{j,l}$ and $\tilde{u}_{j,m,n}$ be suitable extensions of $u_k$, $u_{j,l}$ and $u_{j,m,n}$ to $\mathbb R^2$. Then, we deduce from Lemma  \ref{teclemma} and \eqref{L27lin.200} that
\begin{align*}
|\tilde{K}_N^{med}| &\lesssim \sum_{N^{-\frac12}<M_1 \le M_2\ll N\atop M_2 \ll M_{min(5)} \le M_{med(5)}\ll N}\sum_{1 \le M_7 \le M_8} \frac{N^{2s+1}}{M_2M_{med(5)}M_8} \\ & \quad \quad \times \prod_{k=1 \atop k \neq j}^4  \|P_{\sim N} \widetilde{u}_k\|_{Y^0}\prod_{l=1 \atop l\neq m}^3
 \|P_{\sim N} \widetilde{u}_{j,l}\|_{Y^0}
  \prod_{n=1}^3 \|P_{\sim N} \widetilde{u}_{j,m,n}\|_{Y^0} \, .
\end{align*}
This implies that
\begin{equation} \label{prop-ee.1300}
\sum_{N \ge N_0}|\tilde{K}_N^{med}| \lesssim \prod_{k=1 \atop k \neq j}^4\|u_k\|_{Y_T^s} \prod_{l=1 \atop l\neq m}^3 \|u_{j,l}\|_{Y_T^s} \prod_{n=1}^3\|u_{j,m,n}\|_{Y_T^s} \, ,
\end{equation}
since $s>\frac13$.

\medskip
\noindent \textit{Estimate for $\tilde{K}_N^{high}$.}
We first estimate $\tilde{K}_N^{high}$ in the resonant case $M_4M_5M_6\sim M_7M_8M_9$. Since we are in $\mathcal{M}_7^{high}$, we have that $M_9\ge  M_8\gtrsim N$ and $M_{min(5)}\le  M_{med(5)}\ll N$. It follows that $M_{max(5)}\sim N$ and
\begin{displaymath}
M_7\sim \frac{M_{min(5)}M_{med(5)}N}{M_8M_9}\ll N \, .
\end{displaymath}
This forces $N_{j,m,1} \sim N$ (for example) and we deduce by using \eqref{prod8-est.0b} in the case $M_{min(5)}=M_6$, and \eqref{prod8-est.200} in the case $M_{min(5)}=M_4$ or $M_5$, that
\begin{align*}
  |\tilde{K}_N^{high}| &\lesssim \sum_{N^{-\frac12}<M_1 \le M_2 \ll N \atop M_2 \ll M_{min(5)} \le M_{med(5)} \ll N}\sum_{M_9\ge M_8\gtrsim N} \sum_{N_{j,m,n}, N_{j,m,1} \sim N\atop}\frac{N^{2s+\frac32}}{M_2M_8^{\frac12}M_9^{\frac12}} N_{j,m,2}^{\frac14}N_{j,m,3}^{\frac14}\\
   &\quad \times \prod_{k=1 \atop k \neq j}^4 \|P_{\sim N}u_k\|_{L^\infty_TL^2_x} \prod_{l=1 \atop l\neq m}^3\|P_{\sim N}u_{j,l}\|_{L^\infty_TL^2_x} \prod_{n=1}^3\|P_{N_{j,m,n}} u_{j,m,n}\|_{L^{\infty}_TL^2_x} \, ,
\end{align*}
which yields summing over  $N \ge N_0$ and using the assumption $s>\frac14$  that
\begin{equation} \label{prop-ee.14}
  \sum_{N \ge N_0}|\tilde{K}_N^{high}| \lesssim \prod_{k=1 \atop k \neq j}^4\|u_k\|_{Y^s_T} \prod_{l=1 \atop l\neq m}^3 \|u_{j,l}\|_{Y^s_T} \prod_{n=1}^3\|u_{j,m,n}\|_{Y^s_T} \, .
\end{equation}
Now, in the non resonant case we separate the contributions of the region $ M_7\le N^{-1} $ and $ M_7>N^{-1}$.
In the first region,
applying  \eqref{prod8-est.1} or \eqref{prod8-est.100} (depending on wether $M_{min(5)}=M_6$ or $M_{min(5)}=M_4$ or $M_5$) on the sum over $ (M_8,M_9)$, we get
\begin{align*}
  |\tilde{K}_N^{high}| &\lesssim \sum_{N^{-\frac12}<M_1 \le M_2 \ll N \atop M_2 \ll M_{min(5)} \le M_{med(5)} \ll N} \sum_{M_7 \le N^{-1}}\sum_{N_{j,m,n}}\frac{N^{2s+1}M_7}{M_2M_{med(5)}} \\
   &\quad \times \prod_{k=1 \atop k \neq j}^4 \|P_{\sim N}u_k\|_{L^\infty_TL^2_x} \prod_{l=1 \atop l\neq m}^3\|P_{\sim N}u_{j,l}\|_{L^\infty_TL^2_x} \prod_{n=1}^3\|P_{N_{j,m,n}}u_{j,m,n}\|_{L^{\infty}_TL^2_x} \, .
\end{align*}
Observing that $\max\{N_{j,m,1},N_{j,m,2},N_{j,m,3} \} \gtrsim N$, we conclude after summing over $N \ge N_0$ that
\begin{equation} \label{prop-ee.13b}
\sum_{N \ge N_0}|\tilde{K}_N^{high}| \lesssim \prod_{k=1 \atop k \neq j}^4\|u_k\|_{L^\infty_TH^s_x} \prod_{l=1 \atop l\neq m}^3 \|u_{j,l}\|_{L^\infty_TH^s_x} \prod_{n=1}^3\|u_{j,m,n}\|_{L^\infty_TH^s_x} \, ,
\end{equation}
since $2s+1<6s \Leftrightarrow s>\frac14$.

Finally we treat contribution of the region $ M_7>N^{-1}$.
 For $1 \le k \le 4$, $k \neq j$ $1 \le l \le 3$, $l \neq m$ and $1 \le n \le 3$ let $\tilde{u}_k$, $\tilde{u}_{j,l}$ and $\tilde{u}_{j,m,n}$ be suitable extensions of $u_k$, $u_{j,l}$ and $u_{j,m,n}$ to $\mathbb R^2$. We define $u_{N_k}=P_{N_k}\tilde{u}_k$, $u_{N_{j,l}}=P_{N_{j,l}}\tilde{u}_{j,l}$, $u_{N_{j,m,n}}=P_{N_{j,m,n}}\tilde{u}_{j,m,n}$ and perform nonhomogeneous dyadic decompositions in $N_k$, $N_{j,l}$ and $N_{j,m,n}$.
By using \eqref{L27lin.2}, we  estimate $\tilde{K}_N^{high}$ on this region by
\begin{displaymath}
\begin{split}
|\tilde{K}_N^{high}| &\lesssim \sum_{N^{-\frac12}<M_1 \le M_2 \ll N \atop M_2 \ll M_{min(5)} \le M_{med(5)} \ll N} \sum_{N^{-1} \le M_7 \le M_8\le M_9\lesssim N_{max(7)}}\sum_{N_k \sim N}\sum_{N_{j,l} \sim N} \sum_{N_{j,m,n}} \\ & \quad \times \frac{N^{2s+1}}{M_2M_{med(5)}} N_{max(7)}^{-1} \prod_{k=1 \atop k \neq j}^4\|u_{N_k}\|_{Y^0} \prod_{l=1 \atop l\neq m}^3 \|u_{N_{j,l}}\|_{Y^0} \prod_{n=1}^3\|u_{N_{j,m,n}}\|_{Y^0} \, ,
\end{split}
\end{displaymath}
where $N_{max(7)}=\max\{N_{j,m,1},N_{j,m,2},N_{j,m,3} \} \gtrsim N $.
Therefore,  \eqref{prop-ee.14} also holds, for $ s>\frac14$, in this region.

\medskip
Finally, we conclude the proof of Proposition \ref{prop-ee} gathering \eqref{prop-ee.2}--\eqref{prop-ee.14}.
\end{proof}

\begin{remark}
The restriction $s>\frac13$ only appears when estimating the contribution $\widetilde{K}_N^{med}$. All the other contributions are estimated with $s>\frac14$. It is likely that the index $\frac13$ may be improved by adding higher order modifications to the energy.
\end{remark}

\subsection{Estimates for the $X^{s-1,1}_T$ and  $X^{s-\frac{7}{8},\frac{15}{16}}_T$ norms}
In this subsection, we explain how to control the $X^{s-1,1}_T$ and  $X^{s-\frac{7}{8},\frac{15}{16}}_T$ norms that we used in the energy estimates.

We start by deriving   a suitable Strichartz estimate for the solutions of \eqref{eq-u0}.
\begin{proposition} \label{se}
Assume that $0<T \le 1$ and let $u\in L^\infty(]0,T[ \, : H^{\frac14}(\R))$ be a solution to \eqref{eq-u0} with $u_i\in  L^\infty(]0,T[ \, : H^{\frac14}(\R))$, $i=1,2,3$. Then,
\begin{equation} \label{se.1}
\|J_x^\frac{1}{7} u\|_{L^4_TL^{\infty}_x} \lesssim \|u\|_{L^{\infty}_TH^{\frac14}_x}+\prod_{j=1}^3\|u_j\|_{L^{\infty}_TH^{\frac14}_x} \, .
\end{equation}
\end{proposition}

\begin{proof}
Since $J_x^\frac{1}{7} u$ is a solution to \eqref{eq-u0} where we apply $ J_x^\frac{1}{7}$ on the RHS member, we use estimate \eqref{refinedStrichartz1}  with
  $F=J_x^\frac{1}{7}\partial_x(u_1u_2u_3)$ and $\delta=\frac{9}{7}+$. H\"older and Sobolev inequalities  then lead to
\begin{displaymath}
\begin{split}
\|J_x^\frac{1}{7} u\|_{L^4_TL^{\infty}_x} &\lesssim \|u\|_{L^{\infty}_TH^{\frac{3}{14}+}_x}+\|u_1u_2u_3\|_{L^4_TL^1_x} \lesssim \|u\|_{L^{\infty}_TH^\frac{1}{4}_x}+\prod_{j=1}^3\|u_j\|_{L^{\infty}_TH^\frac{1}{6}_x} \, .
\end{split}
\end{displaymath}
\end{proof}
The following proposition ensures that a  $  \widetilde{L^\infty_T}H^s $-solution to  \eqref{eq-u0} belongs to $ Y_T^s $.
\begin{proposition} \label{trilin}
Let $0<T \le 1$,  $s \ge \frac14$ and let $u, \, u_i, \, u_{i,j}, \, u_{i,j,k} \in \widetilde{L^\infty}(]0,T[ \, : H^s(\R)) $, $1 \le i,j,k \le 3$, be solutions to  \eqref{eq-u0}-\eqref{eq-u1}-\eqref{eq-u2} such . Then $ u\in Y_T^s $ and it holds
 \begin{eqnarray}
\|u\|_{Y^s_T} & \lesssim & \|u\|_{\widetilde{L^\infty_T} H^{s}}+\prod_{i=1}^3\|u_i\|_{L^{\infty}_TH^s_x}+\sum_{i=1}^3 \prod_{j=1 \atop j\neq i}^3
\Bigl( \|u_j\|_{L^{\infty}_TH^{\frac14}_x}+\prod_{k=1}^3\|u_{j,k}\|_{L^{\infty}_TH^{\frac14}_x}\Bigr)\|u_i\|_{L^{\infty}_T H^s_x}  \nonumber \\
& & +\sum_{i=1}^3 \prod_{j=1 \atop j\neq i}^3\| u_j\|_{L^\infty_T H^s_x} \Bigl[ \|u_i\|_{L^\infty_T H^{s}} +\sum_{k=1}^3 \prod_{l=1 \atop l\neq i}^3
\Bigl( \|u_{i,l}\|_{L^{\infty}_TH^{\frac14}_x} \nonumber \\ &&\quad \quad \quad \quad \quad \quad \quad \quad \quad \quad \quad \quad  +\prod_{m=1}^3\|u_{i,l,m}\|_{L^{\infty}_TH^{\frac14}_x}\Bigr)\|u_{i,k}\|_{L^{\infty}_T H^s_x} \Bigr]\, .
 \label{trilin.1}
\end{eqnarray}
\end{proposition}

\begin{proof}
In order to prove \eqref{trilin.1}, we have to extend the function $u$ from $ ]0,T[ $ to $ \R $. For this we use the extension operator $ \rho_T $ defined in Lemma \ref{extension}. In view of \eqref{resolution.2}, it remains to control the $ X^{s-1,1}_T $ and $X^{s-\frac{7}{8},\frac{15}{16}}_T$ norms of $ u $ to prove \eqref{trilin.1}.
 We claim that
 \begin{equation} \label{trilin.2}
\|u\|_{X^{s-1,1}_T} \lesssim \|u\|_{L^\infty_T H^{s}_x}+\sum_{i=1}^3 \prod_{j=1 \atop j\neq i}^3\|u_j\|_{L^4_TL^{\infty}_x}\|J^s_xu_i\|_{L^{\infty}_TL^2_x} \,
\end{equation}
and
\begin{eqnarray}
\|u\|_{X^{s-\frac{7}{8},\frac{15}{16}}_T} &\lesssim &  \|u\|_{L^\infty_T H^{s}_x}+\prod_{i=1}^3\|u_i\|_{L^{\infty}_TH^s_x}
+\sum_{i=1}^3 \prod_{j=1 \atop j\neq i}^3\|J_x^\frac{1}{7} u_j\|_{L^4_TL^{\infty}_x}\|J^s_xu_i\|_{L^{\infty}_TL^2_x} \nonumber \\
& & +\sum_{i=1}^3 \prod_{j=1 \atop j\neq i}^3\| u_j\|_{L^\infty_T H^s_x}\|u_i\|_{X^{s-1,1}_T}\,  . \label{trilin.3}
\end{eqnarray}
Noticing that \eqref{trilin.2} also holds for $ u_{k/k=1,2,3} $ with $ u_{l/l=1,2,3}$ replaced by $ u_{k,l} $ in the RHS member, these estimates together  with Proposition \ref{se} lead to  \eqref{trilin.1}.

 We start by proving \eqref{trilin.2}. Consider $\widetilde{u}=\rho_T(u)$ and $\widetilde{u}_i=\rho_T(u_i)$, $i=1,2,3$, the extensions of $u$ and $u_i$, $i=1,2,3$, to $\mathbb R^2$.
Recall the classical estimate
\begin{equation} \label{KatoPonce}
\|fg\|_{H^s} \lesssim \|f\|_{H^s}\|g\|_{L^{\infty}}+\|f\|_{L^{\infty}}\|g\|_{H^s} \, ,
\end{equation}
which holds for all $s \ge 0$, and can be found for instance in \cite{KaPo}.
By using this estimate, the Duhamel formula associated to \eqref{eq-u0} and the standard linear estimates in Bourgain's spaces (c.f. \cite{Bo1}), we get that
\begin{equation} \label{be}
\begin{split}
\|u\|_{X^{s-1,1}_T} \le \|\widetilde{u}\|_{X^{s-1,1}} &\lesssim \|u_0\|_{H^{s-1}}+ \|\partial_x(\widetilde{u}_1\widetilde{u}_2\widetilde{u}_3)\|_{X^{s-1,0}} \\
&  \lesssim \|u_0\|_{H^{s-1}}+\|J_x^s(\widetilde{u}_1\widetilde{u}_2\widetilde{u}_3)\|_{L^2_{x,t}}  \\
& \lesssim \|u\|_{L^\infty_T H^{s-1}_x}+\sum_{i=1}^3 \prod_{j=1 \atop j\neq i}^3\|\widetilde{u}_j\|_{L^4_tL^{\infty}_x}\|J^s_x\widetilde{u}_i\|_{L^{\infty}_tL^2_x} \, ,
\end{split}
\end{equation}
since, according to Remark \ref{rem2}, $u\in C([0,T]; H^{s-1}(\R))$. Therefore, estimate \eqref{trilin.2} follows from \eqref{be}, \eqref{tildenorm} and \eqref{extension.1}.

Let us now tackle  \eqref{trilin.3}. First, as above we have
$$
\|u\|_{X^{s-\frac{7}{8},\frac{15}{16}}_T} \lesssim \|u\|_{L^\infty_T H_x^{s-\frac{7}{8}}} + \| \widetilde{u}_1 \widetilde{u}_2 \widetilde{u}_3  \|_{X_T^{s+\frac{1}{8}, -\frac{1}{16}}}
$$
and it thus suffices to bound
$$
I:=
 \Bigl\|\frac{\langle \xi \rangle^{s+\frac{1}{8}} {\mathcal F}_{t,x} \Bigl( \tilde{u}_1 \tilde{u}_2 \tilde{u}_3\Bigr)}{\langle \tau-\xi^3\rangle^\frac{1}{16}}   \Bigr\|_{L^2(\R^2)}
 $$
 where $\tilde{u}_i=\rho_T(u_i) $, $i=1,2,3$. In the sequel, we drop the tilda to simplify the expression.

 We separate different regions of integration.\\
 {\bf 1.} $|\xi|\le 2^9 $.  The contribution of this region is easily estimated by
 $$
 I\lesssim \prod_{i=1}^3 \|u_i\|_{L^\infty_t L^3_x} \lesssim  \prod_{i=1}^3 \|u_i\|_{L^\infty_t H^{\frac{1}{6}}_x}\; .
 $$
 {\bf 2.}$ |\xi|>2^9 $.\\
 {\bf 2.1} $ |\tau-\xi^3|\ge \frac{\xi^2}{6}  $. By using \eqref{KatoPonce}, the contribution of this region is estimated by
 \begin{eqnarray*}
 I
 & \lesssim & \|u_1 u_2 u_3\|_{L^2_{t}H^s_x} \lesssim  \| \|u_1 u_2 u_3\|_{H^s_x} \|_{L^2_{t}}\\
  &\lesssim & \Bigl\|  \sum_{i=1}^3 \|u_i\|_{H^s_x} \prod_{j=1 \atop j\neq i}^3 \| u_j\|_{L^\infty_x} \Bigr\|_{L^2_t} \\
  & \lesssim &  \sum_{i=1}^3\|u_i\|_{L^\infty_t H^s_x} \prod_{j=1 \atop j\neq i}^3 \| u_j\|_{L^4_t L^\infty_x} \; .
 \end{eqnarray*}
 {\bf 2.2} $ |\tau-\xi^3|<\frac{\xi^2}{6} $.
 We perform  nonhomogeneous dyadic decompositions  $\displaystyle{u_j =\sum_{N_j\ge 0} P_{N_j}u_j}$ with $j=1,2,3$. We assume without loss of generality that $ N_1\ge N_2\ge N_3 $.\\
 {\bf 2.2.1} $ N_1\sim N_2$.
  \begin{eqnarray*}
  I & \lesssim & \sum_{N> 2^9}N^{s+\frac{1}{8}}\sum_{N_1\sim N_2 \gtrsim N} \sum_{N_3\ge 0}  \|P_{N} (P_{N_1} u_1 P_{N_2} u_2 P_{N_3} u_3)\|_{L^2_{tx}}\\
   & \lesssim &\sum_{N> 2^9}\sum_{N_1\sim N_2 \gtrsim N} \sum_{N_3\ge 0}  N_2^{-\frac{1}{56}}  \langle N_3\rangle^{-\frac{1}{7}}  \| J_x^\frac{1}{7} P_{N_2} u_2\|_{L^4_t L^\infty_x}\| J_x^\frac{1}{7} P_{N_3} u_3\|_{L^4_t L^\infty_x} \| P_{N_1} u_1 \|_{L^\infty_t H^s_x} \\
 & \lesssim &  \|u_1\|_{L^\infty_t H^s_x} \| J_x^\frac{1}{7} u_2\|_{L^4_t L^\infty_x}\| J_x^\frac{1}{7}  u_3\|_{L^4_t L^\infty_x} \; .
  \end{eqnarray*}
  {\bf 2.2.2.} $ N_1\gg N_2 $. Then we have $ |\xi_1|\sim |\xi|$ and $ |\Omega_3(\xi_1,\xi_2,\xi_3)|\sim |\xi_2+\xi_3| \xi^2 $. \\
  {\bf 2.2.2.1.} $|\xi_2+\xi_3|< |\xi|^{-1}$. Then by Plancherel and H\"older inequality,
   \begin{eqnarray*}
  I & \lesssim & \sum_{N>2^9} \sum_{0\le N_3\le N_2\ll N_1\sim N} N^{s+\frac{1}{8}} \|P_{N_1} u_1\|_{L^2_t L^2_x} N^{-1} 
  \prod_{i=2}^3 \|P_{N_i} u_i \|_{L^\infty_t L^2_x} \\
  & \lesssim &  \|u_1\|_{L^\infty_t H^s_x}  \prod_{i=2}^3 \| u_i \|_{L^\infty_t L^2_x} \
   \end{eqnarray*}
  {\bf 2.2.2.2.}  $|\xi_2+\xi_3|\ge |\xi|^{-1}$. We perform a dyadic decomposition in $ M_1\sim |\xi_2+\xi_3| $ and to evaluate the contribution for $ M_1$ and $ N\sim N_1$ fixed, we rewrite $ u_i $, $i=1,2,3$,  as
  $$
  u_i=
  Q_{\gtrsim M_1 N^2} u_i+  Q_{\ll M_1 N^2} u_i
  $$
  The contribution of all the terms that contains $Q_{\gtrsim M_1 N^2} u_1 $ can be estimated by
   \begin{eqnarray*}
  I & \lesssim & \sum_{N> 2^9}N^{s+\frac{1}{8}}\sum_{N^{-1}\lesssim M_1\ll N}\sum_{0\le N_3\le N_2\ll N  }
  \frac{M_1}{M_1 N^2} \|Q_{\gtrsim M_1 N^2} P_{\sim N} u_1\|_{X^{0,1}} \prod_{i=2}^3 \|P_{N_i} u_i \|_{L^\infty_t L^2_x} \\
   & \lesssim & \|u_1\|_{X^{s-1,1}} \prod_{i=2}^3 \| u_i \|_{L^\infty_t H^s_x} \; .
  \end{eqnarray*}
  The contributions of other terms that contain at least one projector $Q_{\gtrsim M_1 N^2}$ can be estimated in the same way thanks to \eqref{QL.1}.\\
 It remains to estimate the contribution of terms that contain only the projector $Q_{\ll M_1 N^2}$. Since $ \Omega_3\gtrsim M_1 N^2 $ and $ |\tau-\xi^3|<\frac{\xi^2}{6} $, we infer that for those terms it holds $ |\tau-\xi^3|\gtrsim M_1 N^2 $ with $ N^{-1} \lesssim M_1\lesssim 1 $. Therefore, by almost-orthogonality,
\begin{displaymath}
\begin{split}
I^2 & \lesssim  \sum_{N> 2^9} \Bigl[\sum_{N^{-1}\lesssim M_1\lesssim 1}\sum_{0\le N_3\le N_2\ll N }
  \frac{M_1 N^{s+\frac{1}{8}}}{(M_1 N^2)^\frac{1}{16}} \|Q_{\ll M_1 N^2} P_{\sim N} u_1\|_{L^2_{tx}} \\ & \quad \quad \quad \quad \quad \quad \times \prod_{i=2}^3 \|Q_{\ll M_1 N^2}  P_{N_i} u_i \|_{L^\infty_t L^2_x} \Bigr]^2\\
   & \lesssim   \prod_{i=2}^3 \| u_i \|_{L^\infty_t H^{0+}_x}^2 \sum_{N>2^9} \|P_{\sim N} u_1\|_{L^2_t H^s}^2\lesssim
     \prod_{i=1}^3 \| u_i \|_{L^\infty_t H^{s}_x}^2 \, .
\end{split}
\end{displaymath}
\end{proof}

\section{Proof of Theorem \ref{maintheo}} \label{Secmaintheo}
Fix $s>\frac13$. First it is worth noticing that we can always assume that we deal with data that have  small $ H^s $-norm.
Indeed,  if $u$ is a solution to the IVP \eqref{mKdV} on the time
interval $[0,T]$ then,  for every $0<\lambda<\infty $,
$u_{\lambda}(x,t)=\lambda u(\lambda x,\lambda^3t)$ is also a
solution to the equation in \eqref{mKdV} on the time interval
$[0,\lambda^{-3}T]$ with initial data $u_{0,\lambda}=\lambda
u_{0}(\lambda \cdot)$.  For  $\varepsilon>0 $ let us denote by $ \mathcal{B}^s(\varepsilon) $ the ball of $ H^s(\mathbb R)$, centered at the origin with radius $ \varepsilon $. Since
$$\|u_{\lambda}(\cdot,0)\|_{H^s} \lesssim\lambda^{\frac12}(1+\lambda^s)\|u_0\|_{H^s},$$  we see that we can  force $u_{0,\lambda}$  to belong to $ \mathcal{B}^s(\epsilon)$ by
choosing $\lambda \sim \min( \varepsilon^2\|u_0\|_{H^s}^{-2},1) $.
Therefore the existence and uniqueness of a solution of \eqref{mKdV} on the time interval $ [0,1] $ for small $ H^s$-initial data will ensure the existence of a unique solution $u$ to \eqref{mKdV} for arbitrary large $H^s$-initial data  on the time
interval $T\sim \lambda^3 \sim \min( \|u_0\|_{H^s}^{-6},1)$.

\subsection{Existence}
First, we begin by deriving  \textit{a priori} estimates on
smooth solutions associated to  initial data $u_0\in H^{\infty}(\mathbb R)$ that is small in $H^s(\mathbb R) $. It is known from the classical well-posedness theory that such an initial data gives rise to a global solution $u \in C(\mathbb R; H^{\infty}(\mathbb R))$ to the Cauchy problem \eqref{mKdV}.

We then deduce gathering estimates \eqref{lem-Est.1}, \eqref{lem-Est.100}, \eqref{prop-ee.1} and \eqref{trilin.1}  that
\begin{displaymath}
\|u\|_{\widetilde{L^{\infty}_T} H^s_x}^2 \lesssim \|u_0\|_{H^s}^2 \big(1+\|u_0\|_{H^s}^2 \big)^2+\|u\|_{L^{\infty}_T H^s_x}^4 \big(1+\|u\|_{L^{\infty}_T H^s_x}^2\big)^{34} \, ,
\end{displaymath}
for any $0<T \le 1$.
Moreover, observe that $\lim_{T \to 0} \|u\|_{\widetilde{L^{\infty}_T}  H^s_x}=\|u_0\|_{H^s}$. Therefore, it follows by using a continuity argument that there exists $\epsilon_0>0$ and $C_0>0$ such that
\begin{equation} \label{maintheo.2}
 \|u\|_{\widetilde{L^{\infty}_T}  H^s_x}\le  C_0\|u_0\|_{H^s} \quad \text{provided} \quad \|u_0\|_{H^s}  \le \epsilon_0 \, .
 \end{equation}

Now, let $u_1$ and $u_2$ be two solutions to the equation in \eqref{mKdV} in ${\widetilde{L^{\infty}_T}  H^s_x}$ for some $0<T\le 1$ emanating respectively from $u_1(\cdot,0)=\varphi_1$ and $u_2(\cdot,0)=\varphi_2$. We also assume that
\begin{equation} \label{maintheo.3}
 \|u_i\|_{\widetilde{L^{\infty}_T}  H^s_x}  \le C_0 \epsilon_0, \quad \text{for} \ i=1,2 \, .
\end{equation}

Let us define $w=u_1-u_2$ and $z=u_1+u_2$. Then $(w,z)$ solves
\begin{equation} \label{diffmKdV}
\left\{ \begin{array}{l}
\partial_tw+\partial_x^3w+ \frac {3\kappa}4\partial_x(z^2w)+\frac {\kappa}4 \partial_x(w^3)=0 \, , \\
\partial_tz+\partial_x^3z+\frac {\kappa}4\partial_x(z^3) + \frac{3\kappa}4\partial_x(zw^2) =0\, .
\end{array}\right.
\end{equation}
Therefore, it follows from  \eqref{lem-Est.1}, \eqref{prop-ee.1} and \eqref{trilin.1}  that $ u_1, u_2\in Y^s_T $ and
\begin{equation} \label{maintheo.4}
\|u_1-u_2\|_{L^{\infty}_TH^s_x}\lesssim  \|u_1-u_2\|_{\widetilde{L^{\infty}_T} H^s_x}\lesssim \|\varphi_1-\varphi_2\|_{H^s} \, .
\end{equation}
provided $u_1$ and $u_2$ satisfy \eqref{maintheo.3}.

\begin{remark}
Observe that no smoothness assumption on $u_1$ and $u_2$ is needed for estimate \eqref{maintheo.4} to hold. We only need $u_1$ and $u_2$ to be two weak solutions of mKdV in the sense of Definition \ref{def}, which is ensured by Remark \ref{rem2}, since $u_1$ and $u_2$ belong to $\widetilde{L^{\infty}}(]0,T[ \, : H^s(\mathbb R))$.
\end{remark}

We are going to apply \eqref{maintheo.4} to construct our solutions.
 Let $ u_0 \in H^s $ with $ s>1/3$ satisfying $\|u_0\|_{H^s}\le  \varepsilon_0$. We denote by $ u_N $ the solution of \eqref{mKdV} emanating from $ P_{\le N} u_0$ for any dyadic integer $ N\ge 1$. Since $ P_{\le N} u_0 \in H^{\infty}(\mathbb R)$, there exists a solution $u_N$ of \eqref{mKdV} satisfying
$$u_{N} \in C(\mathbb R : H^{\infty}(\mathbb R)) \quad \text{and} \quad u_{N}(\cdot,0)=P_{\le N} u_{0} \, .$$
We observe that $\|u_{0,N}\|_{H^s} \le \|u_0\|_{H^s} \le \epsilon_0$. Thus, it follows from \eqref{maintheo.2}-\eqref{maintheo.4} that for any couple of dyadic integers $ (N,M) $ with $ M<N$,
$$\|u_{N}-u_{M}\|_{\widetilde{L^{\infty}_1} H^s_x} \lesssim \|(P_{\le N}-P_{\le M})u_{0}\|_{H^s}
\underset{M \to +\infty}{\longrightarrow} 0 \, .$$
Therefore $\{u_{N}\}$ is a Cauchy sequence in $C([0,1]; H^s(\mathbb R)) \cap \widetilde{L^{\infty}}(]0,1[\, : H^s(\mathbb R))$ which converges to a solution $u \in C([0,1] ; H^s(\mathbb R)) \cap \widetilde{L^{\infty}}(]0,1[\, : H^s(\mathbb R))$ of \eqref{mKdV}. Moreover, it is clear from Propositions  \ref{se} and \ref{trilin}  that $u$ belongs to the class \eqref{maintheo.1}.
\subsection{Uniqueness}
Next, we state our uniqueness result.
\begin{lemma} \label{uniqueness}
Let $ s>\frac 13$ and
 let $u_1$ and $u_2$ be two solutions of  \eqref{mKdV} in $L^{\infty}_TH^s_x$ for some $T>0$ and satisfying $u_1(\cdot,0)=u_2(\cdot,0)=\varphi$. Then $u_1=u_2$ on $[-T,T]$.
\end{lemma}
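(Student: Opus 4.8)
The plan is to reduce the lemma to the Lipschitz bound \eqref{maintheo.4} on the difference of two solutions, applied at a regularity level slightly below $s$. The point is that although $u_1$ and $u_2$ are only assumed to lie in $L^\infty_TH^s_x$, for any exponent $s'$ with $\tfrac14<s'<s$ they automatically belong to the resolution space $Z^{s'}_{T'}$ on a short interval, and at the level $s'$ the machinery of the existence part applies verbatim. First I would make the usual reductions. The conclusion being local in time — once $u_1\equiv u_2$ on $[0,T']$ one restarts from the common value $u_1(T')=u_2(T')\in H^s_x$ and iterates finitely many times, while the part $[-T,0]$ follows by applying the argument to $(x,t)\mapsto u_i(-x,-t)$, which still solve \eqref{mKdV} — it suffices to prove $u_1=u_2$ on a small interval whose length depends only on $\sup_{[0,T]}\|u_i\|_{H^s_x}$. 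Using the scaling $u\mapsto\lambda u(\lambda x,\lambda^3t)$ exactly as at the beginning of Section \ref{Secmaintheo}, we may moreover assume that the common datum $\varphi$, and hence the solutions on the rescaled interval (which we take to be $[0,1]$), are as small in $H^s_x$ as we wish, say $\|u_i\|_{L^\infty_1 H^s_x}\le\varepsilon$.

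Now fix $s'\in(\tfrac14,s)$. Since $\|P_Nu_i(t)\|_{L^2_x}\lesssim \langle N\rangle^{-s}\|u_i(t)\|_{H^s_x}$, summing the convergent geometric series $\sum_NN^{2(s'-s)}$ gives $\|u_i\|_{\widetilde{L^\infty_1}H^{s'}_x}^2=\sum_NN^{2s'}\sup_{[0,1]}\|P_Nu_i(t)\|_{L^2_x}^2\lesssim\varepsilon^2$, so the Besov refinement is available \emph{for free} at level $s'$. Moreover Proposition \ref{se} (applied with $s$) yields $\|u_i\|_{L^4_1L^\infty_x}\lesssim\varepsilon$, and then Proposition \ref{trilin} (applied with $s'$) yields $\|u_i\|_{X^{s'-1,1}_1}\lesssim\varepsilon$; hence $u_i\in Z^{s'}_1$ with $\|u_i\|_{Z^{s'}_1}\lesssim\varepsilon$. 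Then $w=u_1-u_2$ and $z=u_1+u_2$ solve the system \eqref{diffmKdV}, which is of the general form \eqref{eq-u0}--\eqref{eq-u2} with every function that occurs lying in $\{w,z\}\subset Z^{s'}_1$. Therefore Lemma \ref{lem-EsT}, Proposition \ref{prop-ee}, Proposition \ref{trilin} and Proposition \ref{se} all apply at the regularity level $s'>\tfrac14$, and — the smallness $\|u_i\|_{Z^{s'}_1}\lesssim\varepsilon$ being used to absorb the nonlinear contributions, each of which contains at least one factor of $w$ — the same chain of estimates that produced \eqref{maintheo.4} gives $\|w\|_{\widetilde{L^\infty_1}H^{s'}_x}\lesssim\|u_1(\cdot,0)-u_2(\cdot,0)\|_{H^{s'}_x}=0$. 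Hence $u_1\equiv u_2$ on $[0,1]$, and undoing the scaling and iterating yields $u_1=u_2$ on $[-T,T]$.

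The main obstacle — and the reason the lemma is not immediate from \eqref{maintheo.4}, which on its face requires $u_i\in\widetilde{L^\infty_T}H^s_x$ with small norm — is precisely the passage from a solution in $\widetilde{L^\infty_T}H^s_x$ to one merely in $L^\infty_TH^s_x$. The device above handles it by two observations: first, running the energy/difference estimate at an exponent $s'$ strictly below $s$ is harmless, since the estimate at level $s'$ still forces $w\equiv0$; and second, at that level the two nontrivial a~priori bounds needed to be in the resolution space, namely $\widetilde{L^\infty}H^{s'}_x$ and $X^{s'-1,1}_T$, are automatic from $u_i\in L^\infty_TH^s_x$ (the first by the loss $s'<s$, the second via Propositions \ref{se} and \ref{trilin}). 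One should of course check that the smallness reductions are compatible with this choice of $s'$, which is immediate since the constant in $\|u_i\|_{\widetilde{L^\infty_1}H^{s'}_x}\lesssim\|u_i\|_{L^\infty_1 H^s_x}$ depends only on $s-s'$.
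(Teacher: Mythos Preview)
Your proof is correct and follows essentially the same route as the paper's: the key device---dropping to an intermediate exponent $s'\in(\tfrac14,s)$ so that $L^\infty_TH^s_x\hookrightarrow\widetilde{L^\infty_T}H^{s'}_x$ via the summable factor $N^{2(s'-s)}$, combined with scaling to gain smallness---is exactly the paper's argument. You are somewhat more explicit than the paper in checking that the rescaled solutions actually lie in the full resolution space $Z^{s'}$ (invoking Propositions~\ref{se} and~\ref{trilin} directly), and you iterate in time rather than applying the difference estimate once on the whole rescaled interval; both are harmless variations of the same idea.
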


\begin{proof} Let us define $K=\max\{\|u_1\|_{L^{\infty}_TH^s_x},\|u_2\|_{L^{\infty}_TH^s_x}\}$. Let $s'$ be a real number satisfying $\frac13<s'<s$. We get by using the uniform boundedness of $P_N$ in $L^{\infty}_TH^s_x$ that
\begin{equation} \label{uniqueness.1}
\|u_i\|_{\widetilde{L^{\infty}_T}H^{s'}_x}\lesssim  \Big(\sum_{N}N^{2(s'-s)} \Big)^{\frac12} \|u_i\|_{L^{\infty}_TH^s_x}\lesssim \|u_i\|_{L^{\infty}_TH^s_x} \, ,
\end{equation}
for $i=1,2$.

As explained above, we use the scaling property of \eqref{mKdV} and define $u_{i,\lambda}(x,t)=\lambda u_i(\lambda x,\lambda^3 t)$. Then, $u_{i,\lambda}$ are solutions to the equation in \eqref{mKdV} on the time interval $[-S,S]$ with $S=\lambda^{-3} T$ and with the same initial data $\varphi_{\lambda}=\lambda\varphi(\lambda\cdot)$. Thus, we deduce from \eqref{uniqueness.1} that
\begin{equation} \label{uniqueness.2}
\|u_{i,\lambda}\|_{\widetilde{L^{\infty}_S}H^{s'}_x} \lesssim \lambda^{\frac12}(1+\lambda^{s'})\|u_i\|_{\widetilde{L^{\infty}_T}H^{s'}_x} \lesssim \lambda^{\frac12}(1+\lambda^{s'})K, \quad \text{for} \quad i=1,2 \, .
\end{equation}
Thus, we can always choose $\lambda=\lambda>0$ small enough such that $\|u_{i,\lambda}\|_{\widetilde{L^{\infty}_S}H^{s'}_x} \le C_0 \epsilon$ with $0<\epsilon \le \epsilon_1$. Therefore, it follows from \eqref{maintheo.4} that $u_{\lambda,1} = u_{\lambda,2}$ on $[0,\min\{S,1\}]$. This concludes the proof of Lemma \ref{uniqueness} by reverting the change of variable and repeating this procedure a finite number of times.
\end{proof}

Finally, the Lipschitz bound on the flow is a consequence of estimate \eqref{maintheo.4}.

\section{\textit{A priori} estimates in $H^s$ for $s>0$} \label{Secsecondtheo}

Let $u$ be a smooth solution of \eqref{mKdV} defined in the time interval $[0,T]$ with $0<T\le 1$. Fix $0<s \le \frac13$. The aim of this section is to derive estimates for $u$ in
 the function space $ Z_T^s $ where $ Z^s $ is the Banach space endowed with the norm
 \begin{equation} \label{defZs}
 \|u\|_{Z^s}:=\|u\|_{\widetilde{L^\infty_t} H^s_x} +\|u\|_{X^{s-1,1}}  \; .
 \end{equation}
\subsection{Estimate for the $X^{s-1,1}_T$ and $L^4_TL^{\infty}_x$ norms}
\begin{proposition} \label{apriori.se}
Assume that $0<T \le 1$ and $s > 0$. Let $u\in L^\infty_T H^s_x \cap L^4_T L^\infty_x $ be a solution to \eqref{mKdV}. Then,
\begin{equation} \label{apriori.se.1}
\|u\|_{L^4_TL^{\infty}_x} \lesssim \|u\|_{L^{\infty}_TH^s_x}+\|u\|_{L^4_TL^{\infty}_x}\|u\|_{L^{\infty}_TH^s_x}^2 \, .
\end{equation}
\end{proposition}

\begin{proof}
Since $u$ is a solution to \eqref{mKdV} we use estimate \eqref{refinedStrichartz1}  with $F=\partial_x(u^3)$ and $ \delta=1+ $ to obtain
\begin{displaymath}
\begin{split}
\|u\|_{L^4_TL^{\infty}_x} &\lesssim \|u\|_{L^{\infty}_TH^{0+}_x}+\|u^3\|_{L^4_TL^1_x} \lesssim \|u\|_{L^{\infty}_TH^{0+}_x}+
\|u\|_{L^4_TL^{\infty}_x}\|u\|_{L^{\infty}_T L^2_x}^2  \, .
\end{split}
\end{displaymath}
\end{proof}

\begin{proposition} \label{apriori.triline}
Assume that $0<T \le 1$ and $s > 0$. Let $u\in \widetilde{L^\infty_T} H^s_x \cap L^4_T L^\infty_x $  be a solution to \eqref{mKdV}. Then, $u\in Z^s_T $ and
\begin{equation} \label{apriori.triline.1}
\|u\|_{Z^s_T} \lesssim \|u\|_{\widetilde{L^\infty_T} H^s_x}+\Bigl( \|u\|_{L^{\infty}_TH^{s}_x}+
\|u\|_{L^4_TL^{\infty}_x}\|u\|_{L^{\infty}_T L^2_x}^2\Bigr)^2\|u\|_{L^{\infty}_TH^s_x} \, .
\end{equation}
\end{proposition}

\begin{proof} We extend $u $ on $ \R $ by using the extension operator $ \rho_T $ defined in \eqref{defrho}. According to Lemma \ref{extension}, $ \rho_T $ is bounded, uniformly in $ 0<T<1$,  from
 $ \widetilde{L^\infty_T} H^s_x\cap X^{s-1,1}_T $ into $ Z^s $.
In view of  \eqref{apriori.se.1}, it suffices to prove that
$$
\|u\|_{X^{s-1,1}_T} \lesssim \|u_0\|_{H^s}+\|u\|_{L^4_TL^{\infty}_x}^2\|u\|_{L^{\infty}_TH^s_x} \, .
$$
This estimate can be proven in  exactly the same way  as the one of Proposition \ref{trilin}.
\end{proof}

\subsection{Integration by parts} In this Section, we will use the notations of Section \ref{Secmultest}. We also denote
$\displaystyle{m=\min_{1 \le i \neq j \le 3} |\xi_i+\xi_j|}$ and
\begin{equation} \label{m2}
A_j=\big\{(\xi_1,\xi_2,\xi_3) \in \mathbb R^3 \, : \, |\sum_{k=1\atop k \neq j}^3\xi_k|=m \big\}, \quad \text{for} \quad j=1,2,3 \, .
\end{equation}
Then, it is clear from the definition that
\begin{equation} \label{m3}
\sum_{j=1}^3 \chi_{A_j}(\xi_1,\xi_2,\xi_3)=1, \quad \textit{a.e.} \ (\xi_1,\xi_2,\xi_3) \in \mathbb R^3 \, .
\end{equation}

For $\eta\in L^\infty$, let us define the trilinear pseudo-product operator $\widetilde{\Pi}^{(j)}_{\eta,M}$ in Fourier variables by
\begin{equation} \label{def.pseudoproduct.ee.1}
\mathcal{F}_x\big(\widetilde{\Pi}^{(j)}_{\eta,M}(u_1,u_2,u_3) \big)(\xi)=\int_{\Gamma^2(\xi)}(\chi_{A_j}\eta)(\xi_1,\xi_2,\xi_3)\phi_{M}(\sum_{k=1\atop k \neq j}^3\xi_k)\prod_{l=1}^3\widehat{u}_l(\xi_l) \, .
\end{equation}
Moreover, if the functions $u_l$ are real-valued, the Plancherel identity yields
\begin{equation} \label{def.pseudoproduct.ee.2}
\int_{\mathbb R} \widetilde{\Pi}^{(j)}_{\eta,M}(u_1,u_2,u_3) \, u_4 \, dx=\int_{\Gamma^3}(\chi_{A_j}\eta)(\xi_1,\xi_2,\xi_3)\phi_{M}(\sum_{k=1\atop k \neq j}^3\xi_k) \prod_{l=1}^4 \widehat{u}_l(\xi_l) \, .
\end{equation}

Next, we derive a technical lemma involving the pseudo-products which will be useful in the derivation of the energy estimates.
\begin{lemma} \label{technical.pseudoproduct}
Let $N$ and $M$ be two homogeneous dyadic numbers satisfying $N \gg 1$. Then, for $M \ll N$, it holds
\begin{equation} \label{technical.pseudoproduct.2}
\int_{\mathbb R} P_N \widetilde{\Pi}^{(3)}_{1,M}(f_1,f_2,g) \, P_N\partial_x g \, dx = M\sum_{N_3 \sim N}\int_{\mathbb R} \widetilde{\Pi}_{\eta_3,M}^{(3)}(f_1,f_2,P_{N_3}g) \, P_Ng \, dx ,
\end{equation}
for any real-valued functions $f_1, \, f_2, \, g \in L^2(\mathbb R)$ and
where $\eta_3$ is a function of $(\xi_1,\xi_2,\xi_3)$ whose $L^{\infty}-$norm is uniformly bounded in $N$ and $M$.
\end{lemma}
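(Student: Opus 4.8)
The plan is to pass to Fourier variables, symmetrize the resulting integral over the hyperplane $\Gamma^3$ using the structure of $\chi_{A_3}$, and then reinterpret the outcome as a pseudo-product.

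First I would use \eqref{def.pseudoproduct.ee.2} together with Plancherel's identity to write the left-hand side of \eqref{technical.pseudoproduct.2} as
\begin{equation*}
\int_{\Gamma^3} \chi_{A_3}(\xi_1,\xi_2,\xi_3)\, \phi_M(\xi_1+\xi_2)\, \phi_N(\xi_4)^2\, (i\xi_4)\, \widehat{f}_1(\xi_1)\widehat{f}_2(\xi_2)\widehat{g}(\xi_3)\widehat{g}(\xi_4)\, ,
\end{equation*}
where I have used that $\xi_1+\xi_2+\xi_3=-\xi_4$ on $\Gamma^3$ and that $\phi_N$ is even, so that the outer $P_N$ together with the $P_N$ in $P_N\partial_x g$ contribute the factor $\phi_N(\xi_4)^2(i\xi_4)$. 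On the support of the integrand one has $|\xi_1+\xi_2|\sim M$, $|\xi_4|\sim N$ and hence $|\xi_3|\sim N$ since $M\ll N$; in particular $\widehat g(\xi_3)$ may be replaced by $\sum_{N_3\sim N}\widehat{P_{N_3}g}(\xi_3)$, which accounts for the dyadic sum in the right-hand side of \eqref{technical.pseudoproduct.2}.

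Next I would symmetrize the integrand with respect to the exchange $\xi_3\leftrightarrow\xi_4$. This is legitimate: the measure on $\Gamma^3$ and the factor $\widehat g(\xi_3)\widehat g(\xi_4)$ are invariant, $\phi_M(\xi_1+\xi_2)$ does not involve $\xi_3,\xi_4$, and $\chi_{A_3}(\xi_1,\xi_2,\xi_3)=\chi_{A_3}(\xi_1,\xi_2,\xi_4)$ on $\Gamma^3$ — indeed, membership of $(\xi_1,\xi_2,\xi_3)$ in $A_3$ means $|\xi_1+\xi_2|\le|\xi_1+\xi_3|$ and $|\xi_1+\xi_2|\le|\xi_2+\xi_3|$, which by $\xi_1+\xi_3=-(\xi_2+\xi_4)$ and $\xi_2+\xi_3=-(\xi_1+\xi_4)$ is precisely membership of $(\xi_1,\xi_2,\xi_4)$ in $A_3$. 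After symmetrization the symbol $\phi_N(\xi_4)^2(i\xi_4)$ is replaced by
\begin{equation*}
\frac i2\bigl(\xi_4\phi_N(\xi_4)^2+\xi_3\phi_N(\xi_3)^2\bigr)=\frac i4(\xi_3+\xi_4)\bigl(\phi_N(\xi_3)^2+\phi_N(\xi_4)^2\bigr)+\frac i4(\xi_3-\xi_4)\bigl(\phi_N(\xi_3)^2-\phi_N(\xi_4)^2\bigr)\, .
\end{equation*}

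The crux of the argument — and the main obstacle — is to see that this expression is $O(M)$ on the support, even though $\xi_4$ carries a full power of $N$. For the first term this is immediate, since $\xi_3+\xi_4=-(\xi_1+\xi_2)$ has modulus $\lesssim M$ on $\supp\phi_M$. For the second term, $|\xi_3-\xi_4|\lesssim N$ (both $|\xi_3|$ and $|\xi_4|$ are $\sim N$), while $|\phi_N(\xi_3)^2-\phi_N(\xi_4)^2|\lesssim M/N$: using that $\phi_N$ is even, $\phi_N(\xi_4)=\phi_N(\xi_1+\xi_2+\xi_3)$, so the mean value theorem together with $\|(\phi_N^2)'\|_{L^\infty}\lesssim N^{-1}$ and $|\xi_1+\xi_2|\lesssim M$ yields the claimed bound. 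Hence the symmetrized symbol equals $M$ times a function $\eta_3$ which, through $\xi_4=-(\xi_1+\xi_2+\xi_3)$, depends only on $(\xi_1,\xi_2,\xi_3)$ and is bounded in $L^\infty$ uniformly in $N$ and $M$. Factoring out this $M$, inserting $\sum_{N_3\sim N}\phi_{N_3}(\xi_3)=1$ on the support, and running \eqref{def.pseudoproduct.ee.2} backwards then reorganizes the integral into $M\sum_{N_3\sim N}\int_{\mathbb R}\widetilde{\Pi}^{(3)}_{\eta_3,M}(f_1,f_2,P_{N_3}g)\,P_Ng\,dx$, which is \eqref{technical.pseudoproduct.2}; this last reindexing is routine once one observes that all of $\xi_3,\xi_4$ live at frequency $\sim N$ there.
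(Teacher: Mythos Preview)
Your proof is correct and uses the same key ingredients as the paper --- the invariance of $\chi_{A_3}$ under the exchange $\xi_3\leftrightarrow\xi_4$ on $\Gamma^3$, the smallness of $\xi_1+\xi_2$ on $\supp\phi_M$, and the mean value theorem applied to $\phi_N^2$. The paper organizes the computation slightly differently: it first splits $\xi_4\phi_N(\xi_4)^2$ as $(\xi_1+\xi_2)\phi_N(\xi_4)^2+\xi_3\phi_N(\xi_4)^2$, peels off the MVT term $\xi_3(\phi_N(\xi_4)-\phi_N(\xi_3))\phi_N(\xi_4)$, and only then invokes the $\xi_3\leftrightarrow\xi_4$ symmetry on the residual $\xi_3\phi_N(\xi_3)\phi_N(\xi_4)$ piece; your version symmetrizes at the outset and then decomposes, which is a bit cleaner but amounts to the same argument.
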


\begin{proof} Let us denote by $T_{M,N}(f_1,f_2,g,g)$ the left-hand side of \eqref{technical.pseudoproduct.2}. From Plancherel's identity we have
\begin{equation*}
\begin{split}
&T_{M,N}(f_1,f_2,g,g) \\ & \quad=\int_{\mathbb R^3} \chi_{A_3}(\xi_1,\xi_2,\xi_3)\phi_M(\xi_1+\xi_2)\xi\phi_N(\xi)^2\widehat{f}_1(\xi_1)\widehat{f}_2(\xi_2)\widehat{g}(\xi_3)\overline{\widehat{g}(\xi)}d\tilde{\xi} \, ,
\end{split}
\end{equation*}
where $\xi=\xi_1+\xi_2+\xi_3$ and $d\tilde{\xi}=d\xi_1d\xi_2d\xi_3$.
We use that $\xi=\xi_1+\xi_2+\xi_3$ to decompose $T_{M,N}(f_1,f_2,g,g)$ as follows.
\begin{equation} \label{technical.pseudoproduct.3}
\begin{split}
T_{M,N}(f_1,f_2,g,g) &=M\sum_{\frac{N}2\le N_3 \le 2N}\int_{\mathbb R} \widetilde{\Pi}_{\tilde{\eta}_1,M}^{(3)}(f_1,f_2,P_{N_3}g)P_{N}gdx \\ & \quad +M\sum_{\frac{N}2\le N_3 \le 2N}\int_{\mathbb R} \widetilde{\Pi}_{\tilde{\eta}_2,M}^{(3)}(f_1,f_2,P_{N_3}g)P_{N}gdx\\ & \quad +\widetilde{T}_{M,N}(f_1,f_2,g,g)\, ,
\end{split}
\end{equation}
where
\begin{displaymath}
\tilde{\eta}_1(\xi_1,\xi_2,\xi_3)=\phi_N(\xi)\frac{\xi_1+\xi_2}M\chi_{\supp \phi_M}(\xi_1+\xi_2) \, ,
\end{displaymath}
\begin{displaymath}
\tilde{\eta}_2(\xi_1,\xi_2,\xi_3)=\frac{\phi_N(\xi)-\phi_N(\xi_3)}M\xi_3\chi_{\supp \phi_M}(\xi_1+\xi_2) \, ,
\end{displaymath}
and
\begin{displaymath}
\begin{split}
\widetilde{T}_{M,N}(f_1,f_2,g,g)=\int_{\mathbb R^3} \chi_{A_3}(\xi_1,\xi_2,\xi_3)\phi_M(\xi_1+\xi_2)\xi_3\widehat{f}_1(\xi_1)\widehat{f}_2(\xi_2)\widehat{g_N}(\xi_3)\overline{\widehat{g_N}(\xi)}d\tilde{\xi}\,
\end{split}
\end{displaymath}
with the notation $g_N=P_Ng$.

First, observe from the mean value theorem and the frequency localization that
$\tilde{\eta}_1$ and $\tilde{\eta}_2$ are
uniformly bounded in $M$ and $N$.

Next, we deal with $\widetilde{T}_{M,N}(f_1,f_2,g,g)$. By using that $\xi_3=\xi-(\xi_1+\xi_2)$ observe that
\begin{displaymath}
\begin{split}
\widetilde{T}_{M,N}(f_1,f_2,g,g)&=-\int_{\mathbb R^3}\chi_{A_3}(\xi_1,\xi_2,\xi_3)\phi_M(\xi_1+\xi_2)(\xi_1+\xi_2)\widehat{f_1}(\xi_1)\widehat{f_2}(\xi_2)\widehat{g_N}(\xi_3)\overline{\widehat{g_N}(\xi)}d\tilde{\xi}\\ & \quad +S_{M,N}(f_1,f_2,g,g)
\end{split}
\end{displaymath}
with
\begin{displaymath}
S_{M,N}(f_1,f_2,g,g)=\int_{\mathbb R^3}\chi_{A_3}(\xi_1,\xi_2,\xi_3)\phi_M(\xi_1+\xi_2)\widehat{f_1}(\xi_1)\widehat{f_2}(\xi_2)\widehat{g_{N}}(\xi_3)\xi\overline{\widehat{g_N}(\xi)}d\tilde{\xi} \, .
\end{displaymath}
Since $g$ is real-valued, we have $\overline{\widehat{g_N}(\xi)}=\widehat{g_N}(-\xi)$, so that
\begin{displaymath}
S_{M,N}(f_1,f_2,g,g)=\int_{\mathbb R^3}\chi_{A_3}(\xi_1,\xi_2,\xi_3)\phi_M(\xi_1+\xi_2)\widehat{f_1}(\xi_1)\widehat{f_2}(\xi_2)\overline{\widehat{g_{N}}(-\xi_3)}\xi\widehat{g_N}(-\xi)d\tilde{\xi} \, .
\end{displaymath}
We change variable $\hat{\xi_3}=-\xi=-(\xi_1+\xi_2+\xi_3)$, so that $-\xi_3=\xi_1+\xi_2+\hat{\xi}_3$. Thus, $S_{M,N}(f_1,f_2,g,g)$ can be rewritten as
\begin{displaymath}
-\int_{\mathbb R^3}\chi_{A_3}(\xi_1,\xi_2,-\xi_1-\xi_2-\hat{\xi}_3)\phi_M(\xi_1+\xi_2)\widehat{f_1}(\xi_1)\widehat{f_2}(\xi_2)\hat{\xi}_3\widehat{g_N}(\hat{\xi}_3)\overline{\widehat{g_{N}}(\xi_1+\xi_2+\hat{\xi}_3)}d\hat{\xi} \, ,
\end{displaymath}
where $d\hat{\xi}=d\xi_1d\xi_2d\hat{\xi}_3$. Now, observe that $|\xi_1+(-\xi_1-\xi_2-\hat{\xi}_3)|=|\xi_2+\hat{\xi}_3|$ and $|\xi_2+(-\xi_1-\xi_2-\hat{\xi}_3)|=|\xi_1+\hat{\xi}_3|$. Thus $\chi_{A_3}(\xi_1,\xi_2,-\xi_1-\xi_2-\hat{\xi}_3)=\chi_{A_3}(\xi_1,\xi_2,\hat{\xi}_3)$ and we obtain
\begin{displaymath}
S_{M,N}(f_1,f_2,g,g)=-\widetilde{T}_{M,N}(f_1,f_2,g,g) \, ,
\end{displaymath}
so that
\begin{equation} \label{technical.pseudoproduct.4}
\widetilde{T}_{M,N}(f_1,f_2,g,g)= M\int_{\mathbb R} \Pi_{\eta_2,M}^3(f_1,f_2,P_{N}g)P_Ngdx
\end{equation}
where
\begin{displaymath}
\eta_2(\xi_1,\xi_2,\xi_3)=-\frac12\frac{\xi_1+\xi_2}M\chi_{\supp \phi_M}(\xi_1+\xi_2)
\end{displaymath}
is also uniformly bounded function in $M$ and $N$.

Finally, we define $\eta_1=\tilde{\eta}_1+\tilde{\eta}_2$ and $\eta_3=\eta_1+\eta_2$. Therefore the proof of \eqref{technical.pseudoproduct.2} follows gathering \eqref{technical.pseudoproduct.3} and \eqref{technical.pseudoproduct.4}.
\end{proof}

Finally, we state a $L^2$-trilinear estimate involving the $X^{-1,1}$-norm and whose proof is similar to the one of Proposition \ref{L2trilin}.
\begin{proposition} \label{apriori.L2trilin}
Assume that $0<T \le 1$, $\eta$ is a bounded function and $u_i$ are real-valued functions in $Z^0=X^{-1,1} \cap L^{\infty}_tL^2_x$ with time support in $[0,2]$ and spatial Fourier support in $I_{N_i}$ for $i=1,\cdots 4$. Here, $N_i$ denote nonhomogeneous dyadic numbers. Assume also that $N_{max}\gg 1$, and $m=\min_{1 \le i \neq j \le 3}|\xi_i+\xi_j| \sim M \ge 1$.
Then
\begin{equation} \label{apriori.L2trilin.2}
\Big| \int_{\R \times [0,T]}\widetilde{\Pi}^{(3)}_{\eta,M}(u_1,u_2,u_3) \, u_4 \, dxdt \Big| \lesssim M^{-1} \prod_{i=1}^4(\|u_i\|_{X^{-1,1}}+\|u_i\|_{L^{\infty}_tL^2_x}) \, .
\end{equation}
Moreover, the implicit constant in estimate \eqref{apriori.L2trilin.2} only depends on the $L^{\infty}$-norm of the function $\eta$.
 \end{proposition}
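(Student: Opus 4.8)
The plan is to follow, almost verbatim, the proof of Proposition~\ref{L2trilin}. Write $G$ for the left-hand side of \eqref{apriori.L2trilin.2}, and for a parameter $R>0$ to be fixed use the decomposition \eqref{1T} to split $G=G^{low}_R+G^{high}_R$, obtained by inserting $1^{low}_{T,R}$, resp.\ $1^{high}_{T,R}$, in front of the space-time integral. For the high-modulation part, H\"older's inequality in time, the identity \eqref{def.pseudoproduct.ee.2}, Lemma~\ref{prod4-est} applied with the cut-off $\phi_M(\xi_1+\xi_2)$ (recall $|\chi_{A_3}|\le 1$ and $\eta$ is bounded), together with \eqref{high}, yield
\[
|G^{high}_R|\lesssim \|1^{high}_{T,R}\|_{L^1}\,M\prod_{i=1}^4\|u_i\|_{L^\infty_tL^2_x}\lesssim R^{-1}M\prod_{i=1}^4\|u_i\|_{L^\infty_tL^2_x}\, ,
\]
so that choosing $R=M^2$ bounds $G^{high}_R$ by $M^{-1}\prod_i\|u_i\|_{Y^0}$.

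For $G^{low}_R$ I would decompose each of the four factors with respect to the modulation variable. The key input is the resonance estimate. Since the integrand is supported on $A_3$, we have $|\xi_1+\xi_2|=m\sim M$, with $m\le|\xi_1+\xi_3|$ and $m\le|\xi_2+\xi_3|$; writing $2\xi_1=(\xi_1+\xi_2)+(\xi_1-\xi_2)$ and $2\xi_2=(\xi_1+\xi_2)-(\xi_1-\xi_2)$ with $\xi_1-\xi_2=(\xi_1+\xi_3)-(\xi_2+\xi_3)$, and using $|\xi_i+\xi_3|\ge|\xi_3|-|\xi_i|$, the triangle inequality gives $\max(|\xi_1+\xi_3|,|\xi_2+\xi_3|)\gtrsim\max(|\xi_1|,|\xi_2|,|\xi_3|)$. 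Since on $\Gamma^3$ one has $N_4\sim|\xi_1+\xi_2+\xi_3|\lesssim\max(|\xi_1|,|\xi_2|,|\xi_3|)$, it follows that $\max(|\xi_1+\xi_3|,|\xi_2+\xi_3|)\gtrsim N_{max}$, whence, by \eqref{res3},
\[
|\Omega^3(\xi_1,\xi_2,\xi_3)|=3|\xi_1+\xi_2|\,|\xi_1+\xi_3|\,|\xi_2+\xi_3|\gtrsim M^2N_{max}\, .
\]
In particular $L_{max}\gtrsim M^2N_{max}\gg R=M^2$, since $M\ge1$ and $N_{max}\gg1$. From here the argument is the same as in Proposition~\ref{L2trilin}: in the case $L_{max}=L_1$ one uses \eqref{ihigh-lem.1} (legitimate as $L_1\gg R$), Lemma~\ref{QL}, Lemma~\ref{prod4-est}, the fact that the $u_i$ have time support in $[0,2]$, and the Cauchy--Schwarz inequality in $L_1$ against the $X^{-1,1}$ weight, to obtain
\[
|G^{low}_R|\lesssim M\sum_{L_1\gtrsim M^2N_{max}}\|Q_{\sim L_1}u_1\|_{L^2_{x,t}}\prod_{i=2}^4\|u_i\|_{L^\infty_tL^2_x}\lesssim M\,(M^2N_{max})^{-1}\,N_1\,\|u_1\|_{X^{-1,1}}\prod_{i=2}^4\|u_i\|_{L^\infty_tL^2_x}\, ,
\]
which is $\lesssim M^{-1}\prod_i\|u_i\|_{Y^0}$ because $N_1\le N_{max}$. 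The cases $L_{max}\in\{L_2,L_3,L_4\}$ are treated identically, the only change being that the factor carrying the cut-off $1^{low}_{T,R}$ now sits at a modulation $\le L_{max}$, so one only needs \eqref{high2} and Lemma~\ref{QL} for it. Collecting the two contributions proves \eqref{apriori.L2trilin.2}.

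The only step demanding genuine attention is the resonance bound $|\Omega^3|\gtrsim M^2N_{max}$ on the \emph{full} support of $\widetilde{\Pi}^{(3)}_{\eta,M}$: it is precisely here that the restriction to $A_3$ (making $|\xi_1+\xi_2|$ the smallest pairwise sum) together with the hypothesis $m\sim M\ge1$ are used; without them one would only recover the weaker $|\Omega^3|\gtrsim M^3$, which is not enough when $N_{max}\gg M$. Everything else is a direct transcription of the proof of Proposition~\ref{L2trilin}.
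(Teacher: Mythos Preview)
Your proof is correct and is precisely the adaptation of Proposition~\ref{L2trilin} that the paper has in mind (the paper gives no details beyond ``similar to the proof of Proposition~\ref{L2trilin}''). The key point you isolate---the resonance lower bound $|\Omega^3|\gtrsim M^2N_{max}$ on the support of $\widetilde{\Pi}^{(3)}_{\eta,M}$, coming from $|\xi_1+\xi_2|\sim M$, $\min(|\xi_1+\xi_3|,|\xi_2+\xi_3|)\ge m\sim M$, and $\max(|\xi_1+\xi_3|,|\xi_2+\xi_3|)\gtrsim N_{max}$---together with the choice $R=M^2$ (so that $L_{max}\gtrsim M^2N_{max}\gg R$ thanks to $N_{max}\gg1$) is exactly right. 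One small remark: your justification of $\max(|\xi_1+\xi_3|,|\xi_2+\xi_3|)\gtrsim N_{max}$ can be streamlined by noting that for any $l\in\{1,2,3,4\}$ (with $\xi_4=-(\xi_1+\xi_2+\xi_3)$) one has $|\xi_1+\xi_2|+|\xi_1+\xi_3|+|\xi_2+\xi_3|\ge 2|\xi_l|$, hence $3\max_{i<j}|\xi_i+\xi_j|\ge 2N_{max}$; since on $A_3$ the maximum of the three pairwise sums equals $\max(|\xi_1+\xi_3|,|\xi_2+\xi_3|)$, the claim follows.
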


\subsection{Energy estimates}
The aim of this subsection is to prove the following energy estimates for the solutions of \eqref{mKdV}.
\begin{proposition} \label{apriori.ee}
Assume that $0<T \le 1$ and $s > 0$. Let $u\in Z^s_T\cap L^4_T L^\infty_x$ be a solution to \eqref{mKdV}. Then,
\begin{equation} \label{apriori.ee.0}
\|u\|_{\widetilde{L^{\infty}_T}H^s_x}^2 \lesssim \|u_0\|_{H^s}^2+(\|u\|_{L^4_T L^\infty_x}^2+\|u\|_{Z^s_T}^2)\|u\|_{Z^s_T}^2 \, ,
\end{equation}
where $\|\cdot\|_{Z^s_T}$ is defined in \eqref{defZs}.
\end{proposition}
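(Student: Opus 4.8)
The plan is to mimic the structure of the proof of Proposition \ref{prop-ee}, but working with the single equation \eqref{mKdV} (so $u_1=u_2=u_3=u$ and $c_4=\kappa$), and crucially exploiting the symmetry of the cubic nonlinearity $\partial_x(u^3)$ to place the derivative on a low-frequency factor via the integration-by-parts Lemma \ref{technical.pseudoproduct}. First I would set up a modified energy at each dyadic block $N$: for $N\le N_0$ take $\mathcal E_N(t)=\frac12\|P_Nu\|_{L^2_x}^2$, and for $N>N_0$ add a single cubic correction $\mathcal E_N^{3}(t)$ built in Fourier variables using the resonance function $\Omega^3$, designed so that the linear part of $\frac{d}{dt}\mathcal E_N^3$ cancels the resonant term $I_N^{med}$ (and the relevant pieces of $I_N^{high}$). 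Unlike in Proposition \ref{prop-ee} we will \emph{not} need the second (fifth-order) correction $\mathcal E_N^5$, nor the $\mathcal E_N^{3,high}$ refinement: since the three interior factors are all equal to $u$, the lack-of-symmetry obstruction described in the introduction disappears, and Lemma \ref{technical.pseudoproduct} lets us trade the outer derivative $\partial_x$ against a gain of $M=M_{min}$ (the size of the low output frequency) directly, without resonance considerations. Then I define $E^s_T(u)=\sum_N N^{2s}\sup_{[0,T]}|\mathcal E_N(t)|$ and establish coercivity $\|u\|_{\widetilde{L^\infty_T}H^s_x}^2\lesssim E^s_T(u)+\|u\|_{\widetilde{L^\infty_T}H^s_x}^4$ exactly as in Lemma \ref{lem-EsT}, using Lemma \ref{prod4-est} to bound the cubic correction by $\sum_{M\ll N} N^{2s+1}M^{-1}M_{med}^{-1}M\prod\|P_{\sim N}u\|_{L^2}$, which sums because $s>0$ (here one only needs $2s+1<4s$-type inequalities coming from $M_{med}\sim N$, hence $s>0$ suffices — this is where the weaker hypothesis $s>\frac1{10}$ rather than $s>\frac14$ comes from, the $\frac1{10}$ being forced only by Proposition \ref{apriori.se}).

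The core of the argument is the differential inequality for $N^{2s}\mathcal E_N(t)$. Differentiating and integrating on $[0,t]$, for $N>N_0$ I get $N^{2s}\mathcal E_N(t)=N^{2s}\mathcal E_N(0)+\kappa I_N+\alpha J_N$ where $I_N=N^{2s}\int_{\R_t}P_N\partial_x(u^3)P_Nu$ and $J_N=N^{2s}\int_0^t\frac{d}{dt}\mathcal E_N^3$. I split $I_N$ dyadically in the internal output frequencies $M_1,M_2,M_3$ into the pieces $I_N^{low}$, $I_N^{med}$, $I_N^{high,k}$ as in \eqref{prop-ee.4}. For $I_N^{low}$ and $I_N^{high,1}$ one argues exactly as in \eqref{prop-ee.5}--\eqref{prop-ee.5b}. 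For the genuinely resonant piece $I_N^{med}$, the correction is chosen so $\alpha=\kappa$ cancels the linear contribution of $J_N$; the remaining nonlinear contribution $J_N^{nl}$ is then a $5$-linear term with all frequencies $\sim N$ and a symbol of size $N/(M_{min}M_{med})$, which by Lemma \ref{prod6-est} (estimate \eqref{prod6.est.1}) is bounded by $\sum N^{2s} M_{min}M_4\cdot N/(M_{min}M_{med})\prod\|P_{\sim N}u\|_{L^2}$ and sums since $s>0$ (mirroring \eqref{prop-ee.9}). For the high pieces $I_N^{high,2}$ and $I_N^{high,3}$, this is where I invoke Lemma \ref{technical.pseudoproduct}: writing the outer $\partial_x$ in Fourier and integrating by parts in $\xi$, the derivative $\xi_4\sim N$ gets replaced by $M_{min}\cdot(\text{bounded symbol})$ up to summing over $N_3\sim N$, after which Proposition \ref{apriori.L2trilin} (with the extension operator $\rho_T$ of Lemma \ref{extension} to handle the time cutoff $1_T$, as in the proof of Proposition \ref{L2trilin}) gives a factor $M_{min}\cdot M_{min}^{-1}=1$ against $\prod\|u_i\|_{Y^0}$, wait — more precisely it gives the bound $M_{min}^{-1}$ from \eqref{apriori.L2trilin.2} times the $M_{min}$ from the integration by parts and a factor $N^{-1}$ effectively, yielding summability after distributing $N^{2s}$ over the four $H^s$ norms. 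Assembling all pieces gives $E^s_T(u)\lesssim E^s_0(u)+\|u\|_{Y^s_T}^4$.

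Finally I combine the two halves: coercivity gives $\|u\|_{\widetilde{L^\infty_T}H^s_x}^2\lesssim E^s_T(u)+\|u\|_{\widetilde{L^\infty_T}H^s_x}^4$, the energy estimate gives $E^s_T(u)\lesssim \|u_0\|_{H^s}^2+\|u\|_{Y^s_T}^4$, and since $\|u\|_{Y^s_T}\le\|u\|_{Z^s_T}$ and (by the $N\le N_0$ analysis and $L^2$ conservation) $E^s_0(u)\sim\|u_0\|_{H^s}^2$, I obtain $\|u\|_{\widetilde{L^\infty_T}H^s_x}^2\lesssim\|u_0\|_{H^s}^2+\|u\|_{Z^s_T}^4$, hence $\|u\|_{\widetilde{L^\infty_T}H^s_x}\lesssim\|u_0\|_{H^s}+\|u\|_{Z^s_T}^2$, which is \eqref{apriori.ee.0}. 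The main obstacle I anticipate is bookkeeping in the high-frequency pieces $I_N^{high,2}$, $I_N^{high,3}$: making the integration-by-parts identity of Lemma \ref{technical.pseudoproduct} interface cleanly with the time-localization decomposition $1_T=1_{T,R}^{low}+1_{T,R}^{high}$ and the modulation analysis of Proposition \ref{apriori.L2trilin}, and checking that the resulting powers of $M_{min}$, $N$ and $N_{max}$ combine to something summable in both the dyadic variables $M_i$ and in $N$ — all the actual smallness we need is just $s>0$ here, but the combinatorics of which $M$'s can be summed against which gains must be tracked carefully, exactly as in the $A_N^j$ analysis of Proposition \ref{prop-ee}, only with the simplification afforded by the full symmetry.
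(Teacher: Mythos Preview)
Your approach does not achieve the stated conclusion for $s>0$; as written it would only yield $s>\frac14$. Several of the steps you import from Proposition~\ref{prop-ee} genuinely require $s>\frac14$: the bound on $I_N^{low}$ in \eqref{prop-ee.5} uses $2s+\frac12<4s$; the coercivity of the cubic correction $\mathcal{E}_N^{3,med}$ in \eqref{lem-Est.3} uses the lower bound $M_{min}>N^{-1/2}$, which after summation in $M_{med}$ gives the same constraint; and the $5$-linear nonlinear contribution $J_N^{low}$ in \eqref{prop-ee.9} again needs $s>\frac14$. Your parenthetical that ``$2s+1<4s$-type inequalities'' give $s>0$ is arithmetically wrong (that inequality is $s>\frac12$), and in any case the actual exponents arising from the $\mathcal{M}_3^{med}$ region are precisely the ones forcing $s>\frac14$. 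So the modified-energy route, as you sketched it, cannot reach $s>0$.

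The paper's proof is both simpler and genuinely different: it uses \emph{no modified energy at all}. For the single solution $u$ (as opposed to a difference of solutions), the symmetry lets Lemma~\ref{technical.pseudoproduct} do all the work. One decomposes $L_N(u)=N^{2s}\int_{\R_t} P_N\partial_x(u^3)\,P_Nu$ according to a single dyadic parameter $M\sim m=\min_{i\neq j}|\xi_i+\xi_j|$ into $L_N^{low}$ ($M<1$), $L_N^{med}$ ($1\le M\ll N$), and $L_N^{high}$ ($M\gtrsim N$). For $L_N^{high}$, Proposition~\ref{apriori.L2trilin} gives a factor $M^{-1}\lesssim N^{-1}$ directly, cancelling the derivative. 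For $L_N^{med}$, Lemma~\ref{technical.pseudoproduct} trades the outer derivative for a factor $M$, and then Proposition~\ref{apriori.L2trilin} gives $M^{-1}$, so the two cancel and one sums trivially over $M$. For $L_N^{low}$, Lemma~\ref{technical.pseudoproduct} gives a factor $M$ and Lemma~\ref{prod4-est} gives another $M$, so the resulting $M^2$ sums over $M\le\frac12$. Every piece then closes with only $s>0$. Your instinct to use Lemma~\ref{technical.pseudoproduct} was correct, but it should be applied across the \emph{entire} range $M\ll N$ (both ``low'' and ``med''), not only on the ``high'' pieces after a modified-energy cancellation of the ``med'' pieces; that is exactly what removes the $s>\frac14$ barrier and makes the fifth-order correction unnecessary.
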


\begin{proof} Observe from the definition that
\begin{equation} \label{apriori.ee.1}
\|u\|_{\widetilde{L^{\infty}_T}H^s_x}^2 \sim  \sum_{N}N^{2s}\|P_Nu\|_{L^{\infty}_TL^2_x}^2
\end{equation}
Moreover, by using \eqref{mKdV}, we have
\begin{displaymath}
\frac12\frac{d}{dt}\|P_Nu(\cdot,t)\|_{L^2_x}^2 = \int_{\mathbb R} \big(P_N\partial_x(u^3)P_Nu\big) (x,t)dx \, .
\end{displaymath}
which yields after integration in time between $0$ and $t$ and summation over $N$
\begin{equation} \label{apriori.ee.3}
\|u\|_{\widetilde{L^{\infty}_T}H^s_x}^2\lesssim \|u_0\|_{H^s}^2+\sum_{N}\sup_{t \in [0,T]} \big| L_N(u)\big|\, ,
\end{equation}
where
\begin{equation} \label{apriori.ee.3.0}
L_N(u)=N^{2s} \int_{\mathbb R \times [0,t]} P_N\partial_x(u^3) \, P_Nu \, dx ds \, .
\end{equation}

In the case where $N \lesssim 1$, H\"older's inequality  leads to
\begin{equation} \label{apriori.ee.4}
\sum_{N \lesssim 1}\big| L_N(u)\big| \lesssim \|u\|_{L^4_TL^{\infty}_x}^2\|u\|_{L^{\infty}_TL^2_x}^2\lesssim   \|u\|_{L^4_TL^{\infty}_x}^2 \|u\|_{Z^s_T}^2 \, .
\end{equation}

\medskip
In the following, we can then assume that $N \gg 1$. By using the decomposition in \eqref{m3}, we get that $L_{N}(u)=\sum_{j=1}^3L_{N}^{(j)}(u)$ with
\begin{displaymath}
L_{N}^{(j)}(u)=N^{2s}\sum_{M}\int_{\mathbb R \times [0,t]}P_N \widetilde{\Pi}_{1,M}^{(j)}(u,u,u) \, P_N\partial_xu \, dx ds \, ,
\end{displaymath}
where we performed a homogeneous dyadic decomposition in $m \sim M$.
Thus, by symmetry, it is enough to estimate $L_{N}^{(3)}(u)$, that still will be denoted $L_N(u)$ for the sake of simplicity.

We decompose $L_{N}(u)$ depending on wether $M <1$, $1 \le M \ll N$ and $M \gtrsim N$. Thus
\begin{align}
L_N(u)&=N^{2s}\Big(\sum_{M \gtrsim N}+\sum_{1 \le M \ll N}+\sum_{M \le \frac12}\Big)\int_{\mathbb R \times [0,t]}P_N \widetilde{\Pi}_{1,M}^{(3)}(u,u,u) \, P_N\partial_xu \, dx ds
\nonumber \\&=:L_N^{high}(u)+L_N^{med}(u)+L_N^{low}(u) \, .  \label{apriori.ee.5}
\end{align}

\medskip

\noindent \textit{Estimate for $L_N^{high}(u)$.} Let $\tilde{u}=\rho_T(u)$ be the extension of $u$ to $\mathbb R^2$ defined in \eqref{defrho}.  Now we define $u_{N_i}=P_{N_i}\tilde{u}$, for $i=1,2,3$, $u_N=P_N\tilde{u}$ and perform dyadic decompositions in $N_i$, $i=1,2,3$, so that
\begin{displaymath}
L_N^{high}(u)=N^{2s}\sum_{M \gtrsim N}\sum_{N_1, N_2, N_3} \int_{\mathbb R \times [0,t]}P_N \widetilde{\Pi}_{1,M}^{(3)}(u_{N_1},u_{N_2},u_{N_3}) \, P_N\partial_xu \, dx ds  \, .
\end{displaymath}
Define $$\eta_{high}(\xi_1,\xi_2,\xi_3)=\frac{\xi}N\phi_N(\xi) \, .$$
It is clear that $\eta_{high}$ is uniformly bounded in $M$ and $N$. Thus, by using estimate  \eqref{apriori.L2trilin.2}, we have that
\begin{align}
\big|L_N^{high}(u)\big| &\lesssim N^{2s}\sum_{M \gtrsim N}\sum_{N_1, N_2, N_3}N\Big|\int_{\mathbb R \times [0,t]}P_N \widetilde{\Pi}_{\eta_{high},M}^{(3)}(u_{N_1},u_{N_2},u_{N_3}) \, P_N\partial_xu \, dx ds\Big| \nonumber \\ & \lesssim  N^{2s}\|u_N\|_{Z^0} \sum_{N_1,N_2,N_3}\prod_{i=1}^3\|u_{N_i}\|_{Z^0}\, , \label{apriori.ee.6}
\end{align}
since $\sum_{M \gtrsim N}N/M \lesssim 1$. Let us denote $N_{max}, \ N_{med}$ and $N_{min}$ the maximum, sub-maximum and minimum of $N_1, \ N_2, \ N_3$. It follows then from the frequency localization that $N \lesssim N_{med} \sim N_{max}$. Thus, we deduce summing \eqref{apriori.ee.6} over $N$, using the Cauchy-Schwarz inequality in $N_1, \ N_2, \ N_3$ and $N$ that
\begin{equation} \label{apriori.ee.7}
\sum_{N \gg 1}\big|L_N^{high}(u)\big|  \lesssim  \|\tilde{u}\|_{Z^s}^4 \lesssim \|u\|_{Z^s_T}^4 \, ,
\end{equation}
since $s>0$. \\

\noindent \textit{Estimate for $L_N^{med}(u)$.} To estimate $L^{med}(u)$, we decompose $\int_{\mathbb R} P_N \widetilde{\Pi}^{(3)}_{1,M}(u,u,u) \, P_N\partial_x u$ as in \eqref{technical.pseudoproduct.2}, since we are in the case $1 \le M \ll N$ and $N \gg 1$.

Once again, let $\tilde{u}=\rho_T(u)$ be the extension of $u$ to $\mathbb R^2$ defined in \eqref{defrho} and $u_{N_i}=P_{N_i}\tilde{u}$, for $i=1,2,3$, $u_N=P_N\tilde{u}$. Observe from the frequency localization that $N_3 \sim N$. We perform dyadic decompositions in $N_i$, $i=1,2,3$ and deduce from \eqref{technical.pseudoproduct.2} that
\begin{displaymath}
\big|L_N^{med}(u)\big| \lesssim N^{2s}\sum_{1 \le M \ll N}\sum_{N_1, N_2}\sum_{N_3 \sim N}M\Big|\int_{\mathbb R \times [0,t]}P_N \widetilde{\Pi}_{\eta_3,M}^{(3)}(u_{N_1},u_{N_2},u_{N_3}) \, P_N\partial_xu \, dx ds\Big| \, ,
\end{displaymath}
where $\eta_3$\footnote{see the proof of Lemma \ref{technical.pseudoproduct} for a definition of $\eta_3$.} is uniformly bounded  in the range of summation of $M, \, N, \, N_1, \, N_2$ and $N_3$. Then, we deduce from \eqref{apriori.L2trilin.2}  that
\begin{equation} \label{apriori.ee.10}
\big|L_N^{med}(u)\big| \lesssim \sum_{1 \le  M \ll N}\sum_{N_1,N_2}\sum_{N_3 \sim N}\|u_{N_1}\|_{Z^0}\|u_{N_2}\|_{Z^0}\|u_{N_3}\|_{Z^s}\|u_N\|_{Z^s} \, .
\end{equation}
Observe that $\max\{N_1,N_2 \} \gtrsim M$. Therefore, we deduce after summing \eqref{apriori.ee.10} over $N \sim N_3 \gg 1$, $N_1$, $N_2$ and $M$ that
\begin{equation} \label{apriori.ee.11}
\sum_{N \gg 1}\big|L_N^{med}(u)\big|  \lesssim  \|\tilde{u}\|_{Z^s}^4 \lesssim \|u\|_{Z^s_T}^4 \, ,
\end{equation}
since $s>0$. Note that in the last step we used that $ \|\tilde{u}\|_{Z^s}^2 \sim \sum_{N} \|\tilde{u}_N\|_{Z^s}^2 $.

\medskip

\noindent \textit{Estimate for $L_N^{low}$.} In this case, we also have $N \gg 1$ and $M \ll N$. Thus the decomposition in \eqref{technical.pseudoproduct.2} yields
\begin{displaymath}
L_N^{low}(u)=N^{2s}\sum_{M \le \frac12}M\sum_{N_3 \sim N}\int_{\mathbb R \times [0,t]}\widetilde{\Pi}_{\eta_3,M}^{(3)}(u,u,P_{N_3}u)P_Nu \, dxds  \, ,
\end{displaymath}
where $\eta_3$ is defined in the proof of Lemma \ref{technical.pseudoproduct}. Since $\eta_3$ is uniformly bounded in $N$ and $M$, we deduce from \eqref{prod4-est} and H\"older's inequality in time (recall here that $0<t \le T \le 1$) that
\begin{displaymath}
\begin{split}
\big|L_N^{low}(u) \big| &
 \lesssim N^{2s}\sum_{M \le 1/2}M^2\|u\|_{L^{\infty}_TL^2_x}^2\sum_{N_3 \sim N}\|P_{N_3}u\|_{L^{\infty}_TL^2_x} \|P_Nu\|_{L^{\infty}_TL^2_x} \, .
\end{split}
\end{displaymath}
Thus, we infer that
\begin{equation} \label{apriori.ee.12}
\sum_{N \gg 1}\big|L_N^{low}(u)\big|  \lesssim   \|u\|_{L^{\infty}_TL^2_x}^2\|u\|_{\widetilde{L^{\infty}_T}H^s_x}^2 \lesssim \|u\|_{Z^s_T}^4 \, .
\end{equation}

Finally, we conclude the proof of estimate \eqref{apriori.ee.0} gathering \eqref{apriori.ee.3}, \eqref{apriori.ee.4}, \eqref{apriori.ee.5}, \eqref{apriori.ee.7}, \eqref{apriori.ee.11} and \eqref{apriori.ee.12}.
\end{proof}

\subsection{Proof of Theorem \ref{secondtheo}}  By using a scaling argument as in Section \ref{Secmaintheo}, it suffices to prove Theorem \ref{secondtheo} in the case where the initial datum $u_0$ belongs to $ H^\infty(\R) \cap \mathcal{B}^s(\epsilon_0)$, where $  \mathcal{B}^s(\epsilon_0)$ is the ball of $H^s$ centered at the origin and of radius $\epsilon_0$.  Let $ u $ be the smooth solution emanating from $ u_0 $ .
Setting  $\Gamma^s_T(u)=\|u\|_{\widetilde{L^\infty_T}H^s_x}+\|u\|_{L^4_TL^{\infty}_x}$, it follows gathering \eqref{apriori.triline.1}, \eqref{apriori.se.1} and \eqref{apriori.ee.0} that
\begin{displaymath}
\Gamma^s_T(u) \lesssim \|u_0\|_{H^s}+\Gamma^s_T(u)^2+\Gamma^s_T(u)^{14} \, .
\end{displaymath}
 Observe that $\lim_{T \to 0} \Gamma^s_T(u)=c\|u_0\|_{H^s}$. Therefore, it follows by using a continuity argument that there exists $\epsilon_0>0$ such that
\begin{displaymath}
\Gamma^s_T(u) \lesssim \|u_0\|_{H^s} \quad \text{provided} \quad \|u_0\|_{H^s} \le \epsilon_0 \, .
\end{displaymath}
Moreover, \eqref{apriori.triline.1} ensures that
$$ \|u\|_{X^{s-1,1}_T} \lesssim  \|u_0\|_{H^s} \; . $$
Now, assume that  $ u_0\in H^s(\R) $ with $ \|u_0\|_{H^s}\le \varepsilon_0/2 $. We approximate $ u_0 $ by a sequence of smooth initial data $\{u_{0,n}\} \subset H^\infty(\R) $  such that $ \|u_{0,n} \|_{H^s} \le \varepsilon_0 $.
By passing to the limit on the sequence of emanating smooth solutions, the above  \textit{a priori} estimate ensures the existence of a solution of
 \eqref{mKdV}  for $ s>0$ in the sense of Definition \ref{def}. This solution  belongs to $ \widetilde{L^\infty_T}H^s_x\cap L^4_T L^\infty_x\cap X^{s-1,1}_T  \hookrightarrow L^3_{Tx}$.
  Note that, since $ s>0$,  there is no difficulty to pass to the limit on the nonlinear term by a compactness argument.
This concludes the proof of Theorem \ref{secondtheo} .

\medskip
\noindent \textbf{Acknowledgments.}
The authors are very grateful to the  anonymous Referee who  pointed out an error in a previous  version of this work and greatly improved the present version with numerous helpful suggestions and comments.
D.P. would like to thank the L.M.P.T. at Universit\'e Fran\c cois Rabelais for the kind hospitality during the elaboration of this work. He is also grateful to Gustavo Ponce for pointing out the reference \cite{ChHoTa}.

\bibliographystyle{amsplain}

\end{document}